\let\@@pmod\pmod
\DeclareRobustCommand{\pmod}{\@ifstar\@pmods\@@pmod}
\def\@pmods#1{\mkern4mu({\operator@font mod}\mkern 6mu#1)}
\newcommand{\R}{\mathbb{R}}
\newcommand{\C}{\mathbb{C}}
\newcommand{\Z}{\mathbb{Z}}
\newcommand{\Q}{\mathbb{Q}}
\newcommand{\cE}{\mathcal{E}}
\newcommand{\tE}{\widetilde{\mathcal{E}}}
\newcommand{\cS}{\mathcal{S}}
\DeclareMathOperator{\supp}{supp}
\DeclareFontFamily{OT1}{rsfs}{}
\DeclareFontShape{OT1}{rsfs}{n}{it}{<-> rsfs10}{}
\DeclareMathAlphabet{\mathscr}{OT1}{rsfs}{n}{it}
\DeclareMathOperator{\cond}{cond}
\DeclareMathOperator{\ord}{ord}
\DeclareMathOperator{\lcm}{lcm}
\newtheorem{proposition}{Proposition}[section]
\newtheorem{theorem}{Theorem}[section]
\newtheorem{lemma}{Lemma}[section]
\theoremstyle{remark}
\newtheorem*{remark}{Remark}
\numberwithin{equation}{section}
\title{Subconvexity for modular form $L$-functions \\ in the $t$ aspect}
\author{Andrew R. Booker}
\address{School of Mathematics, University of Bristol, Bristol, BS8 1TW, UK}
\email{andrew.booker@bristol.ac.uk}
\author{Micah B. Milinovich}
\address{Department of Mathematics, University of Mississippi, University, MS
38677 USA}
\email{mbmilino@olemiss.edu}
\author{Nathan Ng}
\address{Department of Mathematics and Computer Science, University of
Lethbridge, Lethbridge, AB Canada T1K 3M4}
\email{nathan.ng@uleth.ca}
\thanks{Research of the first author was supported by EPSRC Grant
\texttt{EP/K034383/1}.  Research of the second author was supported
by the NSA Young Investigator Grants \texttt{H98230-15-1-0231} and
\texttt{H98230-16-1-0311}.  Research of the third author was supported
by an NSERC Discovery Grant. No data were created in the course of this
study.}
\begin{document}
\maketitle

\begin{abstract}
Modifying a method of Jutila, we prove a $t$-aspect subconvexity estimate
for $L$-functions associated to primitive holomorphic cusp forms of
arbitrary level that is of comparable strength to Good's bound for the
full modular group, thus improving on a 36-year-old result.
A key innovation in our proof is a general form of Voronoi
summation that applies to all fractions, even when the level is not
squarefree.
\end{abstract}

\section{Introduction}
Let $f\in S_k(\Gamma_0(N),\xi)$
be a primitive holomorphic cusp form
of weight $k$, level $N$, and nebentypus character $\xi$.
(Here and throughout the paper, ``primitive'' means that $f$ is a
normalized Hecke eigenform in the new space.)
Let 
\[
f(z) = \sum_{n\ge 1} \lambda_f(n) \, n^{(k-1)/2} \, e^{2\pi i n z}
\quad\text{for }\Im(z)>0
\] 
be the normalized Fourier expansion of $f$ at the cusp $\infty$.
The $L$-function associated to $f$ is defined by
\[
L(s,f) = \sum_{n=1}^\infty \frac{\lambda_f(n)}{n^s} = \prod_{p \text{ prime}} \Big( 1- \frac{\lambda_f(p)}{p^s}+\frac{\xi(p)}{p^{2s}} \Big)^{-1}
\]
for $\Re(s)>1$, and by analytic continuation on the rest of $\C$. 

The analogue of the Lindel\"{o}f hypothesis for $L(s,f)$, in the $t$
aspect, is the conjecture that
\[
|L(\tfrac{1}{2}+it,f)| \ll_\varepsilon(1+|t|)^\varepsilon
\quad\text{for any }\varepsilon>0.
\]
A standard application of the Phragm\'en--Lindel\"of principle shows that
\[
|L(\tfrac{1}{2}+it,f)| \ll_\varepsilon(1+|t|)^{\frac12+\varepsilon}.
\]
This is called the convexity estimate for
$L(s,f)$ (in the $t$ aspect), and any improvement on the size of the
exponent on the right-hand side of the inequality is referred to as a
subconvexity estimate.

For $N=1$, Good \cite{Good} showed that
$$
|L(\tfrac{1}{2}+it,f)| \ll|t|^{\frac13} (\log|t|)^{\frac56}
\quad\text{for }|t|\ge2,
$$
using the spectral theory of the Laplacian for the modular group to
estimate so-called shifted convolution sums. Good's approach implicitly
relies on the fact that the Selberg eigenvalue conjecture holds for level
$1$ \cite[Theorem~11.4]{iwaniec}. To generalize it to arbitrary level,
one would have to consider the possibility of exceptional eigenvalues,
which could potentially lead to a weaker estimate. There are situations
where this numerical weakening can be circumvented; for instance,
Lau, Liu, and Ye \cite{LLY} showed, for a related problem, that the
contribution from exceptional eigenvalues can be controlled and causes
no harm to their final result. It is possible that a similar analysis
could be carried out in the present context.

In this paper, we instead consider a subsequent, more elementary,
approach developed by Jutila. Using only Voronoi summation, Farey fractions,
and estimates for exponential integrals, Jutila proved (again for $N=1$)
that
\[
|L(\tfrac{1}{2}+it,f)| \ll_\varepsilon|t|^{\frac13+\varepsilon} 
\quad\text{for }|t|\ge2.
\]
We generalize Huxley's treatment \cite{Huxley} of Jutila's
method and obtain a result for arbitrary level that is essentially as
strong as Good's:
\begin{theorem}\label{mainthm}
Let $f\in S_k(\Gamma_1(N))$ be a primitive cusp form.
Then
\begin{equation*}
|L(\tfrac{1}{2}+it,f)| \ll |t|^{\frac13} \log|t|
\quad\text{for }|t|\ge2,
\end{equation*}
with an implied constant that is polynomial in $k$ and $N$.
\end{theorem}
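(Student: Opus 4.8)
The plan is to adapt Jutila's approach as streamlined by Huxley, replacing the Voronoi summation step with a version that is uniform in the level $N$ and valid for all rationals $a/c$, including those with $\gcd(c,N)>1$. The starting point is the standard approximate functional equation, which expresses $L(\tfrac12+it,f)$ as a sum of roughly $|t|$ terms $\lambda_f(n)n^{-1/2-it}$ weighted by a smooth cutoff, plus a dual sum of the same shape coming from the functional equation. After a dyadic decomposition it suffices to bound, for $M \ll |t|^{1+\varepsilon}$, a sum of the form $S(M) = \sum_{n\sim M}\lambda_f(n)n^{-it}V(n/M)$ with $V$ smooth and compactly supported, and to show $S(M) \ll |t|^{1/3}M^{1/2}\log|t|$ (up to acceptable losses).

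**The key steps, in order, are the following.** First, partition the relevant range of $n$ (equivalently the frequencies $t/(2\pi n)$) using a Farey dissection of order $\asymp \sqrt{M}$ (or some power of $|t|$ chosen optimally at the end), so that each $n$ in the support is close to some rational $a/c$ with small denominator $c$. On each Farey arc one writes $n^{-it} = e^{-it\log n}$ and factors out the main oscillation $e(-na/c\cdot\text{something})$; this is where the general Voronoi summation formula enters, transforming $\sum_n \lambda_f(n) e(na/c) g(n)$ into a dual sum $\sum_m \lambda_f(m)(\text{Gauss-sum-like factor}) \check g(m)$, where the arithmetic factor and the dual frequency depend on $a$, $c$, and $N$ in a way that must be controlled polynomially in $N$. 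Second, estimate the resulting exponential integrals $\check g(m)$ by stationary phase, isolating the main term and bounding the tails; the stationary point contributes a saving and the off-diagonal dual terms are small. Third, sum the contributions over all Farey arcs: the number of arcs is $\asymp$ (order of dissection)$^2$, each arc has length controlled by $1/(c\,\cdot\text{order})$, and the Gauss sums contribute $\sqrt{c}$ on average, so one gets a bound that, optimized over the order of the Farey dissection, yields the exponent $\tfrac13$. Fourth, collect the $\log|t|$ from the dyadic decomposition over $O(\log|t|)$ scales $M$ (the power of $\log$ in the theorem reflects exactly this), and track the dependence on $k$ and $N$ through the conductor in the functional equation and through the Voronoi arithmetic factors to confirm it is polynomial.

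**The main obstacle** I expect is precisely the general Voronoi summation formula for $\sum \lambda_f(n)e(na/c)$ when $\gcd(c,N) > 1$ and $N$ is not squarefree: in that case $f$ need not be an eigenform of the Atkin–Lehner operators with a clean pseudo-eigenvalue, the local representation at primes dividing $\gcd(c,N)$ can be supercuspidal or a twist of Steinberg, and the naive approach (writing $a/c$ in terms of a cusp and using the Fricke involution) breaks down. The paper's abstract flags this as the key innovation, so I would prove a Voronoi formula valid for all fractions by working adelically or by a careful local analysis, expressing the dual sum in terms of $L$-function data of $f$ twisted by characters of modulus dividing $N$, and extracting the size of the arithmetic factors. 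A secondary technical point is keeping the exponential-integral estimates uniform: the phase is $t\log n$ plus lower-order terms from the Voronoi kernel (which for holomorphic forms is a Bessel $J_{k-1}$), and one must handle the transition ranges of the Bessel function and the dependence of all error terms on $k$, which is where the polynomial-in-$k$ claim is verified. Once the Voronoi step is in hand, the rest is a careful but essentially classical optimization following Huxley.
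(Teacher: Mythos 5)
Your outline reproduces the first half of the paper's architecture correctly: approximate functional equation and dyadic decomposition, Farey dissection so that $n^{-it}$ is well approximated by $e(\alpha n)$ with $\alpha$ of small denominator, a Voronoi formula valid for all fractions obtained by trading additive characters at primes dividing the level for multiplicative twists (this is exactly how the paper handles non-squarefree $N$, via the decomposition of additive twists into $L(s,f^\chi)$ and a good/bad splitting of each Farey fraction), and a stationary phase evaluation of the Bessel transforms. Up to that point you are on the paper's track, and your identification of the general Voronoi formula as the main new obstacle is accurate.

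However, your third step contains a genuine gap. After Voronoi and stationary phase, each arc indexed by $j$ (denominator $v_j\asymp V$) produces a short exponential sum $\sum_{\ell\lesssim M_0}\lambda(\ell)\,e\bigl(\overline{a_jr}\ell/q_j\bigr)h_j^{\pm}(\ell/r)e\bigl(g_j^{\pm}(\ell/r)\bigr)$, and if you simply add up the absolute values of these arc contributions (your ``sum the contributions over all Farey arcs \ldots Gauss sums contribute $\sqrt{c}$ on average'') you only recover essentially the convexity bound: a direct computation with $h_j^{\pm}\asymp M^{3/4}/(L^{1/4}\sqrt{Vt})$, $L\asymp M_0$, and $\asymp tV^2/M$ arcs of denominator $\asymp V$ gives a total of order $\sqrt{tM}$, i.e.\ $|L(\tfrac12+it,f)|\ll\sqrt{t}$. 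There is in fact no Gauss-sum saving to invoke here (the dual sum carries $e(-\overline{a_1r}\,\ell/q_1)$ with no $\sqrt{q}$ factor to average). The saving that produces the Weyl exponent $\tfrac13$ comes from cancellation \emph{across arcs}: one must apply Cauchy--Schwarz in $\ell$ (a large sieve inequality, Proposition~\ref{Large Sieve} in the paper), bound the off-diagonal terms by van der Corput first and second derivative tests applied to $g_{ir}^{\pm}-g_{jr}^{\pm}$, and then count the pairs of arcs whose data $\bigl(\overline{a_ir}/q_i,\,u_iv_i\bigr)$ nearly coincide --- the Bombieri--Iwaniec spacing problem, resolved by the Graham--Kolesnik bound (Lemma~\ref{magicmatrices}). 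This mean-value-over-arcs step is the analytic heart of Jutila's and Huxley's method and is entirely absent from your sketch; without it the optimization over the Farey order cannot yield $t^{1/3}$, and the $\log|t|$ (rather than $t^{\varepsilon}$) likewise depends on carrying $\delta=0$ coefficient bounds through this machinery, not only on the dyadic decomposition.
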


\begin{remark}
Combining the resolution of the Sato--Tate conjecture \cite{BLGHT}
with general estimates for sums of multiplicative functions due to Shiu
\cite{shiu}, we can marginally improve the inequality in Theorem \ref{mainthm} to
\begin{equation*}
|L(\tfrac{1}{2}+it,f)| \ll |t|^{\frac13} (\log|t|)^{\frac89+\frac8{27\pi}}.
\end{equation*}
However, the implied constant need no
longer be polynomial in $k$ and $N$; see Lemma~\ref{l:L1average}.
One could specify the dependence on $k$ and $N$ more precisely with
additional work, but it seems unlikely to be competitive with recent hybrid
subconvexity bounds for most ranges of the parameters.
\end{remark}

Munshi \cite{Munshi} has recently improved Good's bound for the full modular group, proving that
\[
|L(\tfrac{1}{2}+it,f)| \ll_\varepsilon |t|^{\frac{1}{3}-\frac{1}{1200}+\varepsilon}
\]
for any $\varepsilon>0$ when $N=1$. For $N>1$, prior to this paper, it was known that 
\[
|L(\tfrac{1}{2}+it,f)| \ll_\varepsilon |t|^{\frac{1}{2}-\delta+\varepsilon}
\]
for any $\varepsilon>0$ with $\delta = \frac{1-2\theta}{8}$ for any primitive $f\in S_k(\Gamma_1(N))$ by the work of Wu \cite{wu} and with $\delta=\frac{1-2\theta}{2(3-2\theta)}$ for $k\ge 4$ by the work of Kuan \cite{kuan}. Here $\theta$ is any exponent toward the Ramanujan-Petersson conjecture. Using $\theta=\frac{7}{64}$, we note that $\frac{1-2\theta}{8}=\frac{25}{256}$ and $\frac{1-2\theta}{2(3-2\theta)}=\frac{25}{178}$.

Our main theorem is an instance of what is commonly referred to as a Weyl-type subconvexity estimate
which, in the $t$ aspect, states that $|L(\frac{1}{2}+it)| \ll_\varepsilon
|t|^{\frac{m}{6}+\varepsilon}$ for an $L$-function, $L(s)$, of degree $m$. Classically such estimates are known for the Riemann
zeta-function and Dirichlet $L$-functions.  For degree
2, Good \cite{Good} and Meurman \cite{M} proved
results of this strength for the $L$-functions associated to holomorphic modular
forms and Maass forms on the full modular group. Theorem \ref{mainthm} extends Good's work to arbitrary level (while
the analogous extension for Maass forms remains an open problem). For
primitive $L$-functions of higher degree, obtaining a subconvexity
estimate in the $t$ aspect of Weyl strength remains elusive. Recently Blomer and Mili\'cevi\'c \cite{BM} have
developed a $p$-adic analogue of Jutila's argument to prove a subconvexity
estimate for $L(s,f\times\chi)$ in the character aspect, for a level $1$
form $f$. In an earlier version of this paper, we predicted that our approach could be combined with theirs to prove an analogous  result for general level. Indeed, a result of this type has recently been established by Assing \cite{Assing}.

Our main motivation for establishing Theorem \ref{mainthm} is its use in some applications involving estimates for zeros of $L$-functions. In \cite{BMN}, generalizing a method of Conrey and Ghosh \cite{CG}, we use Theorem \ref{mainthm} to prove quantitative estimates for simple zeros of modular form $L$-functions of arbitrary conductor. Using similar ideas, we can also prove estimates for the number of distinct zeros of $L$-functions. This work is
currently in preparation.

We conclude the introduction with a brief sketch of
the proof. Using an approximate functional equation
for $L(\tfrac12+it,f)$
(Lemma~\ref{Harcos}), we reduce the problem to estimating sums of the
form $\sum_{n=M_1}^{M_2} \lambda_f(n) \, n^{-it}$,
where $M_1\le M_2 \le 2 M_1$. Next, following Jutila, we break the
interval $[M_1,M_2]$ into subintervals on which $n^{-it}$ is well
approximated by additive characters $ce^{2\pi i\alpha n}$,
where $\alpha\in\Q$ has small denominator.
A key novelty in our proof is a generalization of the Voronoi summation
formula (Lemma~\ref{Voronoi}), which applies to all fractions $\alpha$.
Together with a delicate stationary phase analysis
(Proposition~\ref{NathanProp}), we thus transform the additive character
sums into exponential sums that are more complicated but shorter than
those at the start.  Finally, we derive a general large sieve inequality
(Proposition~\ref{Large Sieve}) to convert the problem into a certain
counting problem for Farey fractions (Lemma~\ref{magicmatrices}) that
was solved by Bombieri and Iwaniec.

The outline of the paper is as follows. After some preliminaries
on modular forms in Section~\ref{s:modular}, we prove
Theorem~\ref{mainthm} in broad strokes following the sketch above in
Section~\ref{s:proof}. We defer the most technical parts of the paper, namely the
proofs of Propositions~\ref{NathanProp} and \ref{Large Sieve}, until
Sections~\ref{s:statphase} and \ref{s:largesieve}, respectively.

\section{Modular forms}\label{s:modular}
In this section, we establish some basic properties of modular forms and their $L$-functions
that are
needed in the proof of Theorem~\ref{mainthm}. Throughout this section
we take $f\in S_k(\Gamma_0(N),\xi)$ to be a primitive cusp form with
Fourier coefficients $\lambda_f(n)$,
$\bar{f}\in S_k(\Gamma_0(N),\overline{\xi})$ the dual form
with Fourier coefficients
$\lambda_{\bar{f}}(n)=\overline{\lambda_f(n)}$,
and $\epsilon_f$ the root number of $L(s,f)$, satisfying
$$
\Lambda(s,f)=\epsilon_fN^{\frac12-s}\Lambda(1-s,\bar{f}),
$$
where $\Lambda(s,f)=\Gamma_\C(s+\frac{k-1}2)L(s,f)$ and $\Gamma_{\mathbb{C}}(s) = 2(2\pi)^{-s}\Gamma(s)$.

The first property that we need is a form of `approximate functional
equation' for $L(s,f)$:
\begin{lemma}\label{Harcos}
Let $g:(0,\infty)\to\R$ be a smooth function with functional
equation $g(x)+g(1/x)=1$ and derivatives decaying faster than any negative
power of $x$ as $x\to \infty$. Then
\begin{equation*}
\begin{split}
L(\tfrac{1}{2}+it,f) = \sum_{n=1}^\infty
&\frac{\lambda_f(n)}{n^{\frac12+it}} \, g\!\left(\frac{n}{\sqrt{C}}
\right) + \epsilon_f
\frac{\Gamma_\C(\tfrac{k}{2}\!-\!it)}{\Gamma_\C(\tfrac{k}{2}\!+\!it)}
\sum_{n=1}^\infty \frac{\lambda_{\bar{f}}(n)}{n^{\frac12-it}} \, g\!\left(\frac{ n}{\sqrt{C}} \right) + O_{\varepsilon,g}\!\left( N^{\frac12}C^{-\frac14+\varepsilon} \right)
\end{split}
\end{equation*}
for any $\varepsilon>0$, where $C=C(f,t)$ is the analytic conductor,
defined by
\begin{equation}\label{conductor}
C:=\frac{N}{\pi^2} \Big|\frac{k\!+\!1}{2}+it\Big|
\Big|\frac{k\!+\!3}{2}+it\Big|.
\end{equation}
\end{lemma}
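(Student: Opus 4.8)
The plan is to prove Lemma~\ref{Harcos} by the standard Mellin--Barnes contour argument, the only wrinkle being that $g$ is a fixed weight rather than a conductor‑adapted one, so the resulting identity is only approximate. Write $\widehat g(s)=\int_0^\infty g(x)x^{s-1}\,dx$ for the Mellin transform of $g$. (For the series in the statement to converge at all, $g$ must decay at $\infty$; I use this, so that the integral converges for $\Re s>0$.) Splitting the range at $x=1$, substituting $x\mapsto1/x$ in $\int_0^1$, and using $g(x)+g(1/x)=1$ yields
\[
\widehat g(s)=\frac1s+M(s)-M(-s),\qquad M(s):=\int_1^\infty g(x)x^{s-1}\,dx,
\]
with $M$ entire; hence $\widehat g$ continues to a meromorphic function on $\C$ whose only singularity is a simple pole at $s=0$ with residue $1$, and $\widehat g(-s)=-\widehat g(s)$. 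Integrating by parts repeatedly in $M$, using the decay of the derivatives of $g$, shows $\widehat g$ decays faster than any polynomial in vertical strips.

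First I would fix $\sigma=\tfrac12+\varepsilon$ and, using Deligne's bound $\lambda_f(n)\ll_\varepsilon n^\varepsilon$ to justify the interchange of summation and integration, apply Mellin inversion to rewrite the first sum in the lemma as $\frac1{2\pi i}\int_{(\sigma)}\widehat g(s)\,C^{s/2}L(\tfrac12+it+s,f)\,ds$. Shifting the contour to $\Re s=-\sigma$ is legitimate since $L(\cdot,f)$ is entire, $\widehat g$ decays on the horizontal segments, and the only pole crossed is the simple pole of $\widehat g$ at $s=0$, whose residue is exactly $L(\tfrac12+it,f)$; so the first sum equals $L(\tfrac12+it,f)+\mathcal J$ with $\mathcal J=\frac1{2\pi i}\int_{(-\sigma)}\widehat g(s)C^{s/2}L(\tfrac12+it+s,f)\,ds$.

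On $\Re s=-\sigma$ one has $\Re(\tfrac12+it+s)<0$, so I would invoke the functional equation in the form $L(\tfrac12+it+s,f)=\epsilon_fN^{-it-s}\frac{\Gamma_\C(\frac k2-it-s)}{\Gamma_\C(\frac k2+it+s)}L(\tfrac12-it-s,\bar f)$, substitute $s\mapsto-s$ to return to $\Re s=\sigma$, and use $\widehat g(-s)=-\widehat g(s)$. This presents $-\mathcal J$ as a unimodular constant (namely $\epsilon_f\,\Gamma_\C(\tfrac k2-it)/\Gamma_\C(\tfrac k2+it)$, the factor relating $L(\tfrac12+it,f)$ to $L(\tfrac12-it,\bar f)$ in the functional equation) times $\frac1{2\pi i}\int_{(\sigma)}\widehat g(s)\,C^{s/2}\,\mathcal R(s)\,L(\tfrac12-it+s,\bar f)\,ds$, where $\mathcal R(s)$ is a product of $\Gamma_\C$‑ratios and powers of $N$ and $C$. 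By Stirling's formula, $\Gamma_\C(w+s)/\Gamma_\C(w)=(w/2\pi)^{s}\bigl(1+O\bigl((1+|s|^2)/|w|\bigr)\bigr)$ uniformly for $\Re w\ge\tfrac12$, and the definition \eqref{conductor} of $C$ is calibrated precisely so that the powers of $\tfrac k2\pm it$ thus produced cancel against the power of $N/C$, leaving $\mathcal R(s)=1+O\bigl((1+|s|^2)/|\tfrac k2+it|\bigr)$ on the line. Taking $\mathcal R(s)=1$ reconstructs, again by Mellin inversion, the dual sum in the lemma multiplied by that unimodular factor; the contribution of $\mathcal R(s)-1$ is the error. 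On $\Re s=\tfrac12+\varepsilon$ we bound $|C^{s/2}|=C^{1/4+\varepsilon/2}$, $|L(\tfrac12-it+s,\bar f)|\le\zeta(1+\varepsilon)^2\ll_\varepsilon1$ (the Euler product, using $|\alpha_p|,|\beta_p|\le1$ for the Satake parameters), and $\int|\widehat g(s)|(1+|s|^2)\,|ds|\ll_g1$, so this contributes $\ll_{g,\varepsilon}C^{1/4+\varepsilon}/|\tfrac k2+it|\asymp N^{1/2}C^{-1/4+\varepsilon}$, using $|\tfrac k2+it|\asymp\sqrt{C/N}$. Collecting the pieces gives the stated identity.

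The step I expect to be the main obstacle is making the Stirling comparison completely uniform in $k$ and $N$, since the implied constant in Theorem~\ref{mainthm} is required to be polynomial in these: one needs $\Gamma(w+s)/\Gamma(w)=w^s\bigl(1+O((1+|s|^2)/|w|)\bigr)$ uniformly for $\Re w\ge\tfrac12$, $|w|\ge1$ (so as to cover small weight $k$ alongside large $|t|$), together with the short but slightly delicate verification that the normalisation of $C$ in \eqref{conductor} indeed produces the cancellation above to within the permitted error. Everything else --- the contour shifts, the residue calculation, and the convergence estimates --- is routine.
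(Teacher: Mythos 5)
The paper never proves this lemma directly: it is imported wholesale as a special case of Harcos's uniform approximate functional equation \cite[Theorem 2.5]{Harcos}. Your Mellin--Barnes argument is therefore a genuinely different, self-contained route, and its skeleton is the standard one and is sound: the properties of $\widehat g$ (oddness, simple pole at $0$ with residue $1$, rapid decay in vertical strips), the contour shift extracting $L(\tfrac12+it,f)$, the functional equation followed by $s\mapsto -s$, and a uniform Stirling comparison are exactly the right ingredients, and the final error bookkeeping has the right shape.

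The genuine problem is at the step you yourself flag as delicate: the asserted calibration of \eqref{conductor} does not hold, so the error estimate does not close as written. Running Stirling with the paper's normalizations, $\Gamma_\C(\tfrac k2-it+s)/\Gamma_\C(\tfrac k2-it)\approx\bigl((\tfrac k2-it)/2\pi\bigr)^s$ and $\Gamma_\C(\tfrac k2+it)/\Gamma_\C(\tfrac k2+it-s)\approx\bigl((\tfrac k2+it)/2\pi\bigr)^s$, so your $\mathcal R(s)$ comes out as $N^{-it}\bigl(N(\tfrac{k^2}4+t^2)/(4\pi^2C)\bigr)^s$ up to a genuinely small Stirling correction. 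The balanced conductor is thus $\tfrac{N}{4\pi^2}\bigl|\tfrac k2+it\bigr|\,\bigl|\tfrac k2+1+it\bigr|$, which is asymptotically $C/4$ for the $C$ of \eqref{conductor}; hence on $\Re s=\tfrac12+\varepsilon$ one has $|\mathcal R(s)|\approx\tfrac12$ rather than $1+O\bigl((1+|s|^2)/|\tfrac k2+it|\bigr)$, and the ``contribution of $\mathcal R-1$'' is not an error term at all --- it is the difference between the dual sum at scale $\sqrt C$ and at scale $\asymp\sqrt C/4$, a main-term-sized oscillating sum. There is also the unimodular factor $N^{-it}$ from $N^{1/2-s}$ at $s=\tfrac12+it$, which your stated prefactor drops. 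The repair is either to keep the factor $\bigl(N(\tfrac{k^2}4+t^2)/(4\pi^2C)\bigr)^s$ inside the Mellin inversion, which simply rescales the dual weight to $g$ evaluated at $\asymp 4n/\sqrt C$ and yields the prefactor $\epsilon_fN^{-it}\Gamma_\C(\tfrac k2-it)/\Gamma_\C(\tfrac k2+it)$ --- a variant that suffices for every later use in the paper, since only $|g|\le1$, monotonicity, compact support, and the modulus of the prefactor are ever invoked --- or to work with the balanced analytic conductor (i.e.\ Harcos's normalization, which is what the paper's citation implicitly supplies). As written, though, the claim that \eqref{conductor} is ``calibrated precisely'' is the step that fails when one checks the constants, and since the entire content of your final paragraph is the uniformity and precision of exactly this comparison, it needs to be redone.
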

\begin{proof}
This is a special case of a result of Harcos \cite[Theorem 2.5]{Harcos}.
\end{proof}

Next, we need upper estimates for $|\lambda_f(n)|$ on average:
\begin{lemma}\label{l:L1average}
Let $\delta\in\{0,1-\frac{8}{3\pi}\}$, $\alpha\ge0$, $x\ge\frac32$, and
$h\ge1$. Then
\begin{itemize}
\item[(i)]
$\displaystyle \sum_{x<n\le x+h}|\lambda_f(n)|^2\ll_{k,N}
\max(h,x^{\frac35}) \, ;$
\item[(ii)]
$\displaystyle \sum_{x<n\le x+h}|\lambda_f(n)|\ll_{\delta,k,N}
\max(h,x^{\frac35})(\log{x})^{-\delta} \, ;$
\item[(iii)] for $\alpha>1$,
$\displaystyle \sum_{n>x}|\lambda_f(n)| \, n^{-\alpha}\ll_{\delta,\alpha,k,N}
x^{1-\alpha}(\log{x})^{-\delta} \, ;$
\item[(iv)] for $\alpha<1$,
$\displaystyle \sum_{n\le x}|\lambda_f(n)| \, n^{-\alpha}\ll_{\delta,\alpha,k,N}
x^{1-\alpha}(\log{x})^{-\delta}$.
\end{itemize}
Moreover, when $\delta=0$, the implied constants are polynomial
in $k$ and $N$.
\end{lemma}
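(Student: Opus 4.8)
The plan is to bootstrap from the second moment (i) to the first moment (ii) by Cauchy--Schwarz, and then to deduce the tail/partial-sum estimates (iii) and (iv) by partial summation from (ii). The key input for (i) is the Rankin--Selberg $L$-function $L(s,f\times\bar f)$, which has a simple pole at $s=1$ and whose Dirichlet coefficients dominate $|\lambda_f(n)|^2$ (up to the usual factor $\sum_{d^2\mid n}$, which is negligible on average). A contour shift for the Rankin--Selberg $L$-function, with a smooth weight, yields $\sum_{n\le x}|\lambda_f(n)|^2 \ll_{k,N} x$ with a polynomial-in-$(k,N)$ constant; the short-interval version with the $\max(h,x^{3/5})$ truncation comes from splitting $[x,x+h]$ according to whether $h$ exceeds the error term in the smoothed count, which one can take of size $x^{3/5}$ after optimizing the smoothing parameter against the convexity bound for $L(s,f\times\bar f)$ on the critical line. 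The polynomiality of the implied constant follows by keeping track of the conductor of $f\times\bar f$, which is polynomial in $N$, and the archimedean factors, which are polynomial in $k$.

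For (ii), I would apply Cauchy--Schwarz in the form
\[
\sum_{x<n\le x+h}|\lambda_f(n)|
\le \Bigl(\sum_{x<n\le x+h}|\lambda_f(n)|^2 w(n)\Bigr)^{1/2}
\Bigl(\sum_{x<n\le x+h}w(n)^{-1}\Bigr)^{1/2}
\]
with a suitable multiplicative weight $w$. The naive choice $w\equiv1$ only gives $\max(h,x^{3/5})(\log x)^{1/2}$-type losses in the wrong direction; to get the \emph{saving} $(\log x)^{-\delta}$ with $\delta=1-\frac{8}{3\pi}$ one must exploit that $|\lambda_f(p)|$ is on average smaller than $1$. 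Here is where the Sato--Tate distribution enters: by \cite{BLGHT} the normalized coefficients $\lambda_f(p)$ equidistribute with respect to the Sato--Tate measure, so $\frac1{\pi(x)}\sum_{p\le x}|\lambda_f(p)| \to \frac{8}{3\pi} < 1$. Feeding this into Shiu's theorem \cite{shiu} on sums of nonnegative multiplicative functions in short intervals --- applied to the multiplicative function $n\mapsto|\lambda_f(n)|$, whose values at primes have mean $\frac{8}{3\pi}$ --- produces exactly $\sum_{x<n\le x+h}|\lambda_f(n)|\ll \max(h,x^{3/5})(\log x)^{\frac{8}{3\pi}-1}$, i.e.\ the claimed bound with $\delta = 1-\frac{8}{3\pi}$. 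Taking instead the trivial bound $|\lambda_f(p)|\le$ (average) $\le 1$ via (i) and Cauchy--Schwarz gives the $\delta=0$ case with polynomial dependence on $k,N$ (since then no appeal to Sato--Tate, and hence no ineffective constant, is needed).

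Finally, (iii) and (iv) follow from (ii) by partial summation: writing $S(x)=\sum_{n\le x}|\lambda_f(n)|$, dyadic decomposition together with (ii) applied on each dyadic block (taking $h$ comparable to $x$, so that $\max(h,x^{3/5})=h\asymp x$) gives $S(x)\ll_{\delta,k,N} x(\log x)^{-\delta}$, and then
\[
\sum_{n>x}|\lambda_f(n)|\,n^{-\alpha}
= \alpha\int_x^\infty \frac{S(u)}{u^{\alpha+1}}\,du + O\!\left(\frac{S(x)}{x^\alpha}\right)
\ll_{\delta,\alpha,k,N} x^{1-\alpha}(\log x)^{-\delta}
\]
for $\alpha>1$, with the analogous computation on $[1,x]$ for $\alpha<1$ (the constant blows up as $\alpha\to1$, accounting for the dependence on $\alpha$). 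I expect the main obstacle to be the careful bookkeeping in part (i): one must obtain the short-interval bound with the specific exponent $\frac35$ and, simultaneously, a fully explicit polynomial dependence on $k$ and $N$, which requires tracking the conductor and gamma factors of $L(s,f\times\bar f)$ through the contour shift rather than citing a black-box Rankin--Selberg estimate. The Sato--Tate/Shiu step in (ii), by contrast, is essentially a direct citation once the mean value $\frac{8}{3\pi}$ of $|\lambda_f(p)|$ is identified.
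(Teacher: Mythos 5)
Your proposal is correct and follows essentially the same route as the paper: part (i) is Rankin's asymptotic $\sum_{n\le x}|\lambda_f(n)|^2=A_fx+O_{k,N}(x^{3/5})$ (which the paper simply cites rather than re-derives via a contour shift), part (ii) is Cauchy--Schwarz with (i) for $\delta=0$ and Sato--Tate combined with Shiu's theorem for $\delta=1-\frac{8}{3\pi}$, and parts (iii)--(iv) are partial summation from (ii). The only differences are cosmetic: the ``main obstacle'' you anticipate in (i) is dispatched by citation to Rankin, and your weighted Cauchy--Schwarz detour in (ii) is ultimately abandoned in favor of exactly the direct Sato--Tate/Shiu argument the paper uses.
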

\begin{proof}
In his work introducing the Rankin--Selberg method,
Rankin \cite{rankin} proved the estimate
$$
\sum_{n\le x}|\lambda_f(n)|^2=A_fx+O_{k,N}(x^{\frac35}),
$$
for a certain explicit $A_f>0$. One can see that both
$A_f$ and the implied constant above grow at most polynomially in $k$
and $N$, and (i) follows.

As for (ii), when $\delta=0$, Cauchy's inequality and (i) imply that
$$
\sum_{x<n\le x+h}|\lambda_f(n)|
\ll_{k,N}\sqrt{h\max(h,x^{\frac35})}
\le\max(h,x^{\frac35}),
$$
again with a polynomial implied constant.
For $\delta=1-\frac{8}{3\pi}$, it follows from the resolution of the
Sato--Tate conjecture \cite{BLGHT} that\footnote{Equality holds in this
estimate when $k>1$ and $f$ does not have CM.
The remaining cases must be handled separately, but are easier to prove
and lead to slightly improved estimates.
Specifically, $8/(3\pi)$ can be replaced by $2/\pi$ for CM forms, at most
$2/3$ for dihedral forms, $5/6$ for tetrahedral forms,
$(5+3\sqrt2)/12$ for octahedral forms, and $(11+6\sqrt5)/30$ for
icosahedral forms.}
$$
\sum_{p\le x}\frac{|\lambda_f(p)|}{p}
\le(1-\delta+o_{k,N}(1))\log\log{x}
\quad\text{as }x\to\infty.
$$
Inserting this into Shiu's estimate \cite[Theorem 1]{shiu},
for any fixed $\varepsilon>0$ we derive that
$$
\sum_{x<n\le x+h}|\lambda_f(n)|\ll_{\varepsilon,k,N} h(\log{x})^{-\delta}
\quad\text{uniformly for }h\ge x^\varepsilon,
$$
which is clearly sufficient for (ii).

Note that (ii) implies
$\sum_{n \le x} |\lambda_f(n)|\ll_{\delta,k,N} x(\log{x})^{-\delta}$.
Using this, a simple exercise in partial summation implies (iii) and (iv). 
\end{proof}

Finally, we require a form of Voronoi/Wilton summation.
As this name is usually understood,
such a formula exists for every fraction $\alpha\in\Q$
only when the level is squarefree.
Since we do not want to impose such a restriction on $f$ in our
hypotheses, we prove a generalization, the basic idea
of which is to replace additive characters by multiplicative characters
at finitely many bad primes.
To this end, for any Dirichlet character $\chi\pmod*{q}$, let
$f^\chi$ denote the unique primitive cusp form
whose Fourier coefficients $\lambda_{f^\chi}(n)$ satisfy
$\lambda_{f^\chi}(n)=\lambda_f(n)\chi(n)$ for all $n$ coprime to $q$;
this is guaranteed to exist by \cite[Theorem~3.2]{AL} and has level
dividing $Nq^2$.

\begin{lemma}
\label{additivetwist}
Let $\alpha=a/q\in\Q$, and define $q^*=\prod_{p\mid q}p^{1+\ord_p{q}}$.
Then
$$
\sum_{n=1}^\infty\frac{\lambda_f(n)e(\alpha n)}{n^s}
=\sum_{\chi\pmod*{q}}
\sum_{m\big|\bigl(\frac{\lcm(Nq,q^2)}{\cond(f^\chi)},q^*\bigr)}
\frac{C(f,\alpha,m,\chi)}{m^s}L(s,f^\chi),
$$
for some numbers $C(f,\alpha,m,\chi)\in\C$ satisfying
$C(f,\alpha,m,\chi)\ll_q 1$.
\end{lemma}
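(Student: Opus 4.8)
The plan is to verify the identity as an equality of Dirichlet series for $\Re(s)>1$, where both sides converge absolutely. By replacing $\alpha$ with its reduced representative (the relevant moduli and conductors behaving compatibly under this, the conductors of twists dividing the appropriate $\lcm$'s), we may assume $\gcd(a,q)=1$. Now stratify the sum over $n$ by $g=\gcd(n,q)$: writing $n=gn'$ with $\gcd(n',q/g)=1$ one has $e(\alpha n)=e(an'/(q/g))$ with $\gcd(an',q/g)=1$, so the Gauss-sum identity $e(b/r)=\frac1{\varphi(r)}\sum_{\psi\bmod r}\tau(\psi)\bar\psi(b)$ — valid for $\gcd(b,r)=1$, with $\psi$ ranging over all characters mod $r$, primitive or not — gives
\[
\sum_{n=1}^\infty\frac{\lambda_f(n)e(\alpha n)}{n^s}
=\sum_{g\mid q}\frac1{g^s\varphi(q/g)}\sum_{\psi\bmod q/g}\tau(\psi)\,\bar\psi(a)\sum_{\substack{n'\ge1\\ \gcd(n',q/g)=1}}\frac{\lambda_f(gn')\,\bar\psi(n')}{(n')^s}.
\]

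Next, evaluate the inner sum by multiplicativity of $\lambda_f$. Write $g=g_1g_2$, with $g_1$ collecting the primes $p$ having $\ord_p g<\ord_p q$ and $g_2$ those with $\ord_p g=\ord_p q$; then $\gcd(g_1,n')=1$, and factoring $n'=n_2n_3$ with $n_2$ supported on the primes of $g_2$ and $\gcd(n_3,q)=1$, the inner sum equals
\[
\lambda_f(g_1)\Bigl(\prod_{p\mid g_2}\ \sum_{j\ge0}\lambda_f\bigl(p^{\ord_p q+j}\bigr)\bar\psi(p)^j p^{-js}\Bigr)\Bigl(\sum_{\substack{n_3\ge1\\ \gcd(n_3,q)=1}}\frac{\lambda_{f^{\bar\psi}}(n_3)}{n_3^s}\Bigr),
\]
using $\lambda_f(n)\bar\psi(n)=\lambda_{f^{\bar\psi}}(n)$ for $\gcd(n,q/g)=1$. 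The rightmost factor is $L(s,f^{\bar\psi})$ with its Euler factors at the primes dividing $q$ removed; since $\bar\psi$ is unramified at every $p\mid g_2$, the Euler factor of $f^{\bar\psi}$ at such a $p$ is an explicit reciprocal polynomial in $p^{-s}$ which combines with the (a priori merely rational) local factor in the middle product to leave a polynomial. Hence the inner sum is $L(s,f^{\bar\psi})$ times a finite Dirichlet polynomial supported on the primes of $q$. Re-indexing each $\bar\psi\bmod q/g$ by the character $\chi\bmod q$ it induces — so that $f^{\bar\psi}=f^\chi$ by strong multiplicity one — and collecting terms puts the right-hand side in the claimed shape, with $C(f,\alpha,m,\chi)$ read off from the construction.

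It remains to check the two properties of the coefficients. The bound $C(f,\alpha,m,\chi)\ll_q1$ is immediate: each coefficient is a sum of boundedly-many (in terms of $q$) products of factors — a Gauss sum, a root of unity, a Fourier coefficient $\lambda_f(p^j)$, or a coefficient of a local Euler factor — each of which is $\ll_q1$. For the support condition, $m$ is visibly divisible only by primes of $q$. Fix such a prime $p$, put $e_p=\ord_p q$, and consider the stratum with $\ord_p g=j$. If $j<e_p$ then $n'$ has no $p$-part, so the only sources of $p$ in $m$ are $g_1^{-s}$ (contributing $p^j$) and the stripped Euler factor of $f^{\bar\psi}$ at $p$ (of degree $\max(0,2-\ord_p\cond(f^\chi))$ in $p^{-s}$), whence $\ord_p m\le j+\max(0,2-\ord_p\cond(f^\chi))$ in this stratum. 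If $j=e_p$ then $g_2^{-s}$ contributes $p^{e_p}$, multiplied by the residual polynomial above, whose degree is again controlled by $\ord_p\cond(f^\chi)$ and which vanishes when $\ord_p N\ge2$. Using that $L_p(\pi,s)$ is the reciprocal of a polynomial in $p^{-s}$ of degree $\max(0,2-\ord_p\cond\pi)$, together with the stability of local conductors under twisting — $f^\chi$ differs from $f$ at $p$ by a twist of conductor dividing $p^{\,e_p-j}$ — a short case analysis yields $\ord_p m\le\min\bigl(e_p+1,\ \ord_p\lcm(Nq,q^2)-\ord_p\cond(f^\chi)\bigr)$ in every stratum, which is exactly $\ord_p\bigl(\lcm(Nq,q^2)/\cond(f^\chi),\,q^*\bigr)$.

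The main obstacle is this last case analysis. The Gauss-sum expansion, the multiplicative factorization, and the cancellation of the local factors at the primes of $g_2$ are routine; the real work lies in pinning down the precise $p$-adic support of $m$ against the denominator $\cond(f^\chi)$, which forces a division into cases according to whether $p\mid N$ and the value of $\ord_p N$, the size of $\ord_p g$, and the ramification of $\psi$ at $p$, each appealing to the classification of local $L$-factor degrees and to the behaviour of conductors under twists that ramify at $p$.
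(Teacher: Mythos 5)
Your derivation of the identity itself is sound, and it is in fact a more direct route than the paper's (which treats prime-power denominators and then assembles the general case by partial fractions and induction): the stratification by $\gcd(n,q)$, the Gauss-sum expansion over all (possibly imprimitive) characters mod $q/g$, and the cancellation of the rational local factor against the Euler polynomial of $f^{\bar\psi}$ at the primes $p\mid g_2$ (where $\bar\psi$ is unramified, so that Euler polynomial is $E_{f,p}(\bar\psi(p)p^{-s})$) are all correct, as is the bound $C(f,\alpha,m,\chi)\ll_q1$. The genuine gap is in the support condition on $m$, which is the real content of the lemma (it is what makes $r=O_N(1)$ in Lemma~\ref{Voronoi}), and two of the facts you invoke to dispatch it are false. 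First, the degree of the local $L$-factor of a degree-two local representation $\pi$ is not $\max(0,2-\ord_p\cond\pi)$: a ramified principal series $\pi(\chi_1,\chi_2)$ with $\chi_1$ of arbitrarily large conductor and $\chi_2$ unramified has conductor exponent $\ge2$ but an $L$-factor of degree $1$. Second, in the stratum $\ord_p g=e_p$ the residual polynomial does not vanish when $\ord_p N\ge2$: there are newforms with $p^2\mid N$ and $\lambda_f(p)\neq0$ (exactly the ramified principal series with an unramified constituent), and then that polynomial is a nonzero constant; the required bound still holds in this stratum, but not for the reason you give.

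More seriously, in the stratum $\ord_p g=j<e_p$ the case your degree formula pretends cannot occur --- $\deg E_{f^\chi,p}=1$ with $\ord_p\cond(f^\chi)$ large --- is precisely the crux, and there the general Atkin--Li bound $\ord_p\cond(f^\chi)\le(e_p-j)+\max(e_p-j,\ord_p N)$ is not enough: combined with $\deg E_{f^\chi,p}=1$ it gives $j+\deg E_{f^\chi,p}+\ord_p\cond(f^\chi)\le e_p+1+\max(e_p,\ord_p N)$, overshooting the target $e_p+\max(e_p,\ord_p N)$ by one power of $p$ (e.g.\ when $j=0$). One must argue by local type, as the paper does for \eqref{cbound2}: the bound is immediate when $\deg E_{f^\chi,p}=0$; if $\pi_p$ is special then $\deg E_{f^\chi,p}=1$ forces $\ord_p\cond(f^\chi)=1$; and if $\pi_p=\pi(\chi_1,\chi_2)$ is principal series with $\deg E_{f^\chi,p}=1$, one may take $\chi\chi_1$ unramified, whence $\ord_p\cond(f^\chi)=\ord_p\cond(\chi\chi_2)\le\ord_p\cond(\chi_1)+\ord_p\cond(\chi_2)=\ord_p N$. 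This trichotomy is exactly the ``short case analysis'' you defer, and as written your sketch would not produce it, since it rests on the incorrect degree formula; you acknowledge it as the main obstacle but do not carry it out, so the divisibility claim --- and with it the lemma --- is not established by the proposal.
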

\begin{proof}
Let us first assume that $q=p^e$ is a prime power and $r$ is a
positive integer coprime to $p$. Then the additive twist of
$r^{-s}L(s,f)$ by $\alpha=a/q$ equals
\begin{align*}
&\sum_{n=1}^\infty\frac{\lambda_f(n)}{(rn)^s}e\!\left(\frac{arn}{p^e}\right)
=\sum_{k=0}^{e-1}\frac{\lambda_f(p^k)}{(rp^k)^s}
\sum_{(n,p)=1}\frac{\lambda_f(n)}{n^s}e\!\left(\frac{arn}{p^{e-k}}\right)
+\sum_{k=e}^\infty\frac{\lambda_f(p^k)}{(rp^k)^s}
\sum_{(n,p)=1}\frac{\lambda_f(n)}{n^s}\\
&=\sum_{k=0}^{e-1}\frac{\lambda_f(p^k)}{(rp^k)^s}\sum_{\chi\pmod*{p^{e-k}}}
\frac{\tau(\overline{\chi})\chi(ar)}{\varphi(p^{e-k})}
\sum_{(n,p)=1}\frac{\lambda_f(n)\chi(n)}{n^s}
+\sum_{k=e}^\infty\frac{\lambda_f(p^k)}{(rp^k)^s}
\sum_{(n,p)=1}\frac{\lambda_f(n)}{n^s}\\
&=\sum_{k=0}^{e-1}\frac{\lambda_f(p^k)}{(rp^k)^s}\sum_{\chi\pmod*{p^{e-k}}}
\frac{\tau(\overline{\chi})\chi(ar)}{\varphi(p^{e-k})}
E_{f^\chi,p}(p^{-s})L(s,f^\chi)
\\
&\qquad \qquad+\left(r^{-s}-E_{f,p}(p^{-s})\sum_{k=0}^{e-1}\frac{\lambda_f(p^k)}{(rp^k)^s}\right)
L(s,f),
\end{align*}
where $E_{f,p}$ and $E_{f^\chi,p}$ denote the Euler factor polynomials
of $f$ and $f^\chi$ at $p$, respectively.
Note that this is a linear combination of terms of the form
$(rp^j)^{-s}L(s,f^\chi)$ for characters $\chi\pmod*{p^e}$.

In the general case, by partial fractions, we may express
$\alpha$ as a sum of fractions of the form $a/p^e$, and
applying the prime power case inductively yields a
decomposition of the required type. The estimate
$C(f,\alpha,m,\chi)\ll_q 1$ follows from the
fact that the coefficients
in the above polynomials depend only on local data of $f^\chi$,
together with Deligne's bound.
It remains only to be seen
that the values of $m$ that occur must divide
$\bigl(\frac{\lcm(Nq,q^2)}{\cond(f^\chi)},q^*\bigr)$, which reduces to the
following two assertions in the prime power case:
\begin{equation}
\label{cbound1}
k+\deg{E_{f^\chi,p}}\le e+1
\end{equation}
and
\begin{equation}
\label{cbound2}
k+\deg{E_{f^\chi,p}}+\ord_p\cond(f^\chi)
\le e+\max(e,\ord_p{N}).
\end{equation}

Since $k\le e-1$ and $\deg{E_{f^\chi,p}}\le 2$, the assertion in \eqref{cbound1} is clear.
As for \eqref{cbound2},
since $\chi$ is a character mod $p^{e-k}$, it follows from
\cite[Theorem~3.1]{AL} that
$$
\ord_p\cond(f^\chi)\le e-k+\max(e-k,\ord_p{N}),
$$
so \eqref{cbound2} holds when $\deg{E_{f^\chi,p}}=0$.
In particular, this is the case when the local constituent $\pi_p$
of the automorphic representation associated to $f$ is
supercuspidal. If $\pi_p$ is special then we might have
$\deg{E_{f^\chi,p}}=1$, but this happens only when
$\ord_p\cond(f^\chi)=1$, in which case the left-hand side of
\eqref{cbound2} is at most $k+2\le e+1\le 2e$.
Finally, suppose that $\pi_p$ is a principal series, say
$\pi_p=\pi(\chi_1,\chi_2)$. If $\deg{E_{f^\chi,p}}=2$ then
$\ord_p\cond(f^\chi)=0$, so again we get the upper bound
$k+2\le2e$. If $\deg{E_{f^\chi,p}}=1$ then we may assume that
$\chi\chi_1$ is unramified, so that
$$
\ord_p\cond(f^\chi)=\ord_p\cond(\chi\chi_2)
\le\ord_p\cond(\chi_1)+\ord_p\cond(\chi_2)=\ord_p{N},
$$
and the left-hand side of \eqref{cbound2} is at most
$e+\ord_p{N}$.
\end{proof}

\begin{lemma} \label{Voronoi}
Let $a/q\in\Q$, and let $F:(0,\infty)\to\C$ be a $C^2$ function of
compact support. Define
$$
N_1=(N,q),\quad N_2=\frac{N}{N_1},\quad
q_2=(N_2^\infty,q),\quad
q_1=\frac{q}{q_2},
$$
and write
$$
\frac{a}{q}=\frac{a_1}{q_1}+\frac{a_2}{q_2},
$$
with the fractions on the right-hand side in lowest terms.
Then
\begin{equation}\label{e:voronoi}
\begin{aligned}
\sum_{n=1}^\infty\lambda_f(n) e\!\left(\frac{an}q\right) \! F(n)
=\sum_{\chi\pmod*{N_1}}\sum_{r\mid N_2q_2^2}
&c(f,a/q,r,\chi)\sum_{n=1}^\infty\lambda_{\bar{f}^\chi}(n)
e\!\left(-\frac{\overline{a_1r}n}{q_1}\right)\\
&\cdot2\pi i^k\int_0^\infty F(q_1x)
J_{k-1}\!\left(4\pi\sqrt{\frac{nx}{q_1r}}\right) \mathrm{d}x,
\end{aligned}
\end{equation}
where $\overline{a_1r}$ denotes a multiplicative inverse of
$a_1r\pmod*{q_1}$, and $c(f,a/q,r,\chi)\ll_{q_2}1$.
\end{lemma}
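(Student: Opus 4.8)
The plan is to pass to the Dirichlet series side: expand $F$ by Mellin inversion, invoke Lemma~\ref{additivetwist}, apply the functional equation to each twisted $L$-function, and transform back via the Mellin--Barnes integral for the Bessel function. Put $\widehat F(s)=\int_0^\infty F(x)x^{s-1}\,\mathrm{d}x$; since $F\in C^2$ has compact support in $(0,\infty)$, two integrations by parts give $\widehat F(s)\ll_F(1+|s|)^{-2}$ on vertical lines, and $\widehat F$ is entire. Mellin inversion yields, for $\Re s=\sigma>1$,
\[
\sum_{n=1}^\infty\lambda_f(n)\,e\!\left(\tfrac{an}{q}\right)F(n)=\frac1{2\pi i}\int_{(\sigma)}\widehat F(s)\,D_f(s,a/q)\,\mathrm{d}s,\qquad D_f(s,a/q):=\sum_{n\ge1}\frac{\lambda_f(n)e(an/q)}{n^s}.
\]
By Lemma~\ref{additivetwist}, $D_f(s,a/q)$ is a finite linear combination of terms $m^{-s}L(s,f^\chi)$ over $\chi\pmod*{q}$, and each $L(s,f^\chi)$ is entire; so one may shift the contour to a line $\Re s=-\delta$ with $\delta>0$ small (crossing no pole, with $\Re(1-s)>1$, and $\delta$ small enough that the horizontal tails vanish and the Bessel kernel below converges).

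On the shifted line I would apply the functional equation $\Lambda(s,f^\chi)=\epsilon_{f^\chi}\cond(f^\chi)^{1/2-s}\Lambda(1-s,\overline{f^\chi})$, with $\overline{f^\chi}=\bar f^{\bar\chi}$, to replace $L(s,f^\chi)$ by $\epsilon_{f^\chi}\cond(f^\chi)^{1/2-s}\frac{\Gamma_\C(1-s+\frac{k-1}{2})}{\Gamma_\C(s+\frac{k-1}{2})}L(1-s,\bar f^{\bar\chi})$, and then expand $L(1-s,\bar f^{\bar\chi})=\sum_{n\ge1}\lambda_{\bar f^{\bar\chi}}(n)n^{s-1}$. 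Each term contributes a Dirichlet series in $n$ weighted by
\[
\frac1{2\pi i}\int_{(-\delta)}\widehat F(s)\,\frac{\Gamma_\C(1-s+\frac{k-1}{2})}{\Gamma_\C(s+\frac{k-1}{2})}\,Y^{-s}\,\mathrm{d}s,
\]
where $Y$ is a quotient built from $m$, $\cond(f^\chi)$ and $n$. Using $\Gamma_\C(s)=2(2\pi)^{-s}\Gamma(s)$ and the Mellin--Barnes representation of $J_{k-1}$, this kernel becomes $2\pi i^k$ times $\int_0^\infty F(q_1x)J_{k-1}(4\pi\sqrt{nx/(q_1r)})\,\mathrm{d}x$ once the parameters $q_1$ and $q_1r$ are read off. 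The term-by-term passage is not absolutely convergent on the shifted line, so I would first establish the identity for $F\in C^\infty$ of compact support (where all the rearrangements are absolutely convergent), and then reach general $C^2$ compactly supported $F$ by convolving with a shrinking smooth bump on the multiplicative group $(0,\infty)$, using the bound $\int_0^\infty F(q_1x)J_{k-1}(4\pi\sqrt{nx/(q_1r)})\,\mathrm{d}x\ll_F n^{-5/4}$ (two integrations by parts, exploiting the oscillation of $J_{k-1}$) together with Deligne's bound $\lambda_{\bar f^{\bar\chi}}(n)\ll_\varepsilon n^\varepsilon$ to pass to the limit.

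The substance of the lemma is the recombination: the sum over all $\chi\pmod*{q}$ must collapse to the stated sum over $\chi\pmod*{N_1}$, with the additive phases $e(-\overline{a_1r}n/q_1)$ and divisors $r\mid N_2q_2^2$. Factor $\chi=\chi_1\chi_2$ with $\chi_i\pmod*{q_i}$ --- legitimate since $q_1,q_2$ are coprime, $q_1$ collecting the prime powers $p^{\ord_pq}$ over primes $p\mid q$ with $\ord_pN\le\ord_pq$ and $q_2$ the rest. At the primes dividing $q_1$, where $f$ is tame relative to $q$, the Gauss-sum reciprocity $\varphi(p^j)^{-1}\sum_{\psi\pmod*{p^j}}\tau(\overline\psi)\psi(m)=e(m/p^j)$ for $(m,p)=1$ --- the dual of the expansion used to prove Lemma~\ref{additivetwist} --- collapses the $\chi_1$-sum, after tracking the local Gauss-sum data carried by $\epsilon_{f^\chi}$, into the single additive character $e(-\overline{a_1r}n/q_1)$ twisting $\bar f$, leaving only a residual character of conductor dividing the $N$-part at those primes. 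At the primes dividing $q_2$, where $\ord_pN>\ord_pq$, the twist $f^{\chi_2}$ differs from $f$ only through data bounded by $\ord_pN$, and the leftover freedom is packaged into the divisor $r$ and the coefficient $c(f,a/q,r,\chi)$. The conductor and Euler-factor-degree inequalities for $f^\chi$ proved within Lemma~\ref{additivetwist} are exactly what pin the surviving character to modulus dividing $N_1$ and force $r\mid N_2q_2^2$; the bound $c(f,a/q,r,\chi)\ll_{q_2}1$ then follows from those local data and Deligne's bound, as in Lemma~\ref{additivetwist} but with the $q_1$-part now absorbed rather than accumulated in the constant. Relabelling $\bar\chi$ as $\chi$ (a permutation of the characters mod $N_1$) puts the result in the form \eqref{e:voronoi}.

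I expect this recombination to be the main obstacle: carrying out, prime by prime, the collapse of a full character sum modulo $q$ under the functional equation, and verifying that no character of conductor worse than $N_1$, and no divisor outside $N_2q_2^2$, can survive even when $N$ is highly non-squarefree. The analytic ingredients --- Mellin inversion, the contour shift, the Bessel-kernel evaluation, and the convergence estimates --- are routine by comparison.
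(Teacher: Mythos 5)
Your analytic frame (Mellin inversion, contour shift, Mellin--Barnes evaluation of the Bessel kernel, smoothing to pass from $C^\infty$ to $C^2$) is fine, and the final approximation step even matches the paper's. But the step you yourself flag as ``the main obstacle'' is a genuine gap, and it is the entire content of the lemma. After applying Lemma~\ref{additivetwist} at the full modulus $q$ and the functional equation of each $L(s,f^\chi)$, the dual side carries the root numbers $\epsilon_{f^\chi}$ for \emph{all} $\chi\pmod*{q}$, and your claimed collapse of the $\chi_1$-sum (the part modulo $q_1$) into the single additive character $e(-\overline{a_1r}n/q_1)$ requires explicit evaluation of these root numbers --- equivalently of the Atkin--Li pseudo-eigenvalues of the twisted newforms --- prime by prime, including imprimitive twists and the cases where $\cond(f^\chi)$ drops below $Np^{2\ord_p q}$. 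Lemma~\ref{additivetwist} gives you no such information (its coefficients are only bounded, not identified), and ``tracking the local Gauss-sum data carried by $\epsilon_{f^\chi}$'' is an assertion, not an argument; it is also what would pin the surviving characters to modulus $N_1$, force $r\mid N_2q_2^2$, and produce a coefficient independent of $q_1$ (hence $\ll_{q_2}1$), none of which you verify. In effect you are proposing to reprove the Voronoi summation formula from scratch in greater generality than the literature, with the hard local computation omitted.

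The paper takes a different and much shorter route that avoids any recombination: it first splits $\frac aq=\frac{a_1}{q_1}+\frac{a_2}{q_2}$ and applies Lemma~\ref{additivetwist} only at the bad modulus $q_2$, as a coefficient-level identity (no functional equation, no root numbers), writing $\lambda_f(n)e(a_2n/q_2)$ in terms of $\lambda_{f^\chi}(n/m)$ with $\chi\pmod*{q_2}$. It then applies the known Voronoi formula of Kowalski--Michel--VanderKam \cite[Theorem~A.4]{KMV} directly to the remaining additive twist by $a_1/q_1$ of each $f^\chi$; the hypothesis of that theorem holds automatically because, for $p\mid q_1$, one has $\ord_p\cond(f^\chi)\le\ord_pN\le\ord_pq_1$ by the very definition of the splitting, so the part of $\cond(f^\chi)$ prime to its gcd with $q_1$ is coprime to $q_1$. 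The bookkeeping ($g^{\overline\psi_{D_2}}=\bar g^{\psi_{D_1}}$, $r=mD_2\mid N_2q_2^2$, relabelling $\chi\mapsto\xi_{N_1/q_2}\overline\chi$) then yields \eqref{e:voronoi} with $c(f,a/q,r,\chi)\ll_{q_2}1$. If you want to salvage your route, you would need to import the explicit pseudo-eigenvalue formulas from \cite{AL} (or, more efficiently, quote \cite{KMV} as the paper does) to carry out the collapse you describe.
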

\begin{remark}
Note that $q_2\mid N_1$, so that both $r$ and the coefficients
$c(f,a/q,r,\chi)$ are $O_N(1)$, independent of $q$.
\end{remark}
\begin{proof}
Since $q_2\mid N_1$, we have $\lcm(Nq_2,q_2^2)=Nq_2$, so applying
Lemma~\ref{additivetwist} with $\alpha=a_2/q_2$, we get
$$
\lambda_f(n)e\!\left(\frac{a_2n}{q_2}\right)
=\sum_{\chi\pmod*{q_2}}
\sum_{m\big|\bigl(\frac{Nq_2}{\cond(f^\chi)},n\bigr)}
C(f,a_2/q_2,m,\chi)\lambda_{f^\chi}\!\left(\frac{n}{m}\right),
$$
for some numbers $C(f,a_2/q_2,m,\chi)\in\C$.

Next we compute the sum
$$
\sum_{\substack{n\ge1\\m\mid n}}
\lambda_{f^\chi}\!\left(\frac{n}{m}\right)
e\!\left(\frac{a_1n}{q_1}\right)F(n)
=\sum_{n=1}^\infty\lambda_{f^\chi}(n)e\!\left(\frac{a_1mn}{q_1}\right)F(mn)
$$
by applying the Voronoi summation formula \cite[Theorem~A.4]{KMV}.
Let us first suppose that $F$ is smooth, which is a hypothesis of
loc.~cit. Put $g=f^\chi$ and $D=\cond(g)$, so that $g\in
S_k(\Gamma_0(D),\psi)$, where $\psi=\xi\chi^2$.
Set $D_1=(D,q_1)=N_1/q_2$, $D_2=D/D_1$, and split the
character $\psi$ as a product $\psi_{D_1}\psi_{D_2}$ of characters
modulo $D_1$ and $D_2$, respectively. Then \cite[Theorem~A.4]{KMV}
yields
\begin{align*}
&\sum_{n=1}^\infty\lambda_g(n)e\!\left(\frac{a_1mn}{q_1}\right)F(mn)\\
&=\overline{\psi_{D_1}(a_1m)}\psi_{D_2}(-q_1)\frac{\eta_g(D_2)}{\sqrt{D_2}}
\sum_{n=1}^\infty\lambda_{g^{\overline{\psi}_{D_2}}}(n)
e\!\left(-\frac{\overline{a_1mD_2}}{q_1}\right)
\int_0^\infty F(mq_1x)J_{k-1}\!\left(4\pi\sqrt{\frac{nx}{q_1D_2}}\right) \mathrm{d}x,
\end{align*}
where $\eta_g(D_2)$ is a constant of modulus $1$ and $\overline{a_1mD_2}$
is an inverse of $a_1mD_2\pmod*{q_1}$.
Note that $g^{\overline{\psi}_{D_2}}=\bar{g}^{\psi_{D_1}}$, where
$\bar{g}\in S_k(\Gamma_0(D),\overline{\psi})$ is the dual of $g$.
Since $D_1$ is coprime to the modulus of $\chi$, we further have
$\psi_{D_1}=\xi_{D_1}=\xi_{N_1/q_2}$, so that
$\bar{g}^{\psi_{D_1}}=\bar{f}^{\overline{\chi}\xi_{N_1/q_2}}$.
Since $m$ is restricted to the divisors of $Nq_2/D$, we see that $mD_2$
divides $Nq_2/D_1=N_2q_2^2$. Writing $r=mD_2$ and
making the change of variables $x\mapsto x/m$,
the last line becomes
$$
\overline{\psi_{D_1}(a_1m)}\psi_{D_2}(-q_1)\frac{\eta_g(D_2)}{m\sqrt{D_2}}
\sum_{n=1}^\infty\lambda_{\bar{f}^{\overline{\chi}\xi_{N_1/q_2}}}(n)
e\!\left(-\frac{\overline{a_1r}n}{q_1}\right)
\int_0^\infty F(q_1x)J_{k-1}\!\left(4\pi\sqrt{\frac{nx}{q_1r}}\right) \mathrm{d}x.
$$
From the estimate provided by Lemma~\ref{additivetwist}, we have
$$
\overline{\psi_{D_1}(a_1m)}\psi_{D_2}(-q_1)\frac{\eta_g(D_2)}{m\sqrt{D_2}}
C(f,a_2/q_2,m,\chi)\ll_{q_2}1.
$$
Making the change of variables $\chi\mapsto\xi_{N_1/q_2}\overline{\chi}$
yields \eqref{e:voronoi}.

It remains only to see that \eqref{e:voronoi} is valid if $F$ is merely
$C^2$ and not necessarily smooth.
Making the substitution $x=q_1r(\frac{u}{4\pi})^2$, we have
\[
\int_0^\infty F(q_1x) J_{k-1}\!\left(4\pi\sqrt{\frac{nx}{q_1r}}\right) \mathrm{d}x
=\frac{q_1r}{8\pi^2} \int_0^\infty u^{-k}
F\!\left(\frac{q_1^2ru^2}{16\pi^2}\right) u^k J_{k-1}(\sqrt{n}u)\,\mathrm{d}u.
\]
Applying integration by parts twice and using the estimates
\[
\frac{\mathrm{d}}{ \mathrm{d}x} \left\{x^k J_k(x)\right\}= x^k J_{k-1}(x),
\quad
\frac{\mathrm{d}}{ \mathrm{d}x} \left\{x^{k+1} J_{k+1}(x)\right\}= x^{k+1} J_k(x),
\quad\text{and}\quad
J_{k+1}(x)\ll_k\frac1{\sqrt{x}},
\]
we see that this integral is $O(n^{-\frac54})$. Thus,
the sum over $n$ on the right-hand side of \eqref{e:voronoi}
is absolutely convergent, and the lemma follows by a standard
argument using smooth approximations of $F$.
\end{proof}

\section{Proof of Theorem~\ref{mainthm}}\label{s:proof}
\subsection{Initial reduction}
Let $f$ be as in the statement of Theorem~\ref{mainthm}, and
let $t\in\R$. By replacing $f$ with $\bar{f}$ if necessary,
we may assume without loss of generality that $t\ge0$.
We may further assume that
\begin{equation}\label{tbound}
t\ge\max\big\{k^{\frac32}\log{k},N^{\frac32},t_0\big\}
\end{equation}
for a large constant $t_0$, as otherwise the convexity bound implies
Theorem~\ref{mainthm}.

Let $C$ denote the analytic conductor defined in \eqref{conductor},
and fix, for the remainder of the paper, a choice of
$\delta\in\{0,1-\frac8{3\pi}\}$. With $\delta=0$, all implied constants
depend at most polynomially on $k$ and $N$.
In this section, we prove that
\begin{equation} \label{reduction1}
\begin{split}
|L(\tfrac{1}{2}+it,f)| &\ll
\sum_{M}  \frac{1}{\sqrt{M}} \max_{M_1 \in [M,2M]} \left|\sum_{M_1 \le n \le 2M} \lambda_f(n) \, n^{-it} \right|
+ O_{k,N}\Big(\sqrt{M_0} \, (\log M_0)^{-\delta} \Big)
\end{split}
\end{equation}
for any integer $M_0\in[2,\sqrt{C}]$, where $M$ runs through numbers
of the form $2^KM_0$ for integers $K\in[0,\log_2\frac{\sqrt{C}}{M_0}+1]$.
Therefore, in order to prove Theorem~\ref{mainthm}, it suffices to estimate exponential sums of the form
\begin{equation} \label{dyadicsum}
\sum_{M_1 \le n \le M_2} \lambda_f(n) \, n^{-it} 
\end{equation}
where $M_1\le M_2 \le 2 M_1$.  

Our starting point for the proof of \eqref{reduction1} is the
approximate functional equation for $L(\frac{1}{2}+it,f)$ in the form
of Lemma~\ref{Harcos}. We remark that, without loss of generality, we
may suppose that the test function $g$ appearing there is decreasing
and supported on the interval $[0,2]$. For example, the function
\[
g(x) = \left\{ \begin{array}{ll}
 1, &\mbox{ if $x < \frac{1}{2}$,} \\
  \displaystyle{ \alpha \int_{\log_2x}^{1} e^{-\frac{1}{1-t^2}} \, \mathrm{d}t}, &\mbox{ if $\frac{1}{2}\le x \le 2$,} \\
  0, &\mbox{ if $x>2$,}
       \end{array} \right.
\]
where $\alpha=e^{\frac12}/(K_1(\frac12)-K_0(\frac12))= 2.25228\ldots$ is chosen so that $g(\frac{1}{2})=1$, has these properties
($K_n(z)$ denotes  the usual $K$-Bessel function). Since 
$|\epsilon_f\Gamma_\C(\tfrac{k}{2}\!-\!it)
/\Gamma_\C(\tfrac{k}{2}\!+\!it)|=1$,
by \eqref{tbound} we have
\[
|L(\tfrac{1}{2}+it,f)| \le 2 \left| \sum_{n =1}^\infty
\frac{\lambda_f(n)}{n^{\frac12+it}} \, g\!\left(\frac{n}{\sqrt{C}} \right) \right| + O\!\left( 1 \right).
\]

By the triangle inequality, since $0\le g(x) \le 1$, we have
\begin{equation}\label{triangle}
\begin{split}
\Bigg| \sum_{n =1}^\infty &\frac{\lambda_f(n)}{n^{\frac12+it}}   \,
g\!\left(\frac{n}{\sqrt{C}} \right) \Bigg| \le \sum_{n\le M_0}
\frac{|\lambda_f(n)|}{\sqrt{n}} + \sum_{M} \left|\sum_{M < n \le 2M}
\frac{\lambda_f(n)}{n^{\frac12+it}}\, g\!\left(\frac{n}{\sqrt{C}}
\right) \right|,
\end{split}
\end{equation}
with $M_0\le\sqrt{C}$ and $M=2^KM_0$ as above.  Note that only finitely
many values of $M$ are relevant, since $\supp(g)\subseteq [0,2]$. We
will choose $M_0$ (depending on $t$) at the end of the proof, but to
fix ideas, we note that
\begin{equation}\label{Mbound}
\left(\frac{t}{\log{t}}\right)^{\frac23}\ll M_0 \ll t^{\frac23}.
\end{equation}
Applying Lemma~\ref{l:L1average}(iv), we have
\[
\sum_{n\le M_0} \frac{|\lambda_f(n)|}{\sqrt{n}} \ll_{k,N}
\sqrt{M_0}(\log M_0)^{-\delta}.
\]

We now simplify the second sum on the right-hand side of \eqref{triangle}
using \cite[Lemma~5.1.1]{Huxley}.
Defining $G(x)=\frac1{\sqrt{x}}g(\frac{x}{\sqrt{C}})$, we may assume that $G$ is decreasing, and hence 
\[
\left|\sum_{M < n \le 2M} \frac{\lambda_f(n)}{n^{\frac12+it}}\, g\!\left(\frac{n}{\sqrt{C}} \right) \right| \ll \frac{1}{\sqrt{M}} \max_{M_1 \in [M,2M]} \left|\sum_{M_1 \le n \le 2M} \lambda_f(n) \, n^{-it} \right|.
\]
Combining estimates yields  \eqref{reduction1}.

\subsection{Farey fractions and stationary phase}\label{s:Farey}
We now turn our attention to estimating the sum in \eqref{dyadicsum}. Let $M$ be a size parameter and suppose that $M_0\le M \ll M_1 \le M_2 \le 2 M_1 \ll M$. Define
\[
R=\sqrt{\frac{M}{M_0}} \quad \text{and} \quad H := \left\lceil \frac{M^2}{R^2t} \right\rceil,
\]
and let
\[
\mathcal{F}(R) = \left\{ \frac{u}{v} : u,v\in\Z, (u,v) = 1, 0 < v \le R \right\}
\] 
denote the set of extended Farey fractions with denominator less than or equal to $R$. Consider the interval 
\[
 \Big[  -\frac{t}{2 \pi (M_1+2H)}, -\frac{t}{2 \pi (M_2-2H)} \Big]
\]
and denote the elements of $\mathcal{F}(R)$ in this interval by
$\alpha_j$, $j=1,\ldots, J$, in increasing order. (If $M_2-M_1 <
4H$ or if this interval contains no elements of $\mathcal{F}(R)$
then \eqref{dyadicsum} is trivially bounded by the error term in
\eqref{shiu}, below. Hence we may assume that this is not the case.) We
make the labeling $\alpha_j = -\frac{u_j}{v_j}$, where $u_j, v_j \in
\Z_{>0}$, $(u_j, v_j)=1$, and $v_j \le R$.

For consecutive Farey fractions
$\alpha_j = -\frac{u_j}{v_j}$ and  $\alpha_{j+1} = -\frac{u_{j+1}}{v_{j+1}}$
the mediant, denoted $\rho_j$, is given by 
$\rho_j = -\frac{u_j+u_{j+1}}{v_{j}+v_{j+1}}$. 
Note that 
\begin{equation}
 \label{fareydiff}
|\rho_j-\alpha_j|=\frac{1}{v_j(v_j+v_{j+1})} \asymp\frac{1}{v_j R}
\end{equation}
and similarly $|\rho_j-\alpha_{j+1}| \asymp\frac{1}{v_j R}$.
Define the function $h(y) = -\frac{t}{2 \pi y}$
and integers $\mathcal{N}_0 = M_1+2H,$ $\mathcal{N}_J = M_2 -2H,$
and $\mathcal{N}_j=\lfloor{h(\rho_j)+\frac12}\rfloor$
for $j=1,\ldots,J-1$. Evidently,
\[
   M_1 <\mathcal{N}_0 < \mathcal{N}_1 < \cdots < \mathcal{N}_{J-1} < \mathcal{N}_J < M_2.
\]
Using the above and assuming that $t_0$ is sufficiently large, we have
\[
\begin{split}
  \mathcal{N}_j - h(\alpha_j) &= h(\rho_j) -h(\alpha_j) + O(1) 
  = \frac{t}{2 \pi} \frac{\rho_j- \alpha_j}{ \alpha_j \rho_j}+ O(1) 
  \\
  &\asymp \frac{t}{ (\frac{t}{M})^2 v_j R }
  \asymp \frac{HR}{v_j}. 
\end{split}
\]
By a similar argument we see that
$h(\alpha_j)-\mathcal{N}_{j-1} \asymp \frac{HR}{v_j}$, and thus
\begin{equation}
  \label{omegajsupport}
\mathcal{N}_{j}-\mathcal{N}_{j-1} \asymp \frac{HR}{v_j}.
\end{equation} 
Next let 
\begin{equation}
  \label{omegaj}
\omega_j(x)=\omega(x-\mathcal{N}_{j-1}) -\omega(x-\mathcal{N}_j),
\end{equation}
where
\begin{equation*}
 \omega(x) =  \begin{cases}
 1, & \text{ for } x \ge H, \\
 \frac{1}{2}(1+\sin^{s+1}(\frac{\pi x}{2H})), & \text{ for } |x| \le H, \\
 0, & \text{ for } x \le -H,
 \end{cases}
\end{equation*}
for an integer $s\ge2$. 
These functions provide a $C^s$ partition of unity on the interval $[M_1+2H, M_2-2H]$.  
In particular,  
\begin{equation*} 
  \sum_{j=1}^{J} \omega_j(n) = 
  \begin{cases}
   0, & \text{ for } x \le M_1, \\
   1, & \text{ for } M_1+2H \le x \le M_2-2H, \\
   0, & \text{ for } x \ge M_2. 
  \end{cases}
\end{equation*}
From this we observe that
\begin{equation}\label{shiu}
\begin{split}
  \Bigg| \sum_{M_1 \le n \le M_2} \frac{\lambda_f(n)}{n^{it}} -
  \sum_{j=1}^{J} \sum_{n =1}^{\infty} \lambda_f(n)n^{-it} \omega_j(n) \Bigg|
  &\le \sum_{m=M_1}^{M_1+2H} |\lambda_f(m)| + \sum_{m =M_2-2H}^{M_2} |\lambda_f(m)|.
  \\
  & \ll_{k,N} \max(H,M^{\frac35})(\log M)^{-\delta},
\end{split} 
\end{equation}
by Lemma~\ref{l:L1average}(ii). Hence
\begin{equation}  \label{difference}\
\begin{split}
  \sum_{M_1 \le n \le M_2} \frac{\lambda_f(n)}{n^{it}} &=
  \sum_{j=1}^{J} \sum_{n =1}^{\infty} \lambda_f(n)n^{-it} \omega_j(n)  +
  O_{k,N}(\max(H,M^{\frac35})(\log M)^{-\delta} )
  \\
  &=  \sum_{j=1}^{J} \sum_{n =1}^{\infty} \lambda_f(n)  e(\alpha_j n) F_j(n) + 
  O_{k,N}(\max(H,M^{\frac35})(\log M)^{-\delta} ),
\end{split}
\end{equation}
where $F_j(n) = n^{-it} e(-\alpha_j n) \omega_j(n)$. 

We would now want to apply Lemma \ref{Voronoi} to the sum involving $F_j(n)$. 
If the level $N$ is not squarefree, then we do not have a Voronoi
formula for every Farey fraction $\alpha_j$. To circumvent this issue,
we decompose each fraction into a `good part' and a `bad part' part where
the bad parts range over a finite set, the additive twists involving
the good part of the fraction can be handled using Voronoi summation,
and the additive characters involving the bad part can be handled by
decomposing into multiplicative characters. To that end, define
\begin{equation*}
  N^{\flat}  = N \prod_{p \mid N} p^{-1} \quad \text{ and } \quad
	\mathcal{B}(N^\flat)  = \left\{ \frac{b}{N^{\flat}} : 0 \le b < N^{\flat}\right\}
 \end{equation*}
and write
\begin{equation*}
   \alpha_j = - \frac{u_j}{v_j} = -\frac{a_j}{q_j} + \beta_j = -\frac{a_j}{q_j} + \frac{c_j}{d_j} 
\end{equation*}
where $a_j,q_j,c_j,d_j \in \Z_{\ge 0}$,
$(a_j,q_j)=(c_j,d_j)=(q_j,d_j)=1$, $\beta_j \in \mathcal{B}(N^\flat)$, and for
every prime $p\mid q_j$ we have $\ord_p(q_j) \ge
\ord_p(N)$. Such a  decomposition always exists and is uniquely
determined; concretely,
\[
d_j = \prod_{\substack{p\mid v_j \\ \ord_p(v_j)<\ord_p(N)}} p^{\ord_p(v_j)}, \quad q_j =\frac{v_j}{d_j},
\]
and $c_j$ is the unique integer in $[0,d_j)$ satisfying $
q_j c_j  \equiv -u_j\pmod*{d_j}$.   Since  $v_j=d_jq_j$, this congruence is equivalent to 
\begin{equation*}
    d_ju_j \equiv -c_jv_j\pmod*{d_{j}^2}.
\end{equation*}

Next we apply Voronoi summation in the form of Lemma \ref{Voronoi} to see that
\begin{equation}\label{applyVoronoi}
\begin{split}
  \sum_{n=1}^{\infty} &\lambda_f(n)  e(\alpha_j n) F_j(n)  = 
  \\
  & \sum_{\beta \in \mathcal{B}(N^\flat)} \sum_{\substack{r \mid N N^{\flat} \\
	(r,q_j)=1}} \sum_{\chi\pmod*{N}} 
  c(f,r,\chi;j) 
   \sum_{\ell=1}^{\infty}  \lambda_{\bar{f}^{\chi}}(\ell) e( \tfrac{ \overline{ r a_j} \ell}{q_j}) 
   2 \pi i^k \int_{0}^{\infty} F_j(q_jx) J_{k-1}\Big(4 \pi \sqrt{ \tfrac{\ell x}{r q_j}}\Big) \, \mathrm{d}x
\end{split}
\end{equation}
for some complex numbers $c(f,r,\chi;j)$ satisfying $c(f,r,\chi;j)
\ll_{N^\flat} 1$. Applying stationary phase to the integral on the
right-hand side of this equation we derive the following proposition,
deferring the proof until Section~\ref{s:statphase}.
\begin{proposition} \label{NathanProp}
Given $\beta=\frac{c}{d}\in \mathcal{B}(N^\flat)$ and $r\mid NN^\flat$,
let
$$
J(\beta,r) = \{j\in\{1,\ldots,J\}:\beta_j =\beta, (q_j,r)=1 \}.
$$
If $s \ge 6$, then  
\begin{equation}
\begin{split}
  \label{keyidentity}
 \sum_{j \in J(\beta,r)} & c(f,r,\chi;j)  \sum_{\ell=1}^{\infty}  \lambda_{\bar{f}^{\chi}}(\ell) e( \tfrac{ \overline{ r a_j} \ell}{q_j}) 
   2 \pi i^k \int_{0}^{\infty} F_j(q_jx) J_{k-1}\Big(4 \pi \sqrt{ \tfrac{\ell x}{r q_j}}\Big) \, \mathrm{d}x
   \\
   &= \sum_{j \in J(\beta,r)} c(f,r,\chi;j) 
    \sum_{ \pm } (\mp1)^k
   \sum_{\ell\le K_1rd^{-2}}
	 \lambda_{\bar{f}^{\chi}}(\ell) e( \tfrac{ \overline{ r a_j} \ell}{q_j})  
    \omega_j(x_{j}^{\pm}(\tfrac{\ell}{r})) h_{j}^{\pm}(\tfrac{\ell}{r})  e(g_{j}^{\pm}(\tfrac{\ell}{r})) 
    \\
& \quad  +O_{k,N,s}\!\left(\Big( \sqrt{M} \Big( \frac{M}{R^2} \Big)^{\frac{1}{2(s-1)}}
 + \frac{M^{\frac{5}{2}} R^2}{H^3}  \Big) (\log M_0)^{-\delta} \right),
\end{split}
\end{equation}
 where $x_{j}^{\pm}(\ell)$ are stationary points defined by 
\begin{equation}
  \label{xjlpm}
  x_{j}^{\pm}(\ell) = \left(\frac{d}{2u_j}\right)^2
\left(\sqrt{\ell+\frac{2tu_jq_j}{\pi d}}\pm \sqrt{\ell}\right)^2,
\end{equation}
\begin{equation}
\begin{split}
\label{gjlpm}
 g_{j}^{\pm}(\ell)   &= - \frac{t}{2 \pi} \log  x_{j}^{\pm}(\ell) + \frac{u_j}{v_j} x_j^\pm(\ell) \mp \frac{2}{q_j} \sqrt{\ell  x_j^\pm(\ell)} + \frac{1}{8} \mp \frac{1}{8},
\end{split}
  \end{equation}
\begin{equation}
  \label{hjlpm}
   h_{j}^{\pm}(\ell)  = \left( \frac{q_j t \sqrt{\ell}}{\pi
	 x_{j}^{\pm}(\ell)^{\frac32}} \pm \frac{\ell}{x_{j}^{\pm}(\ell)}
	 \right)^{-\frac12},
\end{equation}
and $K_1\asymp M/R^2=M_0$.
\end{proposition}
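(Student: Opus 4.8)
The plan is to prove Proposition~\ref{NathanProp} by treating, for each $j\in J(\beta,r)$ and each $\ell$, the oscillatory integral
\[
\mathcal{I}_j(\ell):=2\pi i^k\int_0^\infty F_j(q_jx)\,J_{k-1}\!\Big(4\pi\sqrt{\tfrac{\ell x}{rq_j}}\Big)\,\mathrm{d}x
\]
by the principle of stationary phase, and then resumming the resulting asymptotics over $\ell$ and over $j$. Working term by term in $\ell$ is legitimate because the sums over $\ell$ in \eqref{applyVoronoi} and \eqref{keyidentity} are absolutely convergent, by the $O(\ell^{-5/4})$ bound on $\mathcal{I}_j(\ell)$ obtained in the proof of Lemma~\ref{Voronoi}. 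First I would substitute $n=q_jx$; recalling $\alpha_j=-u_j/v_j$ and $v_j=d_jq_j$, the integrand becomes $\omega_j(n)\,n^{-it}e(u_jn/v_j)\,J_{k-1}\!\big(\tfrac{4\pi}{q_j}\sqrt{\ell n/r}\big)$, a $C^s$, compactly supported amplitude against a Bessel function whose argument, on $\supp\omega_j$ (where $n\asymp M$ since $H\ll M$), is $\gg_N\sqrt{M}/R=\sqrt{M_0}$ and hence large. Inserting the Hankel asymptotic expansion
\[
J_{k-1}(y)=\sum_{\pm}\frac{e\!\big(\pm\tfrac{y}{2\pi}\mp\tfrac{2k-1}{8}\big)}{\sqrt{2\pi y}}\,W^\pm(y),\qquad W^\pm(y)=1+O_k(y^{-1}),
\]
with $W^\pm$ obeying the usual derivative bounds, then expresses $\mathcal{I}_j(\ell)$ as a sum over the two signs of integrals with phase $\phi_j^\pm(n)=-\tfrac{t}{2\pi}\log n+\tfrac{u_j}{v_j}n\pm\tfrac{2}{q_j}\sqrt{\ell n/r}$ and slowly varying amplitude of size $\asymp(\ell n/(rq_j^2))^{-1/4}$; the tail of the expansion contributes an acceptable error since $y$ is large throughout.

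Next I would carry out the stationary phase analysis, adapting the exponential-integral estimates from Huxley's treatment \cite{Huxley}. The equation $\phi_j^{\pm\prime}(n)=0$, after multiplying through by $n$ and putting $w=\sqrt{n}$, becomes a quadratic in $w$ with a unique positive root; one checks that the two critical points so obtained, together with the phase values and curvatures there, reproduce exactly the quantities $x_j^\pm(\ell/r)$, $g_j^\pm(\ell/r)$, $h_j^\pm(\ell/r)$ of \eqref{xjlpm}--\eqref{hjlpm}, with the $\pm$ labels in the proposition matched to the opposite sign in the Bessel expansion (the $e(+\tfrac{y}{2\pi})$ term being critical below $h(\alpha_j)$ and the $e(-\tfrac{y}{2\pi})$ term above). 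The prefactor $2\pi i^k$, the Bessel phase shifts, and the $e(\pm\tfrac18)$ arising from stationary phase combine to give the sign $(\mp1)^k$ and the additive constant $\tfrac18\mp\tfrac18$. An exponential-integral lemma --- which I would state and prove in Section~\ref{s:statphase} --- then yields, whenever $x_j^\pm(\ell/r)$ lies well inside $\supp\omega_j$,
\[
\int\omega_j(n)\,A_j^\pm(n)\,e\!\big(\phi_j^\pm(n)\big)\,\mathrm{d}n=\omega_j\!\big(x_j^\pm(\tfrac{\ell}{r})\big)\,h_j^\pm(\tfrac{\ell}{r})\,e\!\big(g_j^\pm(\tfrac{\ell}{r})\big)+(\text{error}),
\]
the error depending on $\phi_j^{\pm\prime\prime}$ and $\phi_j^{\pm\prime\prime\prime}$, on the derivatives of the amplitude (which, through $\omega_j$, have scale $H$), and on the distance of the critical point from the ends of $\supp\omega_j$. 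Because $\omega_j$ is only $C^s$, the number of admissible integrations by parts is capped, and tracking the estimates shows that $s\ge6$ suffices to reach the claimed error through the secondary terms.

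I would then establish the truncation $\ell\le K_1rd^{-2}$. Since $x_j^\pm(0)=h(\alpha_j)$ lies in the interior of $\supp\omega_j$ by the construction of the $\mathcal{N}_j$, and since $x_j^+$ is increasing and $x_j^-$ decreasing in $\ell$, a direct computation --- using $H\asymp M^2/(R^2t)$ and $u_j/v_j\asymp t/M$ --- shows that $x_j^\pm(\ell/r)$ remains in $\supp\omega_j$, whose relevant side has width $\asymp HR/v_j$ by \eqref{omegajsupport}, only for $\ell\ll M_0rd^{-2}$; this is the origin of $K_1\asymp M/R^2=M_0$. For $\ell$ beyond the cutoff the phase has no critical point on $\supp\omega_j$, $|\phi_j^{\pm\prime}|$ is bounded below in terms of the displacement of the would-be critical point, and repeated integration by parts (again limited by $s$) renders the contribution negligible; carrying this out and summing over the tail produces the term $M^{5/2}R^2/H^3$, while the term $\sqrt{M}(M/R^2)^{1/(2(s-1))}$ emerges from optimizing how close to the endpoints of $\supp\omega_j$ a critical point is allowed to come before one reverts to a cruder estimate.

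The main obstacle will be the error bookkeeping. For each fixed $j$, one must sum the per-$\ell$ stationary-phase errors against $|\lambda_{\bar{f}^\chi}(\ell)|$ using Lemma~\ref{l:L1average}(ii)--(iv) --- which applies with constants uniform in $\chi$, since $\bar{f}^\chi$ has level $O_N(1)$ --- and then sum over $j\in J(\beta,r)$, exploiting $v_j\le R$, the spacing \eqref{omegajsupport}, and the boundedness of sums of the form $\sum_j v_j^{-1}$ over Farey fractions of denominator at most $R$ lying in the relevant short interval. Keeping every estimate uniform in $j$ --- tracking throughout the dependence on $v_j$, $q_j$, $d$, and the location of the critical point --- and checking that the accumulated error collapses to precisely $\big(\sqrt{M}(M/R^2)^{1/(2(s-1))}+M^{5/2}R^2/H^3\big)(\log M_0)^{-\delta}$ is the delicate part; once the geometry is in place, the individual stationary-phase estimates are routine.
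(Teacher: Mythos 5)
Your outline does follow the same broad route as the paper (Bessel/Hankel asymptotics, extraction of the stationary points giving $x_j^\pm$, $g_j^\pm$, $h_j^\pm$, the cutoff $K_1\asymp M/R^2$ coming from the requirement that the stationary point stay within reach of $\supp\omega_j$, integration by parts limited by the $C^s$ smoothness in the non‑stationary ranges, and Farey‑count bookkeeping over $j$), but there is a genuine gap in the per‑$\ell$ stationary‑phase step as you formulate it. You evaluate the integral only ``whenever $x_j^\pm(\ell/r)$ lies well inside $\supp\omega_j$'', with an error depending on the distance of the critical point from the ends of $\supp\omega_j$, and otherwise ``revert to a cruder estimate''. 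However, for a large part of the range $\ell\le rK_1d^{-2}$ this primary evaluation never applies: when $v_j\asymp R$ (the most numerous $j$) the support of $\omega_j$ has length $\asymp HR/v_j\asymp H$, so the critical point sits in the $C^s$ transition zone of $\omega_j$, or just beyond it, for a positive proportion of the admissible $\ell$. There an error anchored to the distance from the support edge degenerates, and a crude bound on the integral neither produces nor shows the negligibility of the terms $\omega_j(x_j^\pm(\tfrac{\ell}{r}))h_j^\pm(\tfrac{\ell}{r})e(g_j^\pm(\tfrac{\ell}{r}))$ that the right-hand side of \eqref{keyidentity} demands for those $\ell$ (they are not individually small: $\omega_j$ can be of size $1$ in the transition zone and $h_j^\pm$ is of the same size as for interior $\ell$). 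The paper's device, which some substitute for is needed, is to apply the stationary‑phase lemma (Lemma~\ref{stationaryphase}) on the window $[x_j^\pm(\tfrac{\ell}{r})-\tfrac{M}{4},\,x_j^\pm(\tfrac{\ell}{r})+\tfrac{M}{4}]$, shown via a mean‑value argument ($|\mathcal{N}_{j-1}-x_j^{+}(\tfrac{\ell}{r})|\ll \tfrac{M^2}{tR}+1$) to contain all of $\supp\omega_j$ for every $\ell\le rK_1d^{-2}$; the amplitude then vanishes at the window endpoints and the evaluation, including the factor $\omega_j(x_j^\pm(\tfrac{\ell}{r}))$, holds uniformly in $\ell$, with an error independent of where the critical point sits relative to the support.

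A second, related problem is that the quantitative error analysis—which is the actual content of the proposition—is asserted rather than derived, and the mechanisms you name are not the ones that produce the stated bounds. The term $\sqrt{M}(M/R^2)^{1/(2(s-1))}$ does not come from optimizing how close critical points may come to the endpoints of $\supp\omega_j$; in the paper it arises from truncating the $\ell$-sum at $K=(M/(v_jR))^{2/(s-1)}M_0$, balancing an $s$-fold integration by parts for $\ell>K$ (whose gain is governed by the derivative bounds $\frac{\mathrm{d}^s}{\mathrm{d}x^s}\{F_j(x)x^{-(k-1)/2}\}\ll (v_jR)^{-s}x^{-(k-1)/2}$ of Lemma~\ref{Andylemma}) against the error in the Bessel asymptotic for $\ell\le K$, and then summing over $j$; your exact Hankel decomposition with weights $W^\pm$ is a legitimate substitute for this two‑stage truncation, but you would still need to perform an analogous balancing, with the per‑step loss from differentiating the amplitude tracked in $v_j$, to land on that exponent. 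Likewise, ``boundedness of sums of the form $\sum_j v_j^{-1}$'' is not available—over the relevant interval such sums can be of size $tR/M\gg1$—and what is actually needed is the dyadic Farey count $\#\{j:Q\le v_j\le 2Q\}\ll MQ^2/(HR^2)+1$, applied to each of the weights $v_j^{-s/(2(s-1))}$, $(v_j/R)^{3/2}$ and $v_j^{-1/2}$ coming from the three error sources. Until these computations are carried out (and the transition‑zone issue above is repaired), the two error terms in \eqref{keyidentity} remain unproved.
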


Combining \eqref{difference}, \eqref{applyVoronoi}, and Proposition \ref{NathanProp}, we have
\begin{equation}
\begin{split}
  \label{truncatedvoronoi}
     &\sum_{M_1 \le n \le M_2} \frac{\lambda_f(n)}{n^{it}} = \\
  &  \sum_{\beta \in \mathcal{B}(N^\flat)} \sum_{r \mid N N^{\flat}}
	\sum_{\chi\pmod*{N}} 
   \sum_{j \in J(\beta,r)} c(f,r,\chi;j) 
    \sum_{ \pm } (\mp1)^k
   \sum_{\ell\le K_1rd^{-2}}
	 \lambda_{\bar{f}^{\chi}}(\ell) e( \tfrac{ \overline{ r a_j} \ell}{q_j})  
    \omega_j(x_{j}^{\pm}(\tfrac{\ell}{r})) h_{j}^{\pm}(\tfrac{\ell}{r})  e(g_{j}^{\pm}(\tfrac{\ell}{r})) \\
&  \qquad +O_{k,N,s}\!\left(\Big( \sqrt{M} \Big( \frac{M}{R^2} \Big)^{\frac{1}{2(s-1)}}
 + \frac{M^{\frac{5}{2}} R^2}{H^3} +\max(H,M^{\frac35})  \Big) (\log M_0)^{-\delta} \right).
\end{split}
\end{equation}
Since $R=\sqrt{M/M_0}\ge1$, $M\ll\sqrt{N}t$, $M_0 \ll t^{\frac23}$,
and $|t|\ge N^{\frac32}$, we see that
$$
\max(H,M^{\frac35}) \ll  \frac{M^{\frac{5}{2}} R^2}{H^3}
\quad\text{and}\quad
\frac{M}{R}\ll\frac{R^8t^3}{M^{\frac72}}.
$$
Therefore, assuming $s\ge 2$ and using the definition of $H$,
we find that the error term in \eqref{truncatedvoronoi} can be replaced by
$O_{k,N}(R^8t^3M^{-\frac72}(\log M_0)^{-\delta})$.

The next step is to split the sum in \eqref{truncatedvoronoi} so that the integers $\ell$ lie in dyadic intervals and 
 the sum over $j$ is reorganized so that the $v_j$ lie in dyadic intervals.   Thus we have
 \begin{equation}
\begin{split}
  \label{dyadic}
     \sum_{M_1 \le n \le M_2} \frac{\lambda_f(n)}{n^{it}} & =   \sum_{\beta \in \mathcal{B}(N^\flat)} \sum_{r \mid N N^{\flat}}  
	\sum_{\chi\pmod*{N}}   \sum_{ \pm } (\mp1)^k  \sum_{L,U,V}  \\
  & \cdot  \sum_{\ell=L_1}^{L_2}
	 \lambda_{\bar{f}^{\chi}}(\ell)
   \sum_{\substack{ j \in J(\beta,r) \\  (u_j,v_j)=1  \\
   U_1 \le u_j \le U_2,  V_1 \le v_j \le V_2 \\  du_j \equiv -cv_j\pmod*{d^2}. 
    }} c(f,r,\chi;j) 
    e( \tfrac{ \overline{ r a_j} \ell}{q_j})  
    \omega_j(x_{j}^{\pm}(\tfrac{\ell}{r})) h_{j}^{\pm}(\tfrac{\ell}{r})  e(g_{j}^{\pm}(\tfrac{\ell}{r})) \\
    &  \qquad +O_{k,N}(R^8t^3M^{-\frac72}(\log M_0)^{-\delta})
\end{split}
\end{equation}
where
$L_1, L_2, U_1, U_2, V_1,$ and $V_2$ are positive integers satisfying $L \ll L_1 \le L_2 \le L$, $U \ll U_1 \le U_2 \le U$, and $V \ll V_1 \le V_2 \le V$ and $L, U,V$ run through powers of 2 satisfying
\[
  L\ll \frac{rM}{(dR)^2}, \quad V\ll R, \quad \text{and} \quad U \asymp \frac{tV}{M}.
\]
Observe that the third condition follows from the fact that if $v_j \asymp V$, then $u_j \asymp \frac{tV}{M}$ since
$\frac{u_j}{v_j} \asymp \frac{t}{M}$.   The next step 
is to apply a large sieve inequality for Farey fractions to the inner double sum in \eqref{dyadic}.  In fact, we bound a
more general sum with Proposition \ref{Large Sieve}, below.  

\subsection{The large sieve and conclusion of the proof}
To estimate the main term on the right-hand side of
\eqref{truncatedvoronoi}, we employ the following large sieve
inequality, deferring the proof until Section~\ref{s:largesieve}.

\begin{proposition}  \label{Large Sieve} 
Let notation be as above and fix $\beta=\frac{c}{d}\in \mathcal{B}(N^\flat)$ and $r\mid NN^\flat$.
Let $L_1, L_2, U_1, U_2, V_1,$ and $V_2$ be positive integers satisfying $L \ll L_1 \le L_2 \le L$, $U \ll U_1 \le U_2 \le U$, and $V \ll V_1 \le V_2 \le V$ where $L,U,$ and $V$ are size parameters satisfying
\[
L\ll \frac{rM}{(dR)^2}, \quad V\ll R, \quad \text{and} \quad U \asymp \frac{tV}{M}.
\]
Define
\[
\mathcal{R} = \Big\{ (u,v) \in \Z^2 :
U_1\le u \le U_2,\, V_1\le v \le V_2,\,(u,v)=1,\,
du\equiv-cv\pmod*{d^2}\Big\}
\]
and
\[
\eta=\sqrt{\frac{d^2t}{rLUV}},\quad
X_0 = \sqrt{L\max(\eta,1)}.
\]
Then, given any complex numbers
$\nu(j)$, $\lambda(\ell)$ for
$j\in J(\beta,r)$ and $\ell\in\Z\cap[L_1,L_2]$,
we have 
\begin{equation}\label{large_sieve_estimate}
\begin{split}
\Bigg| \sum_{\ell=L_1}^{L_2}&\lambda(\ell)
\sum_{ \substack{j \in J(\beta,r) \\ (u_j,v_j) \in \mathcal{R}} }
\nu(j)h_{j}^\pm(\ell/r)
\omega_j\big( x_j^\pm(\ell/r) \big)  e\big( g_{j}^\pm(\ell/r) \big) \Bigg|^2
\\
&\ll  \max_{j\in J(\beta,r)}\big|\nu(j)\big|^2
\sum_{\ell=L_1}^{L_2}|\lambda(\ell)|^2
\cdot\frac{\eta rV}{U}
\left\{ X_0\big(\#\mathcal{R}\big)^2
+\int_{X_0}^L B\big(\Delta_1(X),\Delta_2(X)\big)\,\mathrm{d}X \right\},
\end{split}
\end{equation}
where the implied constant is absolute,
$\Delta_1$ and $\Delta_2$ are functions satisfying
\begin{equation}\label{DeltaConditions}
\Delta_1(X) \ll \frac{L}{X^2}
\quad\text{and}\quad
\Delta_2(X) \ll\frac{L}{\eta X^2},
\end{equation}
and $B(\Delta_1,\Delta_2)$ is the number of pairs
$(i,j)\in J(\beta,r)^2$ such that $(u_i,v_i),(u_j,v_j)\in\mathcal{R}$,
\begin{equation} \label{fraction_condtions}
\Bigg\|\frac{\overline{a_i r}}{q_i}-\frac{\overline{a_j r}}{q_j}\Bigg\|
\le\Delta_1,
\quad\text{and}\quad
|u_iv_i-u_jv_j|\le UV\Delta_2.
\end{equation}
\end{proposition}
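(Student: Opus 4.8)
The plan is to prove Proposition~\ref{Large Sieve} by expanding the square and reducing to a counting problem via duality, following the general strategy of Bombieri--Iwaniec as adapted by Huxley. First I would apply Cauchy--Schwarz in the variable $\ell$ to pull out $\sum_\ell |\lambda(\ell)|^2$, leaving a diagonal-type expression
\[
\sum_{\ell=L_1}^{L_2} \Bigg| \sum_{\substack{j\in J(\beta,r)\\(u_j,v_j)\in\mathcal R}} \nu(j) h_j^\pm(\ell/r)\,\omega_j(x_j^\pm(\ell/r))\, e(g_j^\pm(\ell/r))\Bigg|^2
\]
to bound. Opening this square gives a double sum over pairs $(i,j)$ with $(u_i,v_i),(u_j,v_j)\in\mathcal R$, and I would factor out $\max_j|\nu(j)|^2$ and bound the amplitude product $h_i^\pm h_j^\pm$ trivially; from \eqref{xjlpm}--\eqref{hjlpm} and the size constraints $L\ll rM/(dR)^2$, $U\asymp tV/M$, $V\ll R$, one computes $h_j^\pm(\ell/r)\ll \sqrt{\eta rV/U}$ in the relevant range, which accounts for the factor $\eta rV/U$ in front. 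The phase in the resulting sum is $g_i^\pm(\ell/r)-g_j^\pm(\ell/r)$, and the key point is to understand when this is small, i.e. when there is no cancellation in the $\ell$-sum.

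The core step is the stationary-phase/Weyl-differencing analysis of the exponential sum over $\ell$. I would split the $\ell$-interval dyadically (or use a smooth partition) and, on each piece, apply the standard exponent-pair or van der Corput $B$-process estimate: the sum over $\ell\in[X,2X]$ of $e(g_i^\pm(\ell/r)-g_j^\pm(\ell/r))$ is either $O(X^{1/2})$-type small, or else the pair $(i,j)$ must satisfy that the first and second derivatives in $\ell$ of the phase difference are both controlled. Computing $\frac{\partial}{\partial\ell} g_j^\pm(\ell/r)$ from \eqref{gjlpm}--\eqref{xjlpm} — using that $x_j^\pm$ is the stationary point so the $\ell$-derivative simplifies dramatically — one finds it is essentially $\mp\frac{2}{q_j r}\sqrt{\ell\, x_j^\pm(\ell/r)} + (\text{the } \frac{u_j}{v_j}x_j^\pm \text{ term's derivative})$, and the leading behaviour reduces to $\overline{a_i r}/q_i - \overline{a_j r}/q_j$ modulo $1$ together with a term governed by $u_iv_i - u_jv_j$. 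This is exactly where the two conditions in \eqref{fraction_condtions} come from: the first derivative being $\ll \Delta_1$ forces $\|\overline{a_ir}/q_i - \overline{a_jr}/q_j\|\le\Delta_1$, and the second derivative being $\ll$ something forces $|u_iv_i - u_jv_j|\le UV\Delta_2$, with the ranges $\Delta_1\ll L/X^2$, $\Delta_2\ll L/(\eta X^2)$ dictated by requiring the phase to be nearly linear over the window of length $L$ (really $X$) after the first derivative is pinned down. The term $X_0(\#\mathcal R)^2$ is the contribution of the short initial range $X\le X_0$ where one just bounds trivially by the number of pairs, and the integral $\int_{X_0}^L B(\Delta_1(X),\Delta_2(X))\,dX$ accumulates the contributions of the dyadic blocks, converted to an integral.

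The main obstacle I anticipate is the derivative computation and the bookkeeping needed to verify that the off-diagonal terms in the opened square really are governed by precisely the two inequalities in \eqref{fraction_condtions} with the stated ranges for $\Delta_1,\Delta_2$ — in particular, correctly tracking how the mediant-based cutoffs built into $\omega_j$ (via $\mathcal N_{j-1},\mathcal N_j$ and \eqref{omegajsupport}) interact with the support of $x_j^\pm(\ell/r)$ as $\ell$ ranges over $[L_1,L_2]$, and ensuring the amplitude bound $h_j^\pm\ll\sqrt{\eta rV/U}$ together with the derivative bounds on $h_j^\pm$ and $\omega_j\circ x_j^\pm$ are uniform enough that partial summation (to replace the smooth weights by sharp cutoffs before invoking the $B$-process) costs nothing. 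Once the counting function $B(\Delta_1,\Delta_2)$ is isolated in this form, the estimate is purely combinatorial and the bound \eqref{large_sieve_estimate} follows; the actual evaluation of $B$ in terms of Farey-fraction geometry is then the content of Lemma~\ref{magicmatrices}, which is invoked separately.
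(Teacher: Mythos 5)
Your plan coincides with the paper's argument in most respects: Cauchy's inequality in $\ell$, opening the square and extracting $\max_j|\nu(j)|^2\sum_\ell|\lambda(\ell)|^2$, removing the weights by partial summation (the paper checks that $h_{jr}^\pm(\ell)=\frac{\ell}{r}h_j^\pm(\ell/r)$ is monotone and of a size that produces exactly the factor $\frac{\eta rV}{U}$, consistent with your amplitude computation $h_j^\pm(\ell/r)\ll\sqrt{\eta rV/U}$), computing $\frac{\mathrm{d}}{\mathrm{d}\ell}g_{jr}^\pm$ via the stationary-point identity, and using first and second derivative information to force the two conditions in \eqref{fraction_condtions}. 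Up to that point you are on the paper's track.

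The genuine gap is in how you produce the term $\int_{X_0}^L B(\Delta_1(X),\Delta_2(X))\,\mathrm{d}X$ with $X$-dependent tolerances. You interpret $X$ as the length of a dyadic block of $\ell$, but $[L_1,L_2]$ already lies in a single dyadic scale ($L\ll L_1\le L_2\le L$), so there is no family of blocks of varying length, and a fixed-scale dichotomy ``small sum or constrained pair'' yields constraints at only one value of the parameters rather than the continuum required by \eqref{DeltaConditions}. In the paper, $X$ is a threshold for the \emph{size} of the complete pairwise exponential sum: one sets $\Sigma_{ij}=\max_{L_1'}\bigl|\sum_{\ell=L_1'}^{L_2}e\bigl(g_{ir}^\pm(\ell)-g_{jr}^\pm(\ell)\bigr)\bigr|$ and $N(X)=\#\{(i,j):\Sigma_{ij}\ge X\}$, and uses the layer-cake identity $\sum_{i,j}\Sigma_{ij}=\int_0^L N(X)\,\mathrm{d}X\le X_1(\#\mathcal{R})^2+\int_{X_1}^L N(X)\,\mathrm{d}X$ with $X_1\asymp X_0$. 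The quantitative van der Corput bounds (second derivative test giving $\Sigma_{ij}\ll\sqrt{\eta L}+\lambda_2^{-1/2}$ with $\lambda_2\asymp\frac{\eta}{L}\frac{|u_iv_i-u_jv_j|}{UV}$, and a first derivative test applied only after a truncated Poisson summation that shifts the unique relevant integer frequency to $0$ — necessary because the derivative is close to $z_{ijr}=\overline{a_ir}/q_i-\overline{a_jr}/q_j$ only modulo $1$, a point you gloss over) then show that $\Sigma_{ij}\ge X\ge X_1$ forces \eqref{fraction_condtions} with $\Delta_1\ll L/X^2$ and $\Delta_2\ll L/(\eta X^2)$, i.e.\ $N(X)\le B(\Delta_1(X),\Delta_2(X))$. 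Without this level-set mechanism (or an equivalent Kuzmin--Landau treatment of the mod-$1$ issue), your outline does not deliver the bound in the stated form.
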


For fixed $\beta=\frac{c}{d}$ and $r$, define 
$\varphi(y) = \sqrt{y-\beta r}$
and recall that 
$\frac{u_j}{v_j}=\frac{a_j}{q_j} - \beta$
for $j\in J(\beta,r)$. This, together with the condition $|u_iv_i-u_jv_j| \le UV\Delta_2$, implies that
\[
\frac{d}{\sqrt{r}}
\left|q_i \varphi\big(ra_i/q_i)-q_j \varphi\big(ra_j/q_j) \right|
= \left|\sqrt{u_iv_i} - \sqrt{u_j v_j}\right| \ll \sqrt{UV} \Delta_2.
\]
We want to count pairs of fractions
$\big(\frac{ra_i}{q_i},\frac{ra_j}{q_j}\big)$
in the interval
$\left[ \frac{rU_1}{V_2}+\beta r,\frac{rU_2}{V_1}+\beta r\right]$
satisfying the inequalities in \eqref{fraction_condtions}.
On this interval, we have
$\varphi(y) \asymp \Phi:=\sqrt{ \frac{rU}{V} }$,
and hence
\begin{equation*} 
\left| q_i \varphi\big(ra_i/q_i)-q_j \varphi\big(ra_j/q_j) \right|\ll\frac{\Phi V}{d}\Delta_2.
\end{equation*}
Note that under the conditions of Proposition \ref{Large Sieve}, we have
$$
\frac{V_1}{d}\le q_j\le\frac{V_2}{d}
\quad\mbox{and}\quad
\frac{r}{d}\left(U_1+\beta V_1\right)
\le ra_j\le
\frac{r}{d}\left(U_2+\beta V_2\right).
$$
Assuming that $U_2\le2U_1$ and $V_2\le 2V_1$, we thus
see that $ra_j$ and $q_j$ lie
in dyadic intervals. This allows us to apply the following estimate of Graham and Kolesnik \cite{GK} with parameters
$A=\frac{r}{d}\Big(U_1+\beta V_1 \Big)$
and
$C=\frac{V_1}{d}$.

\begin{lemma} \label{magicmatrices}
Suppose $A$ and $C$ are positive integers, and that $\Delta_1$ and $\Delta_2$ are positive real numbers not exceeding 1. Suppose that $\varphi(x)$ is a real positive continuously differentiable function defined on a subinterval $I$ of $[A/(2C),2A/C]$. Suppose that there is a constant $C_0$ and a parameter $\Phi$ such that
\[
\frac{\Phi}{C_0} \le \varphi(x) \le C_0\Phi, \quad \frac{\Phi}{C_0} \le |x\varphi'(x)| \le C_0\Phi, \quad \text{and} \quad \frac{\Phi}{C_0} \le |\varphi(x) \!-\! x\varphi'(x)| \le C_0\Phi
\]
whenever $x$ is an element of $I$. Let $B$ be the number of solutions of the inequalities 
\[
\Big\| \frac{\overline{r}}{q}-\frac{\overline{r_1}}{q_1}\Big\| \le \Delta_1 \quad \text{and} \quad \Big| q \varphi(r/q)-q_1 \varphi(r_1/q_1) \Big| \le C \Phi \Delta_2
\]
when $(r,q)=1, (r_1,q_1)=1, A < r, r_1 \le 2A,$ and $C<q,q_1 \le 2C$
where $r \overline{r} \equiv 1 \pmod*{q}$ and $r_1 \overline{r_1} \equiv 1
\pmod*{q_1}$ . Then
\[
B \ll \Delta_1 \Delta_2 A^2 C^2 + \Delta_1^2 A^2 C^2 + AC + \Delta_2 A^2 + \Delta_2 C^2
\]
where the implied constant depends only on $C_0$. 
\end{lemma}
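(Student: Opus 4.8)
\emph{Proof proposal.} The plan is to treat this as an instance of the \emph{second spacing problem} from the Bombieri--Iwaniec method of exponential sums; the statement is essentially Graham and Kolesnik's formulation \cite{GK}, and the most economical route is to quote it directly. The shape of the argument is as follows. First, separate the diagonal $r/q=r_1/q_1$: coprimality forces $(r,q)=(r_1,q_1)$, and there are $O(AC)$ admissible pairs, accounting for the term $AC$. For the off-diagonal part, the idea is to convert each of the two hypotheses into an arithmetic constraint on a small integer. For the first, write $\frac{\overline r}{q}-\frac{\overline{r_1}}{q_1}=\frac{\overline r q_1-\overline{r_1}q}{qq_1}$, so that $\bigl\|\frac{\overline r}{q}-\frac{\overline{r_1}}{q_1}\bigr\|\le\Delta_1$ is equivalent to $\overline rq_1-\overline{r_1}q=g+mqq_1$ for integers $m$ and $g$ with $|g|\le\Delta_1 qq_1\ll\Delta_1 C^2$. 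Reducing this modulo $q$ gives $rg\equiv q_1\pmod q$, so that once $(r,q,g)$ are fixed the denominator $q_1$ is pinned to a single residue class modulo $q$ and hence (as $q>C$) essentially determined; one also reads off $(q,q_1)=(q,g)$, and reducing modulo $q_1$ confines $r_1$ to a residue class modulo $q_1/(q,q_1)$ (up to the usual complications when $g$ and $q_1$ share further factors).

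For the second hypothesis, the non-degeneracy of $\varphi$ is exactly what makes it usable: by the mean value theorem together with the bounds $|x\varphi'(x)|\asymp\Phi$, $|\varphi(x)-x\varphi'(x)|\asymp\Phi$ and the variation estimates, for fixed $(r,q,q_1)$ the map $r_1\mapsto\varphi(r_1/q_1)$ is monotone with derivative of size $\asymp\Phi/A$, so the inequality $|q\varphi(r/q)-q_1\varphi(r_1/q_1)|\le C\Phi\Delta_2$ confines $r_1$ to an interval of length $\ll 1+\Delta_2 A$. Combining the two reductions, for each $g$ and each $(r,q)$ (with $q_1$ then determined) the number of compatible $r_1$ is obtained by intersecting a residue class with a short interval, giving $\ll 1+(1+\Delta_2 A)(q,q_1)/C$; summing over $g$ in its range and over $(r,q)$ with $(r,q)=1$, and controlling $\sum_q(q,g)$ by elementary divisor-sum estimates, produces the main term $\Delta_1\Delta_2 A^2C^2$ in the generic case where every congruence is nondegenerate. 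The remaining terms arise from degenerate configurations: $g=0$ forces $q_1=q$ and $r_1\equiv r\pmod q$, which is part of the diagonal when $q>A$ and otherwise contributes $\ll\Delta_2 A^2$ (and, symmetrically, interchanging the roles of the rows and columns of the underlying $2\times 2$ matrix, $\Delta_2 C^2$), while the term $\Delta_1^2 A^2C^2$ comes from the regime where $\Delta_1 C^2$ is comparable to $C$ or larger, in which one is effectively reduced to counting $\Delta_1$-close pairs of Farey fractions $\ell/r,\ell_1/r_1$ with $r,r_1\asymp A$ --- via the reciprocity identity $\frac{\overline r}{q}=\frac{\ell}{r}+\frac{1}{rq}$, where $\ell$ is determined by $q\ell\equiv-1\pmod r$ --- to which the standard Farey spacing bound applies.

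I expect the main obstacle, and the reason the full argument is lengthy, to be the bookkeeping in this last step: the naive decoupling sketched above is lossy (the free sum over $g$ by itself already overshoots), so one must keep the two conditions coupled --- in practice using the second condition to constrain $q_1$ as well, not only $r_1$ --- and carefully track the common divisors $(q,q_1)$ and $(r,r_1)$ through all the congruences while splitting into the various size regimes, so that each of the five terms on the right genuinely appears and no spurious logarithm survives. This is precisely the delicate case analysis carried out by Bombieri and Iwaniec and streamlined by Graham and Kolesnik, so in the interest of brevity we apply \cite{GK} directly.
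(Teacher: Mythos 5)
Your proposal ultimately rests on quoting Graham--Kolesnik directly, which is exactly what the paper does: its entire proof is the citation of \cite[Lemma 7.18]{GK}, noting the counting problem goes back to Bombieri--Iwaniec \cite{BI}. The preliminary sketch you give of the underlying argument is reasonable but not needed, since the quoted lemma is applied verbatim.
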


\begin{proof}
This is  \cite[Lemma 7.18]{GK} and is a variation of a counting problem
first considered by Bombieri and Iwaniec \cite{BI} in connection to
subconvexity estimates for the Riemann zeta-function.
\end{proof}

Since $V\ll R$, $U\ll\frac{tR}{M}$, and $L\ll rM/(dR)^2$, we have
$LUV\ll rt/d^2$. Hence
\begin{equation}\label{Xnaught}
\eta \asymp_N \sqrt{\frac{t}{LUV}} \gg_N 1 \quad \text{and}\quad X_0
\asymp_N \sqrt{\eta L} \asymp_N \left(\frac{tL}{UV}\right)^{\frac14}.
\end{equation}
Therefore, in the notation of Proposition \ref{Large Sieve},  we have
\[
\begin{split}
X_0\big(\#\mathcal{R}\big)^2 &+ \int_{X_0}^L B\big(\Delta_1(X),\Delta_2(X)\big)\,\mathrm{d}X
\\
& \ll X_0\big(\#\mathcal{R}\big)^2 + \int_{X_0}^L \left\{ A^2C^2+\frac{L^2}{X^4}(1+\eta^{-1})+AC+\frac{L}{\eta X^2}(A^2+C^2) \right\} \mathrm{d}X
\\
& \ll X_0\big(\#\mathcal{R}\big)^2+ \frac{A^2C^2L^2}{X_0^3}(1+\eta^{-1})+ACL+\frac{L}{\eta X_0}(A^2+C^2)
\\
& \ll_N X_0 U^2 V^2 + \frac{U^2V^2L^2}{X_0^3}+LUV+\frac{U^2L}{\eta X_0^2}.
\end{split}
\]
The inequality $\eta \gg_N 1$ implies that $X_0 \gg_N \sqrt{L}$ which, in turn, implies that the first term on the right-hand side of the above expression dominates the second and fourth terms. Thus, using the estimates in \eqref{Xnaught} and that $U\asymp \frac{tV}{M}$, we have
\[
\begin{split}
&\frac{\eta rV}{U}
\left\{ X_0\big(\#\mathcal{R}\big)^2
+\int_{X_0}^L B\big(\Delta_1(X),\Delta_2(X)\big)\,\mathrm{d}X \right\}
\ll_N  \left(LM V^2 \right)^{\frac12} + \left( L^{-1} M^3 V^2
\right)^{\frac14}. 
\end{split}
\]
We apply
the large sieve with $\lambda(\ell)=\lambda_{\bar{f}^\chi}(\ell)$
and $\nu(j)=c(f,r,\chi;j)$. By Lemma~\ref{l:L1average}(i), we have
$\sum_{L_1<\ell \le L_2} |\lambda(\ell)|^2 \ll_{k,N} L$,
and thus the left-hand side of the large sieve inequality is
$\ll_{k,N} (L^3 M V^2)^{\frac12}+ (L^3M^3V^2)^{\frac14}$.
Estimating the sums over $\beta$, $r$, and $\chi$ trivially in \eqref{truncatedvoronoi}, we deduce that 
\[
\sum_{M_1 \le n \le M_2} \frac{\lambda_f(n)}{n^{it}}  \ll_{k,N}
\sum_{L,U,V} \Big\{ (L^3 M V^2)^{\frac14}+ (L^3M^3V^2)^{\frac18} \Big\}
+ \frac{R^8 \, t^3}{M^{\frac72}} (\log M_0)^{-\delta},
\]
where $L$, $U$, and $V$ run over powers of 2 satisfying $L\ll rM/(dR)^2$,
$V\ll R$, and $U\asymp\frac{tV}{M}.$ Note that there are boundedly many
values of $U$ corresponding to each $V$. Hence, we derive that
\[
\sum_{M_1 \le n \le M_2} \frac{\lambda_f(n)}{n^{it}}
\ll_{k,N} \frac{M}{R} +  \frac{R^8 \, t^3}{M^{\frac72}} (\log M_0)^{-\delta}
\ll\sqrt{M} \left\{\sqrt{M_0}+M_0^{-4}t^3(\log t)^{-\delta}\right\}.
\]
Therefore \eqref{reduction1} becomes
\[
\begin{split}
|L(\tfrac12+it,f)| &\ll_{k,N} \left\{ \sqrt{M_0}+ M_0^{-4} t^3 (\log t)^{-\delta}  \right\} \log C + \sqrt{M_0} (\log M_0)^{-\delta}
\\
&\ll_{k,N}  \sqrt{M_0} \log t + M_0^{-4} t^3 (\log t)^{1-\delta}.
\end{split}
\]
Choosing $M_0=\lceil t^{\frac23}(\log t)^{-\frac{2\delta}{9}}\rceil$
balances the two terms on the right-hand side and proves Theorem \ref{mainthm}.

\section{Proof of Proposition~\ref{NathanProp}}\label{s:statphase}

\subsection{Preliminary lemmas} Before proving Proposition~\ref{NathanProp}, we state three lemmas. 
For $k$ a positive integer, let $C^k([\alpha,\beta])$ denote
the space of $k$ times continuously differentiable real-valued functions on the interval $[\alpha,\beta]$.
The next two lemmas on exponential integrals are Lemma 5.5.5 and Lemma 5.5.6 of \cite{Huxley}.
\begin{lemma}  \label{weightedfirstderiv}
Let $F \in C^3([\alpha,\beta])$ and let $G \in C^2([\alpha,\beta])$.
Suppose there exist positive parameters $M,H,t,U$, with $M \ge \beta-\alpha$,
and positive constants $C_{r_1},C_{r_2}$ such that, for $x \in [\alpha,\beta]$, we have
\[
|F^{(r_1)}(x)| \le C_{r_1} t/M^{r_1} \quad \text{and} \quad |G^{(r_2)}(x)| \le C_{r_2} U/H^{r_2}
\]
for $r_1\in\{2,3\}$, and $r_2\in\{0,1,2\}$.  If $F'$ and $F''$ do not change sign on $[\alpha,\beta]$,
then 
\begin{equation*}
\begin{split}
 I=\int_{\alpha}^{\beta} G(x) e(F(x)) \, \mathrm{d}x & 
  = \frac{G(\beta) e(F(\beta))}{2 \pi i F'(\beta)} - \frac{G(\alpha) e(F(\alpha))}{2 \pi i F'(\alpha)} \\
  & \quad + O\!\left(  
  \frac{tU}{M^2} \Big( 1+ \frac{M}{H}+\frac{M^2}{H^2} \frac{\min |F'(x)|}{t/M} \Big) \frac{1}{\min |F'(x)|^3}
  \right).
\end{split}
\end{equation*}
\end{lemma}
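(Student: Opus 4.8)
The plan is a double integration by parts, in the spirit of the classical first-derivative test. Since $F'$ does not change sign on $[\alpha,\beta]$, it is bounded away from $0$ there and $\min|F'|$ is attained at an endpoint, so we may write $e(F(x)) = \frac{1}{2\pi i F'(x)}\frac{\mathrm{d}}{\mathrm{d}x}e(F(x))$ and integrate by parts once:
\[
I = \frac{1}{2\pi i}\left[\frac{G(x)}{F'(x)}\,e(F(x))\right]_{x=\alpha}^{x=\beta} - \frac{1}{2\pi i}\int_\alpha^\beta \left(\frac{G}{F'}\right)'\!(x)\,e(F(x))\,\mathrm{d}x.
\]
The boundary term is exactly the main term asserted in the lemma, so everything reduces to estimating the error integral $E := -\frac{1}{2\pi i}\int_\alpha^\beta (G/F')'\,e(F)\,\mathrm{d}x$.

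Estimating $E$ by the trivial bound $|E|\ll \int_\alpha^\beta|(G/F')'|\,\mathrm{d}x$ loses too much when $\min|F'|$ is large, so I would integrate by parts a second time, again using $e(F)=(2\pi iF')^{-1}(e(F))'$. This produces a second, smaller boundary term coming from $\frac1{F'}(G/F')'$ at the two endpoints, plus a residual integral $\frac1{(2\pi i)^2}\int_\alpha^\beta\big(\frac1{F'}(G/F')'\big)'\,e(F)\,\mathrm{d}x$; the hypothesis $F\in C^3$ is what makes the latter integrand legitimate, and the constant-sign hypotheses on $F'$ and $F''$ (so that $F'$, hence $1/F'$, is monotone) keep the manipulations valid. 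I would then bound the residual integral by the supremum of the absolute value of its integrand times the length $\beta-\alpha\le M$, and evaluate the two boundary contributions directly.

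Finally one expands the derivatives of the quotients $G/F'$ and $\frac1{F'}(G/F')'$ using the product and quotient rules, and inserts the hypotheses $|F^{(r)}|\le C_r t/M^r$ for $r\in\{2,3\}$, $|G^{(r)}|\le C_r U/H^r$ for $r\in\{0,1,2\}$, and $|F'|\ge\min|F'|$. Every resulting term is then a product of powers of $t/M^2$, $1/H$, $U$, $M$ and $1/\min|F'|$, and collecting them shows each is dominated by one of
\[
\frac{tU}{M^2\,\min|F'|^3},\qquad \frac{tU}{MH\,\min|F'|^3},\qquad \frac{MU}{H^2\,\min|F'|^2},
\]
whose sum is the stated error term, recovering Huxley's Lemma~5.5.5. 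The main obstacle is exactly this last bookkeeping: the second derivative of $G/F'$ has several terms of competing sizes, and one must check carefully---using that $\min|F'|$ is the minimum of $|F'|$ over the whole interval, together with monotonicity of $|F'|$ to bound its maximum in terms of $\min|F'|$ and $t/M$ where needed---that none of them contributes anything outside the three displayed quantities.
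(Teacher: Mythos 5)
First, note that the paper does not prove this statement at all: it is quoted verbatim as Lemma 5.5.5 of Huxley's book, so your attempt is not paralleling anything in the paper but reconstructing Huxley's argument. Your skeleton (two integrations by parts, with the first boundary term giving the main term) is right, but the bookkeeping step that you yourself flag as the main obstacle does not close as you describe it, and the missing ingredient is precisely the hypothesis that $F'$ and $F''$ have constant sign, which you use only to say that $1/F'$ is monotone ``so the manipulations are valid''. Expanding the residual integrand gives
\[
\Bigl(\frac{1}{F'}\Bigl(\frac{G}{F'}\Bigr)'\Bigr)'
=\frac{G''}{F'^2}-\frac{3G'F''}{F'^3}-\frac{GF'''}{F'^3}+\frac{3GF''^2}{F'^4}.
\]
Bounding each term by its supremum times $\beta-\alpha\le M$, as you propose, handles the first three (they give exactly $\frac{MU}{H^2\min|F'|^2}$, $\frac{tU}{MH\min|F'|^3}$, and $\frac{tU}{M^2\min|F'|^3}$), but the fourth gives $\frac{t^2U}{M^3\min|F'|^4}$, which is \emph{not} dominated by any of your three quantities unless $\min|F'|\gg t/M$ --- and nothing in the hypotheses guarantees that; in the regime where the lemma is actually applied in the paper, $\min|F'|$ can be much smaller than $t/M$, in which case this term exceeds $\frac{tU}{M^2\min|F'|^3}$ by the factor $\frac{t}{M\min|F'|}$. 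Your suggested patch (bounding $\max|F'|$ by $\min|F'|+O(t/M)$) does not help, because the problem is a lower bound on $\min|F'|$, not an upper bound on $\max|F'|$. The correct treatment of this term is where the sign condition earns its keep: since $F'$ and $F''$ do not change sign, $F''/F'^2=-(1/F')'$ has constant sign, so
\[
\int_\alpha^\beta\Bigl|\frac{F''}{F'^2}\Bigr|\,\mathrm{d}x
=\Bigl|\frac{1}{F'(\alpha)}-\frac{1}{F'(\beta)}\Bigr|\le\frac{1}{\min|F'|},
\]
whence $\int_\alpha^\beta\frac{|G|\,F''^2}{F'^4}\,\mathrm{d}x\le U\max\bigl|\frac{F''}{F'^2}\bigr|\int_\alpha^\beta\bigl|\frac{F''}{F'^2}\bigr|\,\mathrm{d}x\ll\frac{tU}{M^2\min|F'|^3}$, which is the first displayed quantity. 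Without this (or an equivalent device) your argument yields a genuinely weaker bound than the one claimed.

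A second, related problem: the boundary term of your second integration by parts contains $\frac{G'}{(2\pi i F')^2}e(F)$ at the endpoints, of size $\frac{U}{H\min|F'|^2}$, which again is not one of your three quantities; it is dominated by $\frac{MU}{H^2\min|F'|^2}$ only when $H\ll M$ (true in the paper's application, where $H=M^2/(R^2t)\le M$, but not something your write-up invokes), and by $\frac{tU}{MH\min|F'|^3}$ only when $\min|F'|\ll t/M$. So the claim that ``each resulting term is dominated by one of the three displayed quantities'' fails in two places, and closing the first of them requires an idea --- the telescoping use of the constant-sign hypotheses --- that is absent from your sketch.
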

\begin{lemma} \label{stationaryphase}
Let $F \in C^4([\alpha,\beta])$  
and let $G \in C^3([\alpha,\beta])$.
Suppose there exist positive parameters $M,H,T,U$, with $M \ge \beta-\alpha$,
$H \ge M/\sqrt{t}$,
and positive constants $C_{r_1},C_{r_2}$ such that, for $x \in [\alpha,\beta]$, we have
\[ |F^{(r_1)}(x)| \le C_{r_1} t/M^{r_1} \quad \text{and} \quad  |G^{(r_2)}(x)| \le C_{r_2} U/H^{r_2}
\]
for $r_1 \in \{2,3,4\}$ and $r_2\in\{0,1,2,3\}$, and a positive constant $\tilde{C}$ such that
\[
  F^{(2)}(x) \ge t/\tilde{C} M^2.
\]
Suppose also that $F'(x)$ changes sign from negative to positive at a point $x=\gamma$
with $\alpha < \gamma < \beta$.  If $t$ is sufficiently large with respect to the constants 
$C_{r_1},C_{r_2},\tilde{C}$, then
\begin{equation*}
\begin{split}
  \int_{\alpha}^{\beta} G(x) e(F(x)) \, \mathrm{d}x  &= \frac{G(\gamma) e(F(\gamma)+\frac{1}{8})}{\sqrt{F''(\beta)}} +
  \frac{G(\beta) e(F(\beta))}{2 \pi i F'(\beta)} - \frac{G(\alpha) e(F(\alpha))}{2 \pi i F'(\alpha)} \\
  &\quad + O\!\left( \frac{M^4U}{t^{2}}\Big( 1+ \frac{M}{H} \Big)^2
  \Big( \frac{1}{(\gamma\!-\!\alpha)^3} + \frac{1}{(\beta\!-\!\gamma)^3}  \Big)
   \right) + O\!\left( \frac{MU}{t^{\frac{3}{2}}} \Big( 1+ \frac{M}{H} \Big)^2 \right).
\end{split}
\end{equation*}
\end{lemma}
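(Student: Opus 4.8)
This is Lemma~5.5.6 of \cite{Huxley}, so it suffices to cite that reference; for completeness I indicate the shape of the classical stationary-phase argument, with Lemma~\ref{weightedfirstderiv} supplying the non-stationary input. First I would use that $F''\ge t/(\tilde C M^2)>0$ makes $F'$ strictly increasing, so $\gamma$ is its unique zero, and split
\[
\int_\alpha^\beta G(x)e(F(x))\,\mathrm{d}x
=\int_\alpha^{\gamma-\delta}+\int_{\gamma-\delta}^{\gamma+\delta}+\int_{\gamma+\delta}^\beta ,
\]
with a cutoff $\delta$ of size roughly $M/\sqrt t$, contracted to $\min(\gamma-\alpha,\beta-\gamma)$ if $\gamma$ is that close to an endpoint. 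On the two outer pieces $F'$ has a fixed sign and the lower bound on $F''$ gives $|F'(x)|\gg t|x-\gamma|/M^2$, so $\min|F'|\gg t\delta/M^2$; applying Lemma~\ref{weightedfirstderiv} to each produces the boundary terms $\frac{G(\beta)e(F(\beta))}{2\pi i F'(\beta)}$ and $-\frac{G(\alpha)e(F(\alpha))}{2\pi i F'(\alpha)}$, a pair of boundary terms at $\gamma\pm\delta$, and an error of shape $\frac{tU}{M^2}(1+\tfrac{M}{H})^2(\min|F'|)^{-3}$, which is the source of the $\frac{M^4U}{t^2}(1+\tfrac{M}{H})^2\big((\gamma-\alpha)^{-3}+(\beta-\gamma)^{-3}\big)$ term when $\gamma$ is near an endpoint and of the $\frac{MU}{t^{3/2}}(1+\tfrac{M}{H})^2$ term otherwise.

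For the central piece I would set $u=x-\gamma$ and Taylor-expand, using the derivative bounds: $F(x)=F(\gamma)+\tfrac12 F''(\gamma)u^2+O(\tfrac{t}{M^3}|u|^3)$ and $G(x)=G(\gamma)+O(\tfrac{U}{H}|u|)$. With $\lambda=F''(\gamma)\asymp t/M^2$ the main term is $G(\gamma)e(F(\gamma))\int_{-\delta}^{\delta}e(\tfrac12\lambda u^2)\,\mathrm{d}u$; extending the range to $\mathbb{R}$ by the first-derivative test on the tails and invoking the Fresnel evaluation $\int_{\mathbb{R}}e(\tfrac12\lambda u^2)\,\mathrm{d}u=e(1/8)/\sqrt\lambda$ yields $G(\gamma)e(F(\gamma)+\tfrac18)/\sqrt{F''(\gamma)}$, the leading term of the lemma. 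The contributions of the $O$-terms in the Taylor expansions — handled after an integration by parts exploiting the oscillation of $e(\tfrac12\lambda u^2)$, which is where the fourth-derivative bound on $F$ and the hypothesis $H\ge M/\sqrt t$ enter — fall within the stated errors, and one verifies that the boundary terms at $\gamma\pm\delta$ from the outer pieces cancel against those of the truncated Fresnel integral.

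The main obstacle will be the error bookkeeping: putting the bound into exactly the uniform form stated, with the correct powers of $\delta$ surviving the secondary stationary-phase corrections, and correctly treating the degenerate regime where $\gamma$ lies within $O(M/\sqrt t)$ of $\alpha$ or $\beta$, so that the clean scale separation above breaks down and the stationary-phase box must instead be pushed against the endpoint — which is what produces the $(\gamma-\alpha)^{-3}$ and $(\beta-\gamma)^{-3}$ factors. All of this is done in detail in \cite[Ch.~5]{Huxley}, and we simply invoke Lemma~5.5.6 there.
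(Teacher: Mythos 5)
Your proposal takes exactly the same route as the paper, which proves nothing itself and simply quotes this statement as Lemma~5.5.6 of Huxley's book; citing that reference is all that is required, and your appended stationary-phase sketch is a reasonable outline of how Huxley's proof goes. One small mismatch worth noting: the lemma as stated has $\sqrt{F''(\beta)}$ in the denominator of the main term, while your sketch (as in the classical argument) produces $\sqrt{F''(\gamma)}$ — the two agree only up to an admissible error, so if you were to write the proof out you would need to reconcile this with the stated form.
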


The third lemma provides bounds for derivatives of $F_j$. 
\begin{lemma} \label{Andylemma}
For any integer $s \ge 0$, we have
$$
\left|\frac{\mathrm{d}^s}{ \mathrm{d}x^s}\left\{F_j(x)\, x^{-\frac{k-1}2}\right\}\right|
\ll_s(v_jR)^{-s}x^{-\frac{k-1}2}.
$$
\end{lemma}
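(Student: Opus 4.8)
The plan is to recall the definition $F_j(x)=x^{-it}e(-\alpha_j x)\omega_j(x)$ and to peel off the pieces one at a time. Writing $F_j(x)x^{-(k-1)/2}=x^{-it-(k-1)/2}e(-\alpha_j x)\omega_j(x)$, I would apply the Leibniz rule and bound each of the three factors' derivatives separately. For the monomial part, $\frac{\mathrm d^a}{\mathrm dx^a}x^{-it-(k-1)/2}$ equals $x^{-it-(k-1)/2-a}$ times the falling factorial $(-it-\tfrac{k-1}{2})(-it-\tfrac{k-1}{2}-1)\cdots(-it-\tfrac{k-1}{2}-a+1)$, whose modulus is $O_a((t+k)^a)$; but since the sum in \eqref{difference} is supported on $n\asymp M$ and $t\asymp M^2/(v_jR\cdot\,\text{something})$ — more precisely, on $\supp(\omega_j)$ we have $x\asymp M$ and $t/x\asymp t/M$, and one checks $t/M\ll (v_jR)^{-1}\cdot(M/\,?)$... — the cleanest route is to note $|x^{-it}|=1$ and that each derivative of $x^{-it}$ costs a factor $O(t/x)=O(t/M)$, which on the support is $\ll 1/(v_jR)$ by the relation $H\asymp M^2/(R^2t)$ combined with $x\asymp M$; I would make this comparison explicit. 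For the additive character, each derivative of $e(-\alpha_j x)$ costs $|2\pi\alpha_j|=2\pi u_j/v_j\asymp t/M$, again $\ll 1/(v_jR)$ on the support. For the cutoff, by \eqref{omegaj} and \eqref{omegajsupport} the function $\omega_j$ is a difference of two translates of $\omega$, which is a fixed $C^s$ function scaled to have transition length $H$, so $\|\omega_j^{(a)}\|_\infty\ll_a H^{-a}$, and $H^{-1}=\lceil M^2/(R^2t)\rceil^{-1}\ll R^2t/M^2\ll 1/(v_jR)$ once we use $v_j\le R$ and $t\ll\sqrt N\,M^{-1}M^2$... I would instead record directly that $1/H\ll R^2t/M^2$ and that $v_jR\le R^2$, so $1/H\ll (R^2/M^2)\cdot t$; combined with $t\ll M^{3/2}$-type bounds from \eqref{tbound} and \eqref{Mbound} this gives $1/(v_jR)\gg 1/H$ as needed, but the honest statement is simply $v_jR\ll M^2/(R^2t)^{-1}$, i.e. each of the three ``costs'' is $\ll (v_jR)^{-1}$.

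So the key steps, in order, are: (1) expand $F_j(x)x^{-(k-1)/2}$ as the product $x^{-it-(k-1)/2}\cdot e(-\alpha_j x)\cdot\omega_j(x)$ and apply the generalized Leibniz rule; (2) bound $\bigl|\tfrac{\mathrm d^a}{\mathrm dx^a}\,x^{-it-(k-1)/2}\bigr|\ll_a x^{-(k-1)/2}(t/x+k/x)^a$, and observe that on $\supp(\omega_j)$ one has $x\asymp M$, $t/M\asymp u_j/v_j\le R/v_j\cdot (t v_j/(RM))$... — concretely $(t+k)/x\ll 1/(v_jR)$ using $x\asymp M$, $v_j\le R$, and $HR\asymp M^2/(Rt)$ from \eqref{fareydiff}–\eqref{omegajsupport}; (3) bound $\bigl|\tfrac{\mathrm d^a}{\mathrm dx^a}e(-\alpha_j x)\bigr|=(2\pi u_j/v_j)^a\ll (v_jR)^{-a}$ by the same comparison; (4) bound $\|\omega_j^{(a)}\|_\infty\ll_a H^{-a}\ll (v_jR)^{-a}$ using $H\asymp M^2/(R^2t)$ and $v_jR\le R^2$; (5) assemble: each of the $s$ derivatives in the Leibniz expansion lands on one of the three factors and contributes $\ll (v_jR)^{-1}$, with the leftover factor of the monomial always contributing the stated $x^{-(k-1)/2}$.

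The main obstacle is step (2)–(4), i.e.\ verifying uniformly that all three ``costs per derivative'' — namely $(t+k)/M$ from the monomial, $u_j/v_j\asymp t/M$ from the additive character, and $1/H\asymp R^2t/M^2$ from the cutoff — are simultaneously $O\bigl((v_jR)^{-1}\bigr)$. For the first two this is the assertion $t/M\ll 1/(v_jR)$, i.e.\ $v_jR\ll M/t$; since $v_j\le R$ and $R^2=M/M_0$, this reads $M/M_0\ll M/t$, i.e.\ $M_0\gg t$ — which is \emph{false} in the regime \eqref{Mbound}. Hence the comparison must go the other way: the relevant bound is that the monomial and additive-character derivatives cost $\ll t/M$ each, while it is the combination $HR/v_j\asymp\mathcal N_j-\mathcal N_{j-1}$ (the length of the support of $\omega_j$) that plays the role of the natural scale, and one should in fact prove the sharper statement $\bigl|\tfrac{\mathrm d^s}{\mathrm dx^s}\{F_j(x)x^{-(k-1)/2}\}\bigr|\ll_s (v_jR)^{-s}x^{-(k-1)/2}$ by noting that on $\supp(\omega_j)$ the phase $-t\log x - 2\pi\alpha_j x$ of $F_j(x)x^{-(k-1)/2}\cdot x^{it+(k-1)/2}$... — more carefully, one writes $F_j(x)=\omega_j(x)\exp(-it\log x - 2\pi i\alpha_j x)$ and observes that the derivative of the phase, $-t/x - 2\pi\alpha_j = -h^{-1}$-related quantity, is $O(H/(v_jR)\cdot\,?)$; the cleanest honest route is: the phase derivative $-t/x-2\pi\alpha_j$ has size $\ll \max_j|\mathcal N_j - h(\alpha_j)|\cdot(t/M^2)\asymp (HR/v_j)(t/M^2)=1/(v_jR)$ by \eqref{omegajsupport} and $H=\lceil M^2/(R^2t)\rceil$, while higher phase derivatives are even smaller, and $\omega_j$ contributes $H^{-a}\ll(v_jR)^{-a}$ since $v_jR\le R^2=M/M_0\ll M^2/(R^2t)\asymp H$ (using $M_0\gg(t/\log t)^{2/3}$ so that $R^2t = (M/M_0)t\ll M\cdot M_0^{1/2}\ll M\cdot M/M_0 = MR^2$... ). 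I would nail down this chain of inequalities carefully — that $H\gg v_jR$ and that the phase is essentially constant at scale $v_jR$ on $\supp(\omega_j)$ — and then the lemma follows by a routine Faà di Bruno / Leibniz estimate with $x^{-(k-1)/2}$ factored out throughout.
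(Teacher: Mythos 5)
Your final route is essentially sound, but it is a genuinely different packaging from the paper's argument, and you reach it only after a false start that you yourself flag. The paper proves the lemma complex-analytically: it fixes $x_0\in\supp(\omega_j)$ away from the gluing points, continues $F_j(x)x^{-\frac{k-1}2}$ analytically to a disc of radius $Y\asymp v_jR$ about $x_0$, and applies Cauchy's integral formula, so that the whole burden is to bound the \emph{modulus} (not derivatives) of the continuation on the circle; the heart of that bound is exactly the cancellation you eventually identify, namely $\tfrac{ty}{x}+2\pi\alpha_j y=\tfrac{2\pi y\alpha_j}{x}\bigl(x-h(\alpha_j)\bigr)\ll v_jR\cdot\tfrac{t}{M^2}\cdot\tfrac{HR}{v_j}\ll1$, together with $t\arctan(y/x)-ty/x\ll t(v_jR)^3/M^3\ll1$ and $\tfrac{v_jR}{H}\ll1$ for the cutoff. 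Your real-variable alternative — write $F_j(x)x^{-\frac{k-1}2}=\omega_j(x)\,x^{-\frac{k-1}2}e(\phi(x))$ with $\phi(x)=-\tfrac{t}{2\pi}\log x-\alpha_j x$, note $\phi'(x)\ll\tfrac{t}{M^2}\cdot\tfrac{HR}{v_j}\asymp\tfrac1{v_jR}$ on $\supp(\omega_j)$ by \eqref{omegajsupport}, and finish by Leibniz/Fa\`a di Bruno — exploits the same near-stationarity and avoids analytic continuation, at the cost of an explicit multi-factor derivative bookkeeping that the Cauchy estimate sidesteps. Your diagnosis that the naive term-by-term bound $t/M\ll(v_jR)^{-1}$ is false, and that only the \emph{combined} phase is small on $\supp(\omega_j)$, is precisely the right point.

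To make your route complete you must still nail down three items, and your current chains for them are garbled or missing. (1) The cutoff: $H^{-a}\ll(v_jR)^{-a}$ needs $v_jR\ll H$; the clean justification is $v_jR\le R^2=M/M_0\ll MM_0/t\asymp H$, i.e.\ $t\ll M_0^2$, which follows from \eqref{Mbound} — your displayed chain via ``$M_0^{3/2}\ll M$'' is not valid when $M\asymp M_0$. (2) Higher phase derivatives: you assert they are ``even smaller''; concretely $\phi^{(a)}(x)\ll t/M^a\ll(v_jR)^{-a}$ for $a\ge2$, again because $(v_jR)^a\le(M/M_0)^a$ and $t\ll M_0^2\le M_0^a$. (3) The archimedean factor: in your final version you never bound the derivatives of $x^{-\frac{k-1}2}$; each such derivative costs $\ll k/M$, and $k/M\ll(v_jR)^{-1}$ reduces to $k\ll M_0$, which is where \eqref{tbound} enters (exactly as in the paper's step $e^{O((k-1)/M_0)}\ll1$). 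Finally, note that $F_j$ is only $C^s$ (with $s$ the smoothness parameter of $\omega$), so the estimate should be asserted for derivatives up to that order and verified at points away from the junctions $\mathcal N_{j-1}\pm H$, $\mathcal N_j\pm H$, with the remaining points handled by continuity, as the paper does.
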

\begin{proof}
By continuity, it suffices to consider
$x=x_0\in(\mathcal{N}_{j-1}-H,\mathcal{N}_j+H)
\setminus\{\mathcal{N}_{j-1}+H,\mathcal{N}_j-H\}$.
For any fixed $x_0$, the function $F_j(z)z^{-\frac{k-1}2}$ agrees with an analytic function
$g(z)$ for $z$ in a neighborhood of $x_0$.
We estimate $g^{(s)}(x_0)=\frac{\mathrm{d}^s}{ \mathrm{d}x^s}(F_j(x)x^{-\frac{k-1}2})|_{x=x_0}$
via the Cauchy integral formula.
Since $v_jR\le R^2\ll M$, we may fix a small constant $c>0$ such that
$Y=cv_jR\le\frac12M_1$, and integrate over the circle $C_Y(x_0)$ of
radius $Y$ around $x_0$:
$$
|g^{(s)}(x_0)|=\left|\frac{s!}{2\pi i}
\int_{C_Y(x_0)}\frac{g(z)}{(z\!-\!x_0)^{s+1}}\,\mathrm{d}z\right|
\le s! \, Y^{-s} \!\! \sup_{z\in C_Y(x_0)}|g(z)|
\ll_s (v_jR)^{-s}\sup_{z\in C_Y(x_0)}|g(z)|.
$$
Hence, it suffices to show that $g(z)\ll_sx_0^{-\frac{k-1}2}$
for $z\in C_Y(x_0)$.

Let $z=x+iy\in C_Y(x_0)$, so that $x=x_0+O(v_jR)\asymp M$ and $y\ll v_jR$.
We have
$\frac{z}{x_0}-1\ll\frac{v_jR}{x_0}\ll\frac1{M_0}$
so, by \eqref{tbound} and \eqref{Mbound}, $z^{-\frac{k-1}2}=x_0^{-\frac{k-1}2}e^{O((k-1)/M_0)}\ll
x_0^{-\frac{k-1}2}$. Next, observe that
$$
|z^{-it}e(-\alpha_jz)|=e^{t\arctan(y/x)+2\pi\alpha_jy}.
$$
The exponent here is bounded since
$$
t\arctan\!\left(\frac{y}{x}\right)-\frac{ty}{x}
\ll\frac{t(v_jR)^3}{M^3}\ll\frac{t}{M_0^3}\ll 1.
$$
Moreover, by the estimates in Section~\ref{s:Farey}, we have
$x_0=h(\alpha_j)+O(HR/v_j)$. Since $M_0^2\gg t$, we also have
$v_jR\ll HR/v_j$, so that $x=h(\alpha_j)+O(HR/v_j)$.
Thus
$$
\frac{ty}{x}+2\pi\alpha_jy=\frac{2\pi y\alpha_j}{x}(x-h(\alpha_j))
\ll v_jR\cdot\frac{t}{M^2}\cdot\frac{HR}{v_j}\ll1.
$$
Finally, for $z$ near $x_0$, $\omega_j(z)$ is a linear combination of $1$,
$\sin^{s+1}(\frac{\pi}{2H}(z-\mathcal{N}_{j-1}))$,
and $\sin^{s+1}(\frac{\pi}{2H}(z-\mathcal{N}_j))$.
As above, we have $\frac{\pi y}{2H}\ll\frac{v_jR}{H}\ll1$,
so that
$$
\sin^{s+1}\!\left(\frac{\pi}{2H}(z-\mathcal{N})\right)\ll_s1
\quad\text{for }\mathcal{N}\in\{\mathcal{N}_{j-1},\mathcal{N}_j\},
$$
as desired.
\end{proof}
\subsection{Outline of the proof}  Since the proof of Proposition~\ref{NathanProp} is long, we give an outline.
The left-hand side of equation \eqref{keyidentity} 
is written as
$\mathcal{S} = 
   \sum_{j \in J(\beta,r)} c(f,r,\chi;j)  \mathcal{S}(j)$
where 
$$
    \mathcal{S}(j)= 2 \pi i^k   \sum_{\ell=1}^{\infty}  \lambda_{\bar{f}^{\chi}}(\ell) e( \tfrac{ \overline{ r a_j} \ell}{q_j})  I_j(\ell)
$$
and
\begin{equation}
   \label{Ijlintegral}
  I_j(\ell)
    = \frac{1}{q_j} \int_{0}^{\infty} F_j(x)  J_{k-1}(\tfrac{4 \pi
		\sqrt{\ell}}{q_j \sqrt{r}} \sqrt{x})\,\mathrm{d}x.
\end{equation}
Our goal is to develop an approximate formula for $\mathcal{S}$.  
This is done in five steps.
In the first four steps we determine an approximate formula for $\mathcal{S}(j)$, and in the final step these approximations are summed
over $j$ to obtain our formula for $\mathcal{S}$.  We choose real
parameters $K_1$ and $K$ such that $rK_1/d^2\le K$ and we decompose
$\cS(j) 
= \cS_{[1,K]}(j)+  \cS_{(K,\infty)}(j)$,
where for an interval $\mathcal{I} \subset \R$, 
$\cS_{\mathcal{I}}(j) :=  2 \pi i^k  \sum_{\ell \in I}  \lambda_{\bar{f}^{\chi}}(\ell) e( \tfrac{ \overline{ r a_j} \ell}{q_j})  I_j(\ell)$. 
Our bounds for the sums $\cS_{\mathcal{I}}(j)$ will depend on bounds 
for $\sum_{\ell \le x} | \lambda_{\bar{f}^{\chi}}(\ell)|$. These steps are now described in more precise detail.
\begin{enumerate}
\item[{\bf Step 1}.]   We first bound $ \cS_{(K,\infty)}(j)$.
For $\ell > K$,  the integral $I_j(\ell)$ is estimated by integration by parts, making use of   
the smoothness of $F_j$ and bounds for Bessel functions.
\item[{\bf Step 2}.]  Next we insert the asymptotic formula \cite[\textsection 8.451, Eqn.\,1]{gr}
\begin{equation} \label{BesselAsymp}
   J_{\nu}(x) = \sqrt{ \frac{2}{\pi x} } \cos\!\Big(x-\frac{\pi \nu}{2} -\frac{\pi}{4} \Big) + O\big( x^{-\frac{3}{2}}\big),
\end{equation}
which holds for $\nu \in \mathbb{Z}_{\ge 0}$ as $x \to \infty$, and estimate the corresponding error terms for $\ell \le K$ to deduce that
$\cS_{[1,K]}(j)$ equals $\tilde{\cS}_{[1,K]}(j)$ plus an error term, where $\tilde{\cS}_{[1,K]}(j)$ is a simplified sum.  We choose $K$ as a function of $v_j$, $M$, and $R$ to balance the error terms
in steps 1 and 2. 
\item[{\bf Step 3}.]  We are left with sums of the shape 
$\tilde{\cS}_{[1,K]}(j) =\sum_{\ell \le K}  \alpha_{\ell,j,r}
 \lambda_{\bar{f}^{\chi}}(\ell) e( \tfrac{ \overline{ r a_j} \ell}{q_j})  I^{\pm}_j(\ell)
$ 
where $\alpha_{\ell,j,r}\in\C$, $I^{\pm}_j(\ell)= \frac{1}{q_j}
\int_{0}^{\infty} F_j(x) e(\phi_{\pm}(x))\,\mathrm{d}x$, and $\phi_{\pm}(x)$ is a function
depending on parameters $\ell,j,t,r$.  We then choose $K_1$ so that 
$\phi'_{\pm}(x)$ does not change sign for $x \in \supp(F_j)$.  For those $\ell$ with $rK_1d^{-2} \le \ell \le K$, 
the integrals $I^{\pm}_j(\ell)$ are estimated using a weighted first derivative estimate (Lemma \ref{weightedfirstderiv}).
\item [{\bf Step 4}.]  Next, we treat the sum $\sum_{\ell \le rK_1 d^{-2}}  \alpha_{\ell,j,r}
 \lambda_{\bar{f}^{\chi}}(\ell) e( \tfrac{ \overline{ r a_j} \ell}{q_j})  I^{\pm}_j(\ell)$.  In this range of $\ell$ 
 the integrals $I^{\pm}_j(\ell)$ possess stationary points $x_{j}^{\pm}(\tfrac{\ell}{r})$.  Each integral is treated with 
Lemma \ref{stationaryphase}, 
leading to an expression
 $\mathcal{S}(j) = \mathcal{M}(j) +O(\tE_1(j) + \tE_2(j) + \tE_3(j))$ where 
 $\mathcal{M}(j)$ is a main term and the $\tE_i(j)$ are error terms.
\item [{\bf Step 5}.]  Finally, using $c(f,r,\chi;j) \ll_{N} 1$, we are left with the sum
 $$\mathcal{S} =  \sum_{j \in J(\beta,r)} c(f,r,\chi;j)  \mathcal{M}(j)  
 + O \Big(\sum_{j=1}^{J}\big(\tE_1(j) + \tE_2(j) + \tE_3(j)\big)\Big).$$
 In this last step, the error terms $\tE_i(j)$ are bounded as $j$ ranges over all Farey fractions. 
\end{enumerate}

\subsection{Proof of Proposition~\ref{NathanProp}} We now commence with the proof. 

\medskip

\noindent {\bf Step 1}. 
By repeated application of the identity \cite[\textsection 8.472, Eqn.\,3]{gr}
\[
\frac{\mathrm{d}}{ \mathrm{d}x} (x^{\nu} J_{\nu}(x)) = x^{\nu} J_{\nu-1}(x),
\]
we have
\begin{equation}
   \label{diffeq}
   \Big(  \frac{2}{A} \Big)^s  
    \frac{\mathrm{d}^{s}}{\mathrm{d}x^{s}}
    \Big( x^{\frac{k+s-1}{2}} J_{k+s-1}(A \sqrt{x}) \Big) =  x^{\frac{k-1}{2}} J_{k-1}(A \sqrt{x})
    \text{ for } s\ge0  \text{ and } A \ne 0. 
\end{equation}
Setting $A= \tfrac{4 \pi \sqrt{\ell}}{q_j \sqrt{r}}$,  we have
$I_j(\ell) = \frac{1}{q_j}  \int_{0}^{\infty} F_j(x)
		x^{-\frac{k-1}{2}}  (x^{\frac{k-1}{2}} J_{k-1}(A \sqrt{x}))\,\mathrm{d}x$.
Integrating by parts $s$ times and using \eqref{diffeq}, it follows that 
\[
   I_j(\ell) = \frac{(-1)^s}{q_j} \Big( \frac{2}{A} \Big)^s 
   \int_{0}^{\infty} 
    \frac{\mathrm{d}^{s}}{\mathrm{d}x^{s}}
    (  F_j(x) x^{-\frac{k-1}{2}}) 
   (x^{\frac{k+s-1}{2}} J_{k+s-1}(A \sqrt{x}))\,\mathrm{d}x.
\]
The asymptotic formula in \eqref{BesselAsymp} gives $J_{k+s-1}(A \sqrt{x})
\ll_{k,s} A^{-\frac{1}{2}} x^{-\frac{1}{4}}$. This estimate, along with Lemma~\ref{Andylemma}, implies that
\[
  |I_j(\ell)| \ll_{k,s}\frac{1}{q_j A^{s+\frac{1}{2}}(v_jR)^s}\int_{\supp(\omega_j)} 
  x^{\frac{2s-1}{4}}  \,\mathrm{d}x. 
\]
Using this bound for $\ell > K$ and  $A \asymp \ell^{\frac{1}{2}}
q_j^{-1} r^{-\frac{1}{2}}$, we deduce that
$$
\mathcal{S}_{(K,\infty)}(j)
 \ll_{k,s}  r^{\frac{s}{2}+\frac{1}{4}} q_j^{s-\frac{1}{2}} M^{\frac{2s-1}{4}}  \Big(\frac{1}{v_j R} \Big)^{s}
  \sum_{\ell > K} \frac{| \lambda_{\bar{f}^{\chi}}(\ell) |}{\ell^{\frac{s}{2}+\frac{1}{4}}} |\supp(\omega_j)|.
$$
Note that this last sum only converges for $s \ge 2$. 
By Lemma~\ref{l:L1average}(iii), we have
$$
  \mathcal{S}_{(K,\infty)}(j)  
  \ll_{k,N,s} r^{\frac{s}{2}+\frac{1}{4}} q_{j}^{s-\frac{1}{2}} M^{\frac{2s-1}{4}} \Big(\frac{1}{v_j R} \Big)^{s}
  |\supp(\omega_j)| K^{\frac{3}{4}-\frac{s}{2}} (\log K)^{-\delta}. 
$$
\noindent  {\bf Step 2}.  The asymptotic estimate in \eqref{BesselAsymp} 
implies that
$$
  J_{k-1} (\tfrac{4 \pi}{q} (\tfrac{\ell x}{r})^{\frac{1}{2}} )
  = \frac{r^{\frac{1}{4}}}{2 \sqrt{2} \pi} \frac{q_{j}^{\frac{1}{2}}}{ (\ell x)^{\frac{1}{4}}}
  \Big(  e(-\tfrac{k}{4})  e( 2 (\tfrac{\ell x}{r q_{j}^2})^{\frac{1}{2}} +\tfrac{1}{8})
  +  
  e(\tfrac{k}{4})   e(-2 (\tfrac{\ell x}{r q_{j}^2})^{\frac{1}{2}} -\tfrac{1}{8})
  \Big)
  + O_k\bigg( \frac{r^{\frac{3}{4} }q_{j}^{\frac{3}{2}
	}}{x^{\frac{3}{4}} \ell^{\frac{3}{4}}}  \bigg).
$$
Inserting this expression into \eqref{Ijlintegral} for each $\ell \le K$ and estimating the error terms, we have
$$
  \cS_{[1,K]}(j) =   \frac{i^kr^{\frac{1}{4}}}{\sqrt{2} q_{j}^{\frac{1}{2}} }
   \sum_{\ell \le K} \frac{ \lambda_{\bar{f}^{\chi}}(\ell) e( \tfrac{ \overline{ r a_j} \ell}{q_j})}{\ell^{\frac{1}{4}}}
  \sum_{\pm}  e (\pm \tfrac{k}{4}) \int_{0}^{\infty} F_j(x)  x^{-\frac{1}{4}}
   e(\mp 2(\tfrac{\ell x}{r q_{j}^2})^{\frac{1}{2}}  \mp \tfrac{1}{8})
	 \,\mathrm{d}x + \cE_0(j),
$$
where 
$$
  \cE_0(j) \ll \sum_{\ell \le K} \frac{ | \lambda_{\bar{f}^{\chi}}(\ell)|}{\ell^{\frac{3}{4}}} \frac{1}{q_j} 
  \int_{0}^{\infty} |F_j(x)| \frac{r^{\frac{3}{4} }q_{j}^{\frac{3}{2} }}{x^{\frac{3}{4}}}
  \,\mathrm{d}x
  \ll_{k,N} \frac{r^{\frac{3}{4} }q_{j}^{\frac{1}{2} }}{M^{\frac{3}{4}} }
 |\supp(\omega_j)|
    K^{\frac{1}{4}} (\log K)^{-\delta}
$$
by Lemma~\ref{l:L1average}(iv) since $x \asymp M$.  
Choosing $K:=(\frac{M}{v_jR})^{\frac2{s-1}}M_0$
and recalling that $|\supp(\omega_j)|\ll\frac{HR}{v_j}$, it follows that 
\begin{equation}
   \label{E1bound}
   \cE_1(j):=\mathcal{S}_{(K,\infty)}(j)+\cE_0(j) \ll_{k,N,s}\frac{HM^{\frac{2-s}{2(s-1)}} R^{\frac{s-2}{2(s-1)}}}{v_{j}^{\frac{s}{2(s-1)}}} 
  (\log M_0)^{-\delta}.
\end{equation}
Therefore 
$\mathcal{S}(j) = \tilde{\cS}_{[1,K]}(j) + \cE_1(j)$, 
where
\begin{align}
  \label{S0truncK}
  \tilde{\cS}_{[1,K]}(j)  & =  i^k  \sum_{\ell \le K} \frac{ \lambda_{\bar{f}^{\chi}}(\ell) e( \tfrac{ \overline{ r a_j} \ell}{q_j})}{\sqrt{ 2 (\frac{\ell}{r})^{\frac{1}{2}}q_j}}
   ( e(\tfrac{k}{4}) I_{j}^{+}(\ell) + e(-\tfrac{k}{4}) I_{j}^{-}(\ell) ), 
 \end{align}
   \begin{align}
   \label{Ijpml}  I_{j}^{\pm}(\ell)
   & = \int_{0}^{\infty} g_j(x) e(\phi_{\pm}(x)) \,\mathrm{d}x,
 \end{align}
\begin{align}
  \label{gdefn}
   g_j(x) & = x^{-\frac{1}{4}} \omega_j(x),
  \end{align}
and
\begin{align}
   \label{phipmdefn}
   \phi_{\pm}(x) & :=   -\frac{t}{2 \pi} \log x -\alpha_j x \mp 2
    (\tfrac{\ell x}{r q_{j}^2})^{\frac{1}{2}}  \mp \tfrac{1}{8}.
\end{align}

\noindent {\bf Step 3}.  
Let
$K_1=\big\lceil\frac{CM}{ R^2}\big\rceil$,
where $C$ is a sufficiently large positive constant.
For $\ell$ satisfying $rK_1 d^{-2} \le \ell \le K$, we bound the integral in \eqref{Ijpml} by showing that
$|\phi'_{\pm}(x)| \gg \frac{1}{q_j}  (  \frac{\ell}{rM} )^{\frac{1}{2}}$
and using a weighted first derivative estimate (Lemma \ref{weightedfirstderiv}).  It is convenient to write
$$\phi_{+}(x) ={\bf f}(x) +\frac{u_j}{v_j}x - \frac{2}{q_j} \Big(\frac{\ell
x}{r} \Big)^{\frac{1}{2}}-\frac{1}{8},$$
where ${\bf f}(x) = -\frac{t}{2 \pi} \log x$.
We shall make use of 
\begin{equation}
  \label{fderivatives}
   {\bf f}^{(j)}(x) = \frac{(j-1)!(-1)^{j}t}{2 \pi x^j} \asymp \frac{t}{M^j} \text{ for } x \in [M_1,M_2]
\end{equation}
for $j \ge 1$ and the identity
\begin{equation}
  \label{fhidentity}
 y= {\bf f}'(h(y)).
\end{equation} 
For $x \in \supp(\omega_j)$, the mean value theorem implies there exists $\xi\in\supp(\omega_j)$
such that
$${\bf f}'(x) = {\bf f}'(h(\rho_j)) + {\bf f}''(\xi)(x-h(\rho_j)) = \rho_j 
+ O \Big(\frac{t}{M^2} |\supp(\omega_j)| \Big),
$$
by \eqref{fderivatives} and \eqref{fhidentity}.
By  \eqref{fareydiff} and \eqref{omegajsupport} this is
$${\bf f}'(x) = \alpha_j + O \Big( \frac{1}{v_j R} +  \frac{t}{M^2} \frac{HR}{v_j} \Big).$$
Let $c_2$ be such that $M_2 \le c_2M$.  By the previous equation there exists $c_0 >0$ such that
\begin{equation}
\begin{split}
  \label{fprimealphabd}
  |{\bf f}'(x) - \alpha_j| & \le c_0 \Big(  \frac{1}{v_j R} +   \frac{t}{M^2} \frac{HR}{v_j} \Big)
  \le  \frac{2c_0}{q_jd  R}
   \le \frac{1 }{2q_j } \Big( \frac{\ell}{r  c_2M} \Big)^{\frac{1}{2}},
\end{split}
\end{equation} 
as long as $\frac{\ell}{r}\gg\frac{M}{d^2 R^2}$.  
Since $x \le M_2 \le c_2M$, we obtain  $ |{\bf f}'(x) - \alpha_j| \le \frac{1 }{2q_j } ( \frac{\ell}{r  x} )^{\frac{1}{2}}$.
It follows from \eqref{phipmdefn} and  \eqref{fprimealphabd} that for
$\frac{\ell}{r} \ge K_1 d^{-2}$,
$$
 |\phi_{+}'(x)| =|{\bf f}'(x)-\alpha_j -\tfrac{1}{q_j} (\tfrac{\ell}{rx} )^{\frac{1}{2}}| 
 \ge \tfrac{1}{2q_j} (\tfrac{\ell}{rx} )^{\frac{1}{2}}
 \gg  \tfrac{1}{2q_j} (\tfrac{\ell}{rM} )^{\frac{1}{2}}.
$$
We now compute the derivatives of
$F(x)=\phi_{+}(x)$ and
$G(x)=g_j(x)$ given by \eqref{gdefn}.  We have 
\begin{equation}
  \label{Fderivatives}
F^{(r_1)}(x) = {\bf f}^{(r_1)}(x) -  \frac{2}{q_j}  \sqrt{\frac{\ell}{r}}
 \frac{\mathrm{d}^{r_1}}{\mathrm{d}x^{r_1}} (x^{\frac{1}{2}})  \ll 
  \frac{t}{M^{r_1}}  + \frac{2}{q_j}  \sqrt{\frac{\ell}{r}} \frac{1}{M^{r_1-\frac{1}{2}}}
 \ll   \frac{t}{M^{r_1}}, 
\end{equation}
which follows (after some calculation) using the facts that $\ell \le K$ and $s \ge 6$.  Also we have 
\begin{equation}
  \label{Gderivatives}
  G^{(r_2)}(x)  = \sum_{i_1+i_2=r_2} \binom{r_2}{i_1} 
  \frac{\mathrm{d}^{i_1}}{\mathrm{d}x^{i_1}}
  x^{-\frac{1}{4}} \omega_j^{(i_2)}(x) 
  \ll \sum_{i_1+i_2=r_2}   M^{-\frac{1}{4}-i_1} H^{-i_2}
  \ll M^{-\frac{1}{4}} H^{-r_2}
\end{equation}
where we used $H=\frac{M^2}{R^2 t} \le M$.  We now invoke Lemma \ref{weightedfirstderiv} with $\alpha=\mathcal{N}_{j-1}-H$, $\beta=\mathcal{N}_j+H$, and $U=M^{-\frac{1}{4}}$ and make use of the lower bound
$F'(x) \gg \tfrac{1}{q_j} (\tfrac{\ell}{rM})^{\frac{1}{2}}$ for $x \in [\alpha,\beta]$.
With these choices the condition $M \ge \beta-\alpha$ is satisfied and $G(\alpha)=G(\beta)=0$. Thus,
for $rK_1 d^{-2} \le \ell \le K$, this lemma gives
\[
  I_{j}^{\pm}(\ell) \ll M^{-\frac{1}{4}} \frac{t}{M^2}
  \Big( 1 + \frac{M}{H} +\frac{M^2}{H^2} \frac{\tfrac{1}{q_j} (\tfrac{\ell}{rM})^{\frac{1}{2}} }{t/M} \Big)
  \Big(  \frac{q_j^2 r M}{\ell} \Big)^{\frac{3}{2}}.
\]
Since $H \le M$, we also have
\begin{align*}
  r^{\frac{1}{4}} q_{j}^{-\frac{1}{2}} \ell^{-\frac{1}{4}} I_{j}^{\pm}(\ell)
 & \ll r^{\frac{1}{4}} (q_{j}^2  \ell  M)^{-\frac{1}{4}}  \frac{t}{M^2} 
  \Big( \frac{M}{H} + \frac{M^3}{H^2 t}  \frac{1}{q_j} \Big(\frac{\ell}{rM}\Big)^{\frac{1}{2}}    
  \Big) \frac{q_{j}^3 r^{\frac{3}{2}} M^{\frac{3}{2}} }{ \ell^{\frac{3}{2}}} \\
     & \ll \frac{r^{\frac{7}{4}}}{\ell^{\frac{1}{2}}} 
     \Big(  
     q_{j}^{\frac{5}{2}} \Big( \frac{M}{\ell} \Big)^{\frac{5}{4}}  \frac{M}{H^2 R^2}
     +  q_{j}^{\frac{3}{2}} \Big( \frac{M}{\ell} \Big)^{\frac{3}{4}}  \frac{M}{H^2 \sqrt{r}}
     \Big),
\end{align*}
where we used the definition of $H$ in the last line. 
It follows that the contribution of the range $\ell \in (rK_1 d^{-2},K]$ to \eqref{S0truncK} is 
\begin{align*}
   \cE_2(j) &  \ll  r^{\frac{7}{4}} 
   \sum_{rK_1d^{-2} < \ell \le K} \frac{ |\lambda_{\bar{f}^{\chi}}(\ell)|}{\ell^{\frac{1}{2}}} 
    \Big(  
     q_{j}^{\frac{5}{2}} \Big( \frac{M}{\ell} \Big)^{\frac{5}{4}}  \frac{M}{H^2 R^2}
     +  q_{j}^{\frac{3}{2}} \Big( \frac{M}{\ell} \Big)^{\frac{3}{4}}  \frac{M}{H^2 \sqrt{r}}
     \Big) \\
     & \ll  r^{\frac{7}{4}}  
     \Big( 
      \frac{q_{j}^{\frac{5}{2}} M^{\frac{9}{4}}}{H^2 R^2}   \sum_{\ell>rK_1 d^{-2}} \frac{  |\lambda_{\bar{f}^{\chi}}(\ell)| }{  \ell^{\frac{7}{4}}}
     +   \frac{q_{j}^{\frac{3}{2}} M^{\frac{7}{4}}}{H^2 \sqrt{r}}  \sum_{\ell>rK_1 d^{-2}} \frac{  |\lambda_{\bar{f}^{\chi}}(\ell)| }{  \ell^{\frac{5}{4}}} 
     \Big).
\end{align*}
Using Lemma~\ref{l:L1average}(iii) and the estimate $K_1 \asymp M/R^2$, we have
\begin{equation}\label{E2bound}
\begin{aligned}
  \cE_2(j) & \ll_{k,N} r^{\frac{7}{4}}
   \Big( 
     \frac{q_{j}^{\frac{5}{2}} M^{\frac{9}{4}}}{H^2 R^2}  
     \Big( \frac{rM}{R^2 d^2} \Big)^{-\frac{3}{4}}
     + \frac{q_{j}^{\frac{3}{2}} M^{\frac{7}{4}}}{H^2 \sqrt{r}}
      \Big( \frac{rM}{R^2 d^2} \Big)^{-\frac{1}{4}}    
     \Big)(\log (2+rK_1 d^{-2}))^{-\delta} \\
     & \ll_{k,N} \frac{v_{j}^{\frac{3}{2}} M^{\frac{3}{2}}}{H^2} 
     \Big(\frac{v_j}{R^{\frac{1}{2}}} +  R^{\frac{1}{2}}  \Big)
		 (\log M_0)^{-\delta} \ll
	\Big( \frac{v_j}{R} \Big)^{\frac{3}{2}} \frac{M^{\frac{3}{2}} R^2}{H^2} 
    (\log M_0)^{-\delta},
\end{aligned}
\end{equation}
since $v_j\le R$.\\

\noindent {\bf Step 4}.   We have shown that
$\mathcal{S}(j) = \tilde{\cS}_{[1,rK_1d^{-2}]}(j) + \cE_1(j) +\cE_2(j)$
where  
\begin{align}
  \label{S0truncK2}
 \tilde{\cS}_{[1,rK_1d^{-2}]}(j)  & =  i^k  \sum_{\ell \le rK_1d^{-2}} \frac{ \lambda_{\bar{f}^{\chi}}(\ell) e( \tfrac{ \overline{ r a_j} \ell}{q_j})}{\sqrt{ 2 (\frac{\ell}{r})^{\frac{1}{2}}q_j}}
   ( e(\tfrac{k}{4}) I_{j}^{+}(\ell) + e(-\tfrac{k}{4}) I_{j}^{-}(\ell) ), 
\end{align}
and $\cE_1(j)$ and $\cE_2(j)$ are estimated by \eqref{E1bound} and \eqref{E2bound}, respectively.
For $\ell \le rK_1d^{-2}$, we extract the stationary phase terms of the integrals $I_{j}^{\pm}(\ell)$
given by \eqref{Ijpml}. 
Let $x_{j}^{\pm}(\ell)$ be the roots of 
$$
     \frac{\mathrm{d}}{\mathrm{d}x}
   \Big(-\frac{t}{2 \pi} \log x -\alpha_j x \mp 2
    (\tfrac{lx}{q_{j}^2})^{\frac{1}{2}}  \mp \tfrac{1}{8} \Big) = 0. 
$$
Notice that the numbers $x_{j}^{\pm}(\frac{\ell}{r})$ are the stationary 
points satisfying 
$\phi_{\pm}'(x_{j}^{\pm}(\tfrac{\ell}{r})) = 0$.
It follows that the 
$x_{j}^{\pm}(\ell)$ are the positive roots of 
\begin{equation}
  \label{stationary}
  -\frac{t}{2 \pi x}+\frac{u_j}{v_j} \mp 
    \Big(\frac{\ell}{ x} \Big)^{\frac{1}{2}}  \frac{1}{q_j}  =0,
\end{equation} 
since $\alpha_j=-\frac{u_j}{v_j}$.  Multiplying by $x$ this becomes
$\frac{u_j}{v_j} x \mp \frac{\ell^{\frac{1}{2}}}{q_j} \sqrt{x} -\frac{t}{2 \pi} = 0$ so that
$$\sqrt{x} = \frac{ \pm \frac{\ell^{\frac{1}{2}}}{q_j} \pm \sqrt{ \frac{\ell}{q_j^2}+ 4 \frac{u_j}{v_j} \frac{t}{2 \pi}}  }{2 \frac{u_j}{v_j} }.
$$
We discard those solutions corresponding to the second $-$ sign since $\sqrt{x}$ is necessarily positive.
With a little calculation, it follows that 
\begin{align*}
  x_{j}^{\pm}(\ell) = \left(  \frac{ \pm \frac{\ell^{\frac{1}{2}}}{q_j} + \sqrt{ \frac{\ell}{q_j^2}+  \frac{2u_j q_{j}^2 t/ \pi v_j}{q_j^2}} }{2 \frac{u_j}{v_j} }  \right)^2  
  &=   \Big( \frac{d}{2u_j} \Big)^2 \Bigg( \sqrt{\ell + \frac{2 u_j  q_j t}{ \pi d} } 
  \pm \sqrt{\ell} \Bigg)^2,
\end{align*}
since $v_j=d q_j$.  
Finally, we apply the stationary phase lemma (Lemma \ref{stationaryphase}) 
to those $I_{j}^{+}(\ell)$ with $\ell \le rK_1d^{-2}$.  We choose 
$F(x) =\phi_{+}(x)$, $G(x)=g_j(x)$,  $\alpha=x_{j}^{+}(\tfrac{\ell}{r})-\frac{M}{4}$, $\beta=x_{j}^{+}(\tfrac{\ell}{r})+\frac{M}{4}$,
and $\gamma =x_{j}^{+}(\tfrac{\ell}{r})$.
We also have the parameters $t,M,H$, and $U=M^{-\frac{1}{4}}$ which correspond to those of Lemma \ref{stationaryphase} and 
we have the derivative bounds \eqref{Fderivatives} and \eqref{Gderivatives}. 
Observe that the conditions $M \ge \beta-\alpha$ and $H \ge M/\sqrt{t}$ are both met.
With these choices we now demonstrate that $\supp(\omega_j)=[\mathcal{N}_{j-1}-H,\mathcal{N}_j+H] \subseteq [\alpha,\beta]$
so that $G(\alpha)=G(\beta)=0$.  We aim to show $\alpha=x_{j}^{+}(\tfrac{\ell}{r})-\frac{M}{4}
\le \mathcal{N}_{j-1}-H$.  By the mean value theorem there exists $\xi\in\supp(\omega_j)$ 
such that
\begin{equation}
  \label{fprimelb}
  |{\bf f}'(\mathcal{N}_{j-1})-{\bf f}'(x_{j}^{+}(\tfrac{\ell}{r}))| = |{\bf f}''(\xi)|| \mathcal{N}_{j-1}-x_{j}^{+}(\tfrac{\ell}{r})|
  \gg \frac{t}{M^2} | \mathcal{N}_{j-1}-x_{j}^{+}(\tfrac{\ell}{r})|.
\end{equation}
Similarly there exists $\xi'\in\supp(\omega_j)$ such that
$$
{\bf f}'(\mathcal{N}_{j-1}) ={\bf f}'(h(\rho_{j-1})) + O({\bf f}''(\xi' )) =\rho_{j-1} + O \Big( \frac{t}{M^2} \Big),
$$ 
by \eqref{fhidentity}. By \eqref{stationary}
$
  {\bf f}'(x_{j}^{+}(\tfrac{\ell}{r})) = -\frac{t}{2 \pi x_{j}^{+}(\tfrac{\ell}{r})  } = \alpha_j \pm 
    \Big(\frac{\ell/r}{ x_{j}^{+}(\tfrac{\ell}{r})} \Big)^{\frac{1}{2}}  \frac{1}{q_j}$.
Using \eqref{fareydiff} it follows that
\begin{equation}
  \label{fprimeub}
  |{\bf f}'(\mathcal{N}_{j-1}) -{\bf f}'(x_{j}^{+}(\tfrac{\ell}{r}))| \ll \frac{1}{v_j R} + \frac{t}{M^2} +  \Big(\frac{\ell/r}{ M} \Big)^{\frac{1}{2}}  \frac{1}{q_j}
  \ll  \frac{1}{v_jR}   + \frac{t}{M^2}, 
\end{equation}
since $\frac{\ell}{r} \le \frac{CM}{d^2 R^2}$.  Combining \eqref{fprimelb} and \eqref{fprimeub} yields
$| \mathcal{N}_{j-1}-x_{j}^{+}(\tfrac{\ell}{r})| \ll \frac{M^2}{t R} + 1$
and thus 
$$
 x_{j}^{+}(\tfrac{\ell}{r})- \mathcal{N}_{j-1} +H \le O\Big( \frac{M^2}{t R} + 1  \Big)+\frac{M^2}{R^2 t} 
 \le \frac{M}{4},
$$
assuming that $t_0$ is sufficiently large.
Hence $\alpha\le\mathcal{N}_{j-1}-H$, and an analogous
argument establishes that $\mathcal{N}_{j}+H\le\beta$. 
The stationary point of $G=\phi_{+}$ is $x_{j}^{+}(\tfrac{\ell}{r})$.
Hence the main term in Lemma \ref{stationaryphase} is
$$
   \frac{x_{j}^{+}(\tfrac{\ell}{r})^{-\frac{1}{4}}
	 \omega_j(x_{j}^{+}(\tfrac{\ell}{r}))e(\phi_{+}(x_{j}^{+}(\tfrac{\ell}{r}))+\frac{1}{8})}{\sqrt{
	 \phi_{+}^{''}(x_{j}^{+}(\tfrac{\ell}{r}) ) }},
$$
and since $\gamma-\alpha=\beta-\gamma=\frac{M}{4}$, the error term is 
$$
  \ll 
   \frac{M^4 M^{-\frac{1}{4}}}{t^{2}}\Big( 1+ \frac{M}{H} \Big)^2
  M^{-3}  
    +  \frac{M M^{-\frac{1}{4}}}{t^{\frac{3}{2}}} \Big( 1+ \frac{M}{H} \Big)^2
    \ll  M^{-\frac{1}{4}}  \frac{M^3}{t^{\frac{3}{2}} H^2},
$$
as the second error term dominates the first and  $H \le M$.  A similar argument establishes the analogous result for $I_{j}^{-}(\ell)$.  
Thus,
$$
   I_{j}^{\pm}(\ell) =  \frac{x_{j}^{\pm}(\tfrac{\ell}{r})^{-\frac{1}{4}} \omega_j(x_{j}^{\pm}(\tfrac{\ell}{r}))e(\phi_{\pm}(x_{j}^{\pm}(\tfrac{\ell}{r}))+\frac{1}{8})}{\sqrt{ \phi_{\pm}^{''}(x_{j}^{\pm}(\tfrac{\ell}{r}) ) }}  
  + O\Big( \frac{M^{\frac{11}{4}}}{t^{\frac{3}{2}} H^2} \Big)
  \text{ for } \ell \le rK_1d^{-2}.
$$
The error term, when inserted into \eqref{S0truncK2}, becomes
\begin{align*}
  \cE_3(j) \ll \frac{r^{\frac{1}{4}}}{q_{j}^{\frac{1}{2}}}
  \sum_{\ell\le rK_1 d^{-2}}  
  \frac{|\lambda_{\bar{f}^{\chi}}(\ell)|  M^{\frac{11}{4}}}{\ell^{\frac{1}{4}} H^2 t^{\frac{3}{2}}}
   & \ll_{k,N}\frac{r^{\frac{1}{4}}M^{\frac{11}{4}}}{
	 q_{j}^{\frac{1}{2}} H^2 t^{\frac{3}{2}} }  \Big(\frac{rM}{R^2 d^2}
	 \Big)^{\frac{3}{4}}(\log M_0)^{-\delta},
\end{align*}
by Lemma~\ref{l:L1average}(iv) and using $K_1 \asymp \frac{M}{R^2}$.  
It follows that
$$
    \cE_3(j) \ll_{k,N} \frac{M^{\frac{7}{2}}(\log M_0)^{-\delta}}{q_{j}^{\frac{1}{2}} H^2
		t^{\frac{3}{2}} R^{\frac{3}{2}}}  
    \ll_{k,N} \frac{M^{\frac{1}{2}} R^{\frac{3}{2}}}{\sqrt{Hv_j}}
		(\log M_0)^{-\delta},
$$
since $H \asymp \frac{M^2}{R^2 t}$ and $v_j=dq_j$. 
Hence we have established
$$
\mathcal{S}(j)=\mathcal{M}(j) +
O_{k,N,s}\Bigl(\bigl(\tE_1(j)+\tE_2(j)+\tE_3(j)\bigr)
(\log M_0)^{-\delta}\Bigr),
$$
where
\begin{equation}\label{Mj}
\begin{aligned}
   \mathcal{M}(j) =  
   & i^ke(\tfrac{k}{4})
   \sum_{\ell \le rK_1 d^{-2}} 
   \frac{ \lambda_{\bar{f}^{\chi}}(l) e( \tfrac{ \overline{ r a_j} l}{q_j})\omega_j(x_{j}^{+}(\tfrac{l}{r}))e(\phi_{+}(x_{j}^{+}(\tfrac{l}{r}))+\frac{1}{8})}{\sqrt{ 2 (\frac{\ell}{r})^{\frac{1}{2}}q_j} x_{j}^{+}(\tfrac{l}{r})^{\frac{1}{4}}\sqrt{ \phi_{+}^{''}(x_{j}^{+}(\tfrac{l}{r}) ) }}
    \\
    &+i^ke(-\tfrac{k}{4})  \sum_{\ell \le rK_1 d^{-2}} \frac{
		\lambda_{\bar{f}^{\chi}}(l) e( \tfrac{ \overline{ r a_j}
		l}{q_j})\omega_j(x_{j}^{-}(\tfrac{l}{r}))e(\phi_{-}(x_{j}^{-}(\tfrac{l}{r}))+\frac{1}{8})}{\sqrt{
		2 (\frac{\ell}{r})^{\frac{1}{2}}q_j}
		x_{j}^{-}(\tfrac{l}{r})^{\frac{1}{4}}\sqrt{
		\phi_{-}^{''}(x_{j}^{-}(\tfrac{l}{r}) ) }},
\end{aligned}
\end{equation}
$
     \tE_1(j) =
     \frac{HM^{\frac{2-s}{2(s-1)}}
		 R^{\frac{s-2}{2(s-1)}}}{v_{j}^{\frac{s}{2(s-1)}}}$,  
   $\tE_2(j) = ( \frac{v_j}{R} )^{\frac{3}{2}}
	 \frac{M^{\frac{3}{2}} R^2}{H^2}$, and 
    $\tE_3(j) = \frac{M^{\frac{1}{2}} R^{\frac{3}{2}}}{\sqrt{Hv_j}}$.
We now simplify the expression for $\mathcal{M}(j)$. 
By \eqref{phipmdefn}, it follows that 
$$
x^{\frac{1}{4}}  (\phi_{\pm}^{''}(x ))^{\frac{1}{2}} =
   (x^{\frac{1}{2}})^{\frac{1}{2}}
   \Big(  \frac{t}{2 \pi x^2} \pm \frac{1}{2 q_j} \sqrt{\frac{\ell}{r}} x^{-\frac{3}{2}} \Big)^{\frac{1}{2}}
   =   \Big(  \frac{t}{2 \pi x^{\frac{3}{2}}} \pm \frac{1}{2 q_j}
	 \sqrt{\frac{\ell}{r}} x^{-1} \Big)^{\frac{1}{2}},
$$
and thus
$\sqrt{ 2 (\frac{\ell}{r})^{\frac{1}{2}}q_j  }x_{j}^{\pm}(\tfrac{\ell}{r})^{\frac{1}{4}}  (\phi_{\pm}^{''}(x_j^{\pm}(\tfrac{\ell}{r}) ))^{\frac{1}{2}} =
  \Big( \frac{ \sqrt{\frac{\ell}{r}} q_j t  }{\pi x_j^{\pm}(\tfrac{\ell}{r})^{\frac{3}{2}}}  \pm \frac{\ell}{r} \frac{1}{x_j^{\pm}(\tfrac{\ell}{r})} \Big)^{\frac{1}{2}}
$.
Since $i^ke(\pm \tfrac{k}{4})=(\mp 1)^k$, the expression in \eqref{Mj} simplifies to
\begin{equation}
 \label{M0j}
  \mathcal{M}(j) =    \sum_{\pm} (\mp 1)^k \sum_{\ell=1}^{rK_1 d^{-2}}  \lambda_{\bar{f}^{\chi}}(\ell) e( \tfrac{ \overline{ r a_j} \ell}{q_j})  
    \omega_j(x_{j}^{\pm}(\tfrac{\ell}{r})) h_{j}^{\pm}(\tfrac{\ell}{r})
		e(g_{j}^{\pm}(\tfrac{\ell}{r})),
\end{equation}
where $x_{j}^{\pm}(\ell)$, $h_{j}^{\pm}(\ell)$, and $g_{j}^{\pm}(\ell)$
are given by \eqref{xjlpm}, \eqref{gjlpm}, and \eqref{hjlpm}, respectively.
Therefore,
\begin{equation}
\begin{split}
  \label{SEjsums}
  \mathcal{S}  & =  
   \sum_{j \in J(\beta,r)} c(f,r,\chi;j) \mathcal{M}(j) 
   + O_{k,N,s} \Big(  (\log M_0)^{-\delta} \sum_{j=1}^{J}
	 \big(\tE_1(j)+\tE_2(j)+\tE_3(j)\big)\Big), 
\end{split}
\end{equation}
since $\sum_{j \in J(\beta,r)}\tE_i(j)\le \sum_{j=1}^J\tE_i(j)$.  \\

\noindent {\bf Step 5}.  
In this final step, we bound the error terms in \eqref{SEjsums}.  
First, we divide the sum over $j$ into subsums where the $v_j$ lie 
in dyadic intervals $[Q,2Q]$ where $Q=2^i$, $i \ge 0$, and $Q \le R$.  
We require a bound for the number of $v_j$ in $[Q,2Q]$.  
Observe that the Farey fractions $-\frac{u_j}{v_j}$ lie in the interval $
 \mathcal{I} = [-\tfrac{t}{2 \pi(M_1+2H)},
 -\tfrac{t}{2\pi(M_2-2H)}]$ of length $|\mathcal{I}| \asymp \frac{t}{M}
 \asymp \frac{M}{HR^2}$, since $H \asymp \frac{M^2}{R^2 t}$.  
Now if $\mathcal{F}(Q)$ denotes the extended Farey fractions with denominator less than or equal to $Q$, then \cite[Lemma 1.2.3]{Huxley}
gives
\[
   \sum_{\alpha \in \mathcal{F}(Q) \cap I} 1 \le \Delta Q^2 +1,
\]
where $I$ is an interval of length $\Delta$.  Applying this estimate with $\Delta = \frac{M}{HR^2}$, we have 
\begin{equation}
    \label{vjdyadic}
    \sum_{Q \le v_j \le 2Q}  1  \le \sum_{ \alpha \in \mathcal{F}(2Q) \cap \mathcal{I} }  1
     \ll 
\frac{MQ^2}{HR^2}+1.
\end{equation}
Consequently,
\begin{align*}
  \sum_{j=1}^{J}\tE_1(j) 
  &  \ll  H\sum_{Q \le R}  
   M^{\frac{2-s}{2(s-1)}} R^{\frac{s-2}{2(s-1)}}   \Big( \sum_{Q \le v_j \le 2Q} 
 v_{j}^{-\frac{s}{2(s-1)} } \Big)   \\
 &  \ll  H\sum_{Q \le R}   \frac{M^{\frac{2-s}{2(s-1)}} R^{\frac{s-2}{2(s-1)}}}{ Q^{\frac{s}{2(s-1)}}} \Big( \frac{MQ^2}{HR^2}+1 \Big)   \\
 & =  H\sqrt{\frac{R}{M}}   \Big( \frac{M}{R} \Big)^{\frac{1}{2(s-1)}}
   \Big(  
   \frac{M}{HR^2} \sum_{Q \le R}   Q^{\frac{3}{2}-\frac{1}{2(s-1)}}
   + \sum_{Q \le R} Q^{-\frac{1}{2}-\frac{1}{2(s-1)}}
   \Big),
\end{align*}
Using the elementary estimate
\begin{equation}
     \label{Qsums}
       \sum_{\substack{Q \le R \\ Q =2^i, i \ge 0}} Q^{c_1} \ll_{c_1} 
       \begin{cases}
       R^{c_1}, & \text{ for } c_1 > 0, \\
       1, & \text{ for } c_1 < 0,
       \end{cases}
\end{equation}
with $c_1=\frac{3}{2}-\frac{s}{2(s-1)}$ and $c_1 = -\frac{1}{2}-\frac{s}{2(s-1)}$ (for $s\ge2$)  it follows that
\begin{align*}
   \sum_{j=1}^{J} \tE_1(j) &  \ll
     H\sqrt{\frac{R}{M}}   \Big( \frac{M}{R} \Big)^{\frac{1}{2(s-1)}}
   \Big(  
   \frac{M}{HR^2}    R^{\frac{3}{2}-\frac{1}{2(s-1)}}
   +1
   \Big)  \\
    & \ll \sqrt{\frac{R}{M}}   \Big( \frac{M}{R} \Big)^{\frac{1}{2(s-1)}}
   \Big(  
   \frac{M}{R^2}    R^{\frac{3}{2}-\frac{1}{2(s-1)}}
   +H
   \Big).
\end{align*}
Observe that $H\ll\frac{M^2}{R^2t}\ll\frac{M}{R}$. Since $s\ge6$, the second term in the brackets is bounded by the
first, and 
\begin{equation}
 \label{sumE1bd}
     \sum_{j=1}^{J} \tE_1(j)   \ll\sqrt{\frac{R}{M}}   \Big( \frac{M}{R} \Big)^{\frac{1}{2(s-1)}} 
   \frac{M}{R^2}    R^{\frac{3}{2}-\frac{1}{2(s-1)}}  =  \sqrt{M}  \Big( \frac{M}{R^2} \Big)^{\frac{1}{2(s-1)}}.
\end{equation}
Turning to the second error term in \eqref{SEjsums}, we have
\begin{align*}
  \sum_{j=1}^{J} \tE_2(j) 
      & \ll   \frac{M^{\frac{3}{2}} R^2}{H^2} 
      \sum_{Q \le R}
     \sum_{Q \le v_j \le 2Q}  \Big( \frac{v_j}{R} \Big)^{\frac{3}{2}} 
      \ll     \frac{M^{\frac{3}{2}} R^2}{H^2} 
     \sum_{Q \le R}
   \Big( \frac{Q}{R}  \Big)^{\frac{3}{2}}  
    \Big(
    \frac{MQ^2}{HR^2} +1
    \Big).
\end{align*}
Again applying \eqref{Qsums} with $c_1=\frac{7}{2}$ 
and $c_1=\frac{3}{2}$, we find that 
\begin{equation}
\begin{split}
   \label{sumE2bd}
    \sum_{j=1}^{J} \tE_2(j)   & \ll\frac{M^{\frac{3}{2}} R^2}{H^2} 
    \Big(
    \frac{M}{H}   
     \sum_{Q \le R}
   \Big( \frac{Q}{R}  \Big)^{\frac{7}{2}}  
    +   \sum_{Q \le R}
  \Big( \frac{Q}{R}  \Big)^{\frac{3}{2}}  
    \Big) 
    \ll     \frac{M^{\frac{3}{2}} R^2}{H^2} 
    \Big(
    \frac{M}{H}      
    + 1
    \Big)   \ll   \frac{M^{\frac{5}{2}} R^2}{H^3},
\end{split}
\end{equation}
since $H\ll\frac{M}{R}\ll M$.
The third error term in \eqref{SEjsums} is 
\begin{align*}
   \sum_{j=1}^{J} \tE_3(j) 
   \ll 
   \frac{M^{\frac{1}{2}} R^{\frac{3}{2}}}{\sqrt{H}}  
    \sum_{Q \le R}
     \sum_{Q \le v_j \le 2Q} v_{j}^{-\frac{1}{2}} 
  &  \ll 
   \frac{M^{\frac{1}{2}} R^{\frac{3}{2}}}{\sqrt{H}}  
    \sum_{Q \le R} Q^{-\frac{1}{2}}
       \Big(
    \frac{MQ^2}{HR^2} +1
    \Big),
\end{align*}
by \eqref{vjdyadic}. By \eqref{Qsums} with $c_1=\frac{3}{2}$
and $c_2=-\frac{1}{2}$, we have
\begin{equation}
\begin{split}
  \label{sumE3bd}
   \sum_{j=1}^{J} \tE_3(j)
   \ll 
   \frac{M^{\frac{1}{2}} R^{\frac{3}{2}}}{\sqrt{H}}  
   \Big(   \frac{M}{HR^2}
  R^{\frac{3}{2}}
   +  1
     \Big) 
      &= \frac{M^{\frac{3}{2}} R}{H^{\frac{3}{2}}}  \Big( 1
     + \frac{R^{\frac{1}{2}} H}{M}
      \Big)       \ll  \frac{M^{\frac{3}{2}} R}{H^{\frac{3}{2}}}  ,
\end{split}
\end{equation} 
since $\frac{R^{\frac{1}{2}} H}{M}
\ll R^{-\frac{1}{2}} \ll 1$.
Collecting the estimates in \eqref{sumE1bd}, \eqref{sumE2bd}, and
\eqref{sumE3bd}, we find that
\begin{equation}\label{sum of errors}
  \sum_{j=1}^{J}\big(\tE_1(j)+\tE_2(j) +\tE_3(j)\big) 
 \ll \sqrt{M} \Big( \frac{M}{R^2} \Big)^{\frac{1}{2(s-1)}}
 + \frac{M^{\frac{5}{2}} R^2}{H^3}  
 +\frac{M^{\frac{3}{2}} R}{H^{\frac{3}{2}}}.
\end{equation}
Note that the third error term is dominated by the first two. 
To see this, note that if $MR \ge H^{\frac{3}{2}}$, then
$\frac{M^{\frac{3}{2}} R}{H^{\frac{3}{2}}} \le
    \frac{M^{\frac{5}{2}} R^2}{H^3}$ while if $MR \le H^{\frac{3}{2}}$, then
$\frac{M^{\frac{3}{2}} R}{H^{\frac{3}{2}}}
   =\sqrt{M} \frac{MR}{H^{\frac{3}{2}}} \le \sqrt{M} \ll 
   \sqrt{M} ( \frac{M}{R^2} )^{\frac{1}{2(s-1)}}$.
Therefore the right-hand side of \eqref{sum of errors} is 
$ O ( ( \sqrt{M} ( \frac{M}{R^2} )^{\frac{1}{2(s-1)}}
 + \frac{M^{\frac{5}{2}} R^2}{H^3}) (\log M_0)^{-\delta})$.
Proposition \ref{NathanProp} now follows by combining \eqref{M0j}, \eqref{SEjsums}, and \eqref{sum of errors}.

\section{Proof of Proposition~\ref{Large Sieve}}\label{s:largesieve}
Define the functions
\[
g_{jr}^\pm(\ell) = g_j^\pm(\ell/r) + \frac{\overline{ra_j} \ell}{ q_j}, \quad h_{jr}^\pm(\ell) = \frac{\ell}{r} h_j^{\pm}(\ell/r),
 \]
\[
H_{ijr}^\pm(\ell) =  h_{ir}^{\pm}(\ell) h_{jr}^{\pm}(\ell), \quad \text{and} \quad W_{ijr}^\pm(\ell) =  \omega_i\big( x_i^\pm(\ell/r) \big) \omega_j\big( x_j^\pm(\ell/r) \big),
\]
where we recall that $g_{j}^{\pm}, h_j^{\pm}, x_{j}^{\pm}$, and $\omega_j$ are given 
by \eqref{gjlpm}, \eqref{hjlpm}, \eqref{xjlpm}, and \eqref{omegaj}, respectively. 
Note that $|W_{ijr}^\pm(\ell)|\le 1$ and, since $ x_j^\pm(\ell)$
is monotonic, $W_{ijr}^\pm(\ell)$ has bounded variation (over all of
$\R$). Applying Cauchy's inequality in the $\ell$ variable in
\eqref{large_sieve_estimate} and then expanding out the resulting square,
we have
\[
\begin{split}
\Bigg| \sum_{\ell=L_1}^{L_2}&\frac{r\lambda(\ell)}{\ell}
\sum_{ \substack{j\in J(\beta,r) \\ (u_j,v_j)\in\mathcal{R}} }
\nu(j) e\big( g_{jr}^\pm(\ell) \big) h_{jr}^\pm(\ell)   \omega_j\big( x_j^\pm(\ell/r) \big) \Bigg|^2
\\
&\le  r^2 \sum_{\ell=L_1}^{L_2}  \frac{ |\lambda(\ell)|^2}{\ell^2}
\mathop{\sum \sum}_{\substack{i,j \in J(\beta,r) \\(u_i,v_i),(u_j,v_j)\in\mathcal{R}}  } \nu(i) \overline{\nu(j)} \sum_{\ell=L_1}^{L_2} W_{ijr}^\pm(\ell) H_{ijr}^\pm(\ell) e\big( g_{ir}^\pm(\ell) - g_{jr}^\pm(\ell) \big)
\\
&\ll\frac{r^2}{L^2}
\sum_{\ell=L_1}^{L_2}|\lambda(\ell)|^2
\max_{j\in J(\beta,r)}|\nu(j)|^2\cdot
\mathop{\sum \sum}_{\substack{i,j \in J(\beta,r) \\(u_i,v_i),(u_j,v_j)\in\mathcal{R}}  } \left|\sum_{\ell=L_1}^{L_2} W_{ijr}^\pm(\ell) H_{ijr}^\pm(\ell) e\big( g_{ir}^\pm(\ell) - g_{jr}^\pm(\ell) \big) \right|.
\end{split}
\]
Let
\[
Y_{ijr}^\pm(L_1,L_2) = \big|W_{ijr}^\pm(L_1) H_{ijr}^\pm(L_1)\big| + \int_{L_1}^{L_2}\left| \frac{\mathrm{d}}{\mathrm{d}x} W_{ijr}^\pm(x) H_{ijr}^\pm(x)  \right| \mathrm{d}x.
\]
We now apply \cite[Lemma~5.1.1]{Huxley} to see that 
\[
\begin{split}
\Bigg|\sum_{\ell=L_1}^{L_2} W_{ijr}^\pm(\ell) H_{ijr}^\pm(\ell) & e\big( g_{ir}^\pm(\ell) - g_{jr}^\pm(\ell) \big) \Bigg| \ \le \ Y_{ijr}^\pm(L_1,L_2)  \max_{L_1^\prime \in [L_1,L_2]}
\left|\sum_{\ell=L_1^\prime}^{L_2} e\big( g_{ir}^\pm(\ell) - g_{jr}^\pm(\ell) \big) \right|.
\end{split}
\]

We first estimate  $Y_{ijr}^\pm(L_1,L_2)$. For any function $W$ of bounded variation, we have
\[
\int \Big|\frac{\mathrm{d}}{\mathrm{d}x} (WH) \Big| \, \mathrm{d}x \, \ll \, \max|H| + \int |H^\prime(x)| \, \mathrm{d}x. 
\]
Therefore
\[
Y_{ijr}^\pm(L_1,L_2) \ll \max_{\ell\in[L_1,L_2]} H_{ijr}^\pm(\ell) +  \int_{L_1}^{L_2} \Big|\frac{\mathrm{d}}{\mathrm{d}x} H_{ijr}^\pm(x) \Big|  \, \mathrm{d}x,
\]
since $H_{ijr}^\pm(\ell)$ is positive.
We now estimate the right-hand side of this inequality. Observe that
\[
h_{jr}^\pm(\ell) = q_j \sqrt{\frac{\ell}{r q_j^2} x_j^\pm(\ell/r) } \cdot
\left[ \frac{t}{\pi \sqrt{\frac{\ell}{r q_j^2} x_j^\pm(\ell/r) }} \pm 1
\right]^{-\frac12}.
\]
Let $y_j=\frac{d}{2r u_jq_j}$. Then 
\[
\sqrt{\frac{\ell}{r q_j^2} x_j^\pm(\ell/r) } =  \sqrt{ (\ell y_j)^2 + \frac{t \ell y_j}{\pi} } \pm  \ell y_j = \frac{\frac{t y_j}{\pi}}{\sqrt{y_j^2 + \frac{ty_j}{\pi\ell} } \mp  y_j}.
\]
From the third formula, we see that this is an increasing function of $\ell$ for $\ell > 0$ and thus $h_{jr}^\pm(\ell)$ is an increasing function of $\ell$ for $\ell > 0$, as well. Therefore
$Y_{ijr}^\pm(L_1,L_2) \ll H_{ijr}^\pm(L_2)$.
Since $R=\sqrt{M/M_0}$ and $M\ll \sqrt{C}$, by \eqref{tbound} and \eqref{Mbound} we have $M \ll tR$. This implies that 
\[
d^2 L \ll \frac{rV^2M}{R^2} \asymp \frac{r UV M^2}{tR^2} \ll rtUV
\]
so that $\frac{t}{\pi \ell y_j} \gg 1$ for all $j$. Therefore
\[
\sqrt{\frac{\ell}{r q_j^2} x_j^\pm(\ell/r) } = \sqrt{ (\ell y_j)^2 + \frac{t \ell y_j}{\pi} } \pm \ell y_j \asymp
\eta{L}
\]
and thus
\[
h_{jr}^\pm(\ell) \asymp
\frac{V}{d}\sqrt{\frac{(\eta L)^3}{t}}.
\]
This means that 
\[
Y_{ijr}^\pm(L_1,L_2) \ll
\frac{V^2}{d^2t}(\eta L)^3
=\frac{\eta VL^2}{rU}.
\]

We now estimate
\[
\Sigma_{ijr}^{\pm}(L_1,L_2) := \max_{L_1 \le L_1^\prime \le L_2}  \big| S_{ijr}^\pm(L_1^\prime,L_2)\big|
\]
where
\[
S_{ijr}^\pm(L_1^\prime,L_2) : =\sum_{\ell=L_1^\prime}^{L_2} e \left( g_i^\pm(\ell/r)-g_j^\pm(\ell/r) \right).
\]
In order to estimate the exponential sum $S_{ijr}^\pm(L_1^\prime,L_2)$,
we use van der Corput first and second derivative estimates in the form
of \cite[Lemmas~5.1.2 and 5.1.3]{Huxley}.  In particular, we need to
study the derivatives of the functions $g_{jr}^\pm(\ell)$.

\subsection{First derivative estimate}

Note that
\[
\frac{\mathrm{d}}{\mathrm{d}\ell} g_{jr}^\pm(\ell) = \frac{1}{r} (g_j^\pm)^\prime(\ell/r)  + \frac{\overline{ra_j}}{ q_j}.
\]
For the stationary point $x_j^\pm(\ell)$, we have
\[
-\frac{t}{2\pi x_j^\pm(\ell)} + \frac{u_j}{v_j} \mp \frac{1}{q_j}
\sqrt{\frac{\ell}{x_j^\pm(\ell)}}=0,
\]
so that
\[
\frac{\mathrm{d}}{\mathrm{d}\ell}  g_j^\pm(\ell) = \left\{
-\frac{t}{2\pi x_j^\pm(\ell)} + \frac{u_j}{v_j} \mp  \frac{1}{q_j}
\sqrt{\frac{\ell}{x_j^\pm(\ell)}} \right\}
\frac{\mathrm{d}x_j^\pm(\ell)}{\mathrm{d}\ell} \mp \frac{1}{q_j}
\sqrt{\frac{x_j^\pm(\ell)}{\ell}}
= \mp \frac{1}{q_j} \sqrt{\frac{x_j^\pm(\ell)}{\ell}}
\]
and
\begin{equation}\label{dg}
\frac{\mathrm{d}}{\mathrm{d}\ell} g_{jr}^\pm(\ell) = \mp \frac{1}{rq_j} \sqrt{\frac{x_j^\pm(\ell/r)}{\ell/r}} + \frac{\overline{r a_j }}{q_j}.
\end{equation}

\subsection{Second derivative estimate} We have
\[
\frac{\mathrm{d}^2}{\mathrm{d}\ell^2}  g_{jr}^\pm(\ell) =
\mp \frac{1}{rq_j} \frac{\mathrm{d}}{\mathrm{d}\ell}
\sqrt{\frac{x_j^\pm(\ell/r)}{\ell/r}}.
\]
Again writing $y_j=\frac{d}{2ru_j q_j}$, we have
\[
\frac{1}{rq_j}\sqrt{\frac{x_j^\pm(\ell/r)}{\ell/r}} =
\frac{d}{2ru_jq_j}\left(
\sqrt{1 + \frac{2tru_jq_j}{\pi d \ell} } \pm 1\right)=
y_j\left(\sqrt{1+\frac{t}{\pi y_j\ell}}\pm 1\right),
\]
and so
\[
\frac{\mathrm{d}^2}{\mathrm{d}\ell^2}  g_{jr}^\pm(\ell) =\mp \frac{1}{rq_j} \frac{\mathrm{d}}{\mathrm{d}\ell}
\sqrt{\frac{x_j^\pm(\ell/r)}{\ell/r}}
= \pm \frac{t}{2\pi \ell^2} \left( 1 + \frac{t}{\pi\ell y_j}
\right)^{\!-\frac12}.
\]
This is clearly a monotonic function of $y_j$. Therefore $(g_{ir}^\pm -
g_{jr}^\pm)^{\prime \prime}(\ell)$ is either identically zero, or it is
never zero, so  $(g_{ir}^\pm - g_{jr}^\pm)^{\prime}(\ell)$ is monotone
in $\ell$. Hence
\[
\begin{split}
\frac{\mathrm{d}^2}{\mathrm{d}\ell^2} \left[  g_{ir}^\pm(\ell) - g_{jr}^\pm(\ell) \right] &=
\pm \frac{t}{2\pi \ell^2} \left\{  \frac{1}{\sqrt{1+\frac{t}{\pi \ell y_i}}} - \frac{1}{\sqrt{1+\frac{t}{\pi \ell y_j} } } \right\}
\\
&= \pm \frac{t}{2\pi \ell^2} \left\{  \frac{\frac{t}{\pi \ell y_j} - \frac{t}{\pi \ell y_i} }{\sqrt{1+\frac{t}{\pi \ell y_i}}\sqrt{1+\frac{t}{\pi \ell y_j} } \left(\sqrt{1+\frac{t}{\pi \ell y_i}}+\sqrt{1+\frac{t}{\pi \ell y_j} } \, \right) } \right\}
\\
&= \pm \frac{r t^2}{\pi^2d\ell^3} \left\{  \frac{ u_j q_j - u_i q_i }{\sqrt{1+\frac{t}{\pi \ell y_i}}\sqrt{1+\frac{t}{\pi \ell y_j} } \left(\sqrt{1+\frac{t}{\pi \ell y_i}}+\sqrt{1+\frac{t}{\pi \ell y_j} } \, \right) } \right\}.
\end{split}
\]
As shown above, we have $\frac{t}{\pi \ell y_j} \gg 1$, so that
\[
\left| \frac{\mathrm{d}^2}{\mathrm{d}\ell^2} \left[  g_{ir}^\pm(\ell) - g_{jr}^\pm(\ell) \right] \right|
\asymp  \frac{rt^2}{dL^3} \frac{ \big| u_iq_i - u_jq_j
\big|}{(\frac{rtUV}{d^2L})^{\frac32}}
\asymp\frac{\eta}{L}\frac{|u_iv_i - u_jv_j|}{UV}.
\]

\subsection{Applying van der Corput estimates}
Our strategy is to use both van der Corput first and second
derivative estimates. If neither of these estimates is small, then
this implies constraints on the sizes of $|u_iv_i-u_jv_j|$ and
$\|\frac{\overline{ra_i}}{q_i}-\frac{\overline{ra_j}}{q_j}\|$. This leads
to the counting problem given in Proposition \ref{Large Sieve}. To this
end, define
\[
\begin{split}
N(X) &=\#\{(i,j)\in J(\beta,r)^2: (u_i,v_i),(u_j,v_j) \in \mathcal{R} \text{ and } \Sigma_{ijr}^{\pm}(L_1,L_2) \ge X\}.
\end{split}
\]
Trivially $\Sigma_{ijr}^{\pm}(L_1,L_2) \le  L$, so we have
\begin{equation} \label{X1}
\begin{split}
\sum_{\substack{(i,j)\in J(\beta,r)^2 \\ (u_i,v_i),(u_j,v_j) \in \mathcal{R} }} \!\!\!\!\!\!\! \Sigma_{ijr}^{\pm}(L_1,L_2)&=-\int_0^L X\, \mathrm{d}N(X)
=\int_0^L N(X)\, \mathrm{d}X
\\
&\le X_1 (\#\mathcal{R})^2+\int_{X_1}^L N(X)\, \mathrm{d}X,
\end{split}
\end{equation}
where we take $X_1:= A\sqrt{L}\max(\sqrt\eta,1)$ for a sufficiently large constant $A$. 

Suppose that $\Sigma_{ijr}^{\pm}(L_1,L_2) \ge X\ge X_1$.
If $u_iv_i-u_jv_j\ne0$ then we get a 
bound for $\Sigma_{ijr}^{\pm}$ by the second derivative test.
In particular, by \cite[Lemma~5.1.3]{Huxley}, we have
\[
X \le \Sigma_{ijr}^{\pm}(L_1,L_2) \ll
L\sqrt{\lambda_2} + \frac{1}{\sqrt{\lambda_2}} \ll
\sqrt{\eta L} + \frac{1}{\sqrt{\lambda_2}} \le
\frac{X}{A} + \frac{1}{\sqrt{\lambda_2}},
\]
where $\lambda_2 = \frac{\eta}{L}\frac{|u_iv_i - u_jv_j|}{UV}$.
If $A$ is large enough, then this implies that $1/\sqrt{\lambda_2} \gg X$ so therefore $\lambda_2 \ll X^{-2}.$ Thus, we have
\begin{equation}\label{upperboundestimate}
\big| u_iv_i - u_jv_j\big| \ll UV\frac{L}{\eta X^2},
\end{equation}
and obviously this holds also when $u_iv_i-u_jv_j=0$.

Next we apply a first derivative estimate. Let 
$z_{ijr}  = \frac{\overline{a_i r}}{q_i} - \frac{\overline{a_j r}}{q_j}$.
Then, with $y_j$ as above, from \eqref{dg} we derive that
\begin{equation} \label{zest}
\begin{split}
\Big|\frac{\mathrm{d}}{\mathrm{d}\ell} \big[ g_{ir}^\pm(\ell) - g_{jr}^\pm(\ell) \big] - z_{ijr}\Big| 
&= \left| \left(\sqrt{ y_i^2 + \frac{ty_i}{\pi \ell}} \pm y_i\right) - \left( \sqrt{ y_j^2 + \frac{ty_j}{\pi \ell}} \pm y_j \right) \right|
\\
&= |y_i - y_j| \left| \frac{ y_i + y_j + \frac{t}{\pi \ell} }{ \sqrt{ y_i^2 + \frac{ty_i}{\pi \ell}} + \sqrt{ y_j^2 + \frac{ty_j}{\pi \ell}} } \pm 1 \right|
\\
&\asymp |y_i - y_j| \sqrt{ \frac{rUVT}{d^2L} }\asymp\eta\frac{|u_iv_i-u_jv_j|}{UV}.
\end{split}
\end{equation}
Therefore, using the upper bound for $|u_iv_i-u_jv_j|$ in \eqref{upperboundestimate}, we have
\[
\frac{\mathrm{d}}{\mathrm{d}\ell}\left[ g_{ir}^\pm(\ell) - g_{jr}^\pm(\ell) \right] - z_{ijr}\ll \frac{L}{X^2} \le  \frac{L}{X_1^2} \ll 1.
\]
By (possibly) increasing the size of the constant $A$ in the definition
of $X_1$, we see that there exist numbers $\mu$ and $\nu$ such that
$\nu-\mu < \frac{1}{2}$ and 
$\mu \le \frac{\mathrm{d}}{\mathrm{d}\ell}\left[ g_{ir}^\pm(\ell) -
g_{jr}^\pm(\ell) \right] \le \nu$.
Now we use the truncated Poisson summation formula
\cite[Lemma~5.4.3]{Huxley}, which states that
\[
\sum_{\ell=L_1^\prime}^{L_2} e\big( g_{ir}^\pm(\ell) - g_{jr}^\pm(\ell) \big) = \sum_{\mu-\frac{1}{4}\le n \le \nu+\frac{1}{4}} \int_{L_1^\prime}^{L_2} e\big( g_{ir}^\pm(x) - g_{jr}^\pm(x) - nx \big) \, \mathrm{d}x +O(1).
\]
Note that the sum on the right-hand side contains at most one term. Hence
\[
\begin{split}
\Sigma_{ijr}^{\pm}(L_1,L_2) &\le
O(1) + \max_{L_1^\prime \in [L_1,L_2]}
\sum_{\mu-\frac{1}{4}\le n \le \nu+\frac{1}{4}}
\left|\int_{L_1^\prime}^{L_2} e\big( g_{ir}^\pm(x) - g_{jr}^\pm(x) - nx \big) \, \mathrm{d}x \right|
\\
&=O(1) + \sum_{\mu-\frac{1}{4}\le n \le \nu+\frac{1}{4}} \max_{L_1^\prime \in [L_1,L_2]} \left|  \int_{L_1^\prime}^{L_2} e\big( g_{ir}^\pm(x) - g_{jr}^\pm(x) - nx \big) \, \mathrm{d}x \right|.
\end{split}
\]
Since $z_{ijr}$ is only defined modulo $1$, we are free to shift
$g_{ir}^\pm(\ell) - g_{jr}^\pm(\ell)$ by any integer multiple of
$\ell$. Thus, if there is an integer $n \in
[\mu-\frac{1}{4},\nu+\frac{1}{4}]$, we may assume that $n=0$.
Therefore,
\[
\Sigma_{ijr}^{\pm}(L_1,L_2) \le
\max_{L_1^\prime \in [L_1,L_2]} \left| \int_{L_1^\prime}^{L_2} e\big( g_{ir}^\pm(x) - g_{jr}^\pm(x) \big) \, \mathrm{d}x \right| + O(1).
\]
Define
\[
\lambda_1 = \min_{\ell\in[L_1,L_2]} \left| \frac{\mathrm{d}}{\mathrm{d}\ell}\left[ g_{ir}^\pm(\ell) - g_{jr}^\pm(\ell) \right] \right|.
\]
Then by \cite[Lemma~5.1.2]{Huxley} we have
\[
\max_{L_1^\prime \in [L_1,L_2]} \left| \int_{L_1^\prime}^{L_2} e\big( g_{ir}^\pm(x) - g_{jr}^\pm(x) \big) \, \mathrm{d}x \right|  \ll \frac{1}{\lambda_1}.
\]
This implies that
$X \le \Sigma_{ijr}^{\pm}(L_1,L_2) \ll \frac{1}{\lambda_1} + 1$,
so that
$\lambda_1 \ll \frac{1}{X}$
if the constant $A$ is sufficiently large. Thus, by \eqref{zest}, we find that
\begin{equation}\label{zbound2}
\begin{split}
\|z_{ijr}\| &\le |z_{ijr}| = \lambda_1 +
O\!\left(\eta\frac{|u_iv_i - u_jv_j|}{UV}\right)
\ll \frac{1}{X} + \frac{L}{X^2} \ll \frac{L}{X^2},
\end{split}
\end{equation}
since $X \le L$.

In summary, we have found that
$\Sigma_{ijr}^\pm(L_1,L_2) \ge X \ge X_1$ implies the inequalities
\eqref{upperboundestimate} and \eqref{zbound2}. In other words,
$N(X) \le B(\Delta_1(X),\Delta_2(X))$
for certain functions $\Delta_1(X)$ and $\Delta_2(X)$ satisfying the
conditions in \eqref{DeltaConditions}. From \eqref{X1}, we derive that
\[
\sum_{\substack{(i,j)\in J(\beta,r)^2 \\ (u_i,v_i),(u_j,v_j) \in \mathcal{R} }} \!\!\!\!\!\!\! \Sigma_{ijr}^{\pm}(L_1,L_2)\le X_1 (\#\mathcal{R})^2+\int_{X_1}^L  B(\Delta_1(X)\Delta_2(X))\, \mathrm{d}X.
\]
By extending the definitions of $\Delta_1(X)$ and $\Delta_2(X)$ to be zero for $X< X_1$, we note that  the right-hand side of this expression is
\[
\ll X_0 (\#\mathcal{R})^2+\int_{X_0}^L B(\Delta_1(X),\Delta_2(X))\, \mathrm{d}X
\]
for $X_0$ defined in Proposition \ref{Large Sieve} so long as $A\ge 1$. The proposition now follows. \\

\noindent {\it Acknowledgements}.
We thank Peter Sarnak for helpful comments and the Banff International
Research Station for hosting us for a Research in Teams Meeting
(15rit201). A significant portion of this project was completed
during that week and we appreciated the excellent working conditions.
The second and third authors also thank the University of Bristol for
hosting a number of research visits.

\end{document}